\theoremstyle{plain}
\newtheorem{thm}{Theorem}[section]
\newtheorem{cor}[thm]{Corollary}
\newtheorem{pro}[thm]{Proposition}
\newtheorem{lem}[thm]{Lemma}
\newtheorem{proposition-principale}[thm]{Proposition principale}
\newtheorem{thm-principal}{Th\'eor\`eme principal}[section]
\newtheorem{mthm}{Theorem}
\theoremstyle{definition}
\newtheorem{eg}[thm]{Example}
\newtheorem{rem}[thm]{Remark}
\newtheorem{Corollary}[thm]{Corollary}
\newtheorem*{thm*}{Theorem}
\theoremstyle{definition}
\newtheorem{Definition}[thm]{Definition}
\newtheorem{Remark}[thm]{Remark}
\newtheorem{Example}[thm]{Example}
\def\C{\mathbf{C}}
\def\bfk{\mathbf{k}}
\def\R{\mathbf{R}}
\def\Q{\mathbf{Q}}
\def\Z{\mathbf{Z}}
\def\N{\mathbf{N}}
\def\bfx{{\mathbf{x}}}
\def\bfy{{\mathbf{y}}}
\def\bbP{\mathbb{P}}
\def\bbA{\mathbb{A}}
\def\Aut{{\sf{Aut}}}
\def\Bir{{\sf{Bir}}}
\def\PGL{{\sf{PGL}}\,}
\def\GL{{\sf{GL}}\,}
\numberwithin{equation}{section}       
  \newcommand{\serge}[1]{{\color{red}*}\marginpar{\tiny  \color{red} SC: #1}}
\begin{document}

\setlength{\baselineskip}{0.54cm}        
%
%
\title[Algebraic growth of the Cremona group]
{Algebraic growth of the Cremona group}
\date{2024}

\author[Alberto Calabri]{Alberto Calabri}
\address{\sc Alberto Calabri\\ Dipartimento di Matematica e Informatica, Universit\`a di Ferrara, Via Machiavelli 30, 44121 Ferrara, Italy}
\email{clblrt@unife.it}

\author[Serge Cantat]{Serge Cantat}
\address{\sc Serge Cantat\\
IRMAR (UMR 6625 du CNRS), Universit{\'e} de Rennes 1, Campus de Beaulieu, 35042 Rennes cedex, France} 
\email{serge.cantat@univ-rennes1.fr}

\author[Alex Massarenti]{Alex Massarenti}
\address{\sc Alex Massarenti\\ Dipartimento di Matematica e Informatica, Universit\`a di Ferrara, Via Machiavelli 30, 44121 Ferrara, Italy}
\email{msslxa@unife.it}

\author[Fran\c cois Maucourant]{Fran\c cois Maucourant}
\address{\sc Fran\c cois Maucourant\\ IRMAR (UMR 6625 du CNRS), Universit{\'e} de Rennes 1, Campus de Beaulieu, 35042 Rennes cedex, France}
\email{francois.maucourant@univ-rennes1.fr}

\author[Massimiliano Mella]{Massimiliano Mella}
\address{\sc Massimiliano Mella\\ Dipartimento di Matematica e Informatica, Universit\`a di Ferrara, Via Machiavelli 30, 44121 Ferrara, Italy}
\email{mll@unife.it}

%
%

%
%

%
%

\date{\today}
\subjclass[2020]{Primary 14E07, 14E05; Secondary 14L35, 14L40.}
\keywords{Cremona group, birational automorphisms, birational maps.}

\begin{abstract} 
We initiate the study of the ``{\emph{algebraic growth}}'' of groups of automorphisms and birational transformations of algebraic varieties.
Our main result concerns $\Bir(\bbP^2)$, the Cremona group in $2$ variables. This group is the union, for all degrees $d\geq 1$, of the algebraic variety $\Bir(\bbP^2)_d$ of birational transformations of the plane of degree $d$. Let $N_d$ denote the number of irreducible components of $\Bir(\bbP^2)_d$. We describe the asymptotic growth of $N_d$ as $d$ goes to $+\infty$, showing that there are two constants $A$ and $B>0$ such that 
$$
A\sqrt{\ln(d)} \leq \ln \left(\ln \left(\sum_{e\leq d} N_e \right) \right) \leq B \sqrt{\ln(d)}
$$
for all large enough degrees $d$. This growth type seems quite unusual and shows that computing the algebraic growth of $\Bir(\bbP^2)$ is a challenging problem in general.   
\end{abstract}

\maketitle
 %
%

\setcounter{tocdepth}{1}
\tableofcontents 
 
\vfill
\pagebreak

\section{Introduction}

\subsection{Automorphisms of the affine plane} Consider the group $\Aut(\bbA^2_\bfk)$ of automorphisms of the affine plane $\bbA^2_\bfk$, over some field $\bfk$. For each degree $d\geq 1$, Friedland and Milnor proved in \cite{friedland-milnor} that the automorphisms of degree $d$ form an algebraic variety $\Aut(\bbA^2_\bfk)_d$  and   the number of  
 irreducible components of $\Aut(\bbA^2_\bfk)_d$ is equal to $K_d$,
 where $K$ is the so called Kalm\'ar's function; that is, $K_d$ is the
 number of {\emph{ordered factorizations $d=d_1\cdot d_2 \cdots d_s$ 
 in integers $d_i\geq 2$}}; that is, two factorizations are considered identical if they contain the same integers $d_i$ written in the same order. For instance $K_6=3$ because $6=2\cdot 3=3\cdot 2 = 6$. Then, using the results of Del\'eglise, Hernane, and Nicolas described in~\cite{deleglise-hernane-nicolas}, we obtain the following theorem. To state it, we denote by  $\zeta(s)=\sum_{n\geq 1} n^{-s}$  the Riemann zeta function and by $\rho\simeq 1.728$ the real number  defined by $\zeta(\rho)=2$.
 
\begin{mthm}\label{ThA} The number $K_d$ of irreducible components of $\Aut(\bbA^2_\bfk)_d$ satisfies the following properties:
\begin{enumerate}
\item as $n$ goes to $\infty$, 
$$\sum_{d=1}^n K_d \simeq a  n^{\rho}
$$ 
with $\zeta(\rho)=2$ and $a= \vert \rho \zeta'(\rho)\vert^{-1}$;
\item $K_n\geq 1$ for all $n$ and $K_p=1$ if and only if  $p$ is prime;  in particular 
$$\liminf_{d} \log(K_d)=0;$$
\item $\limsup_d\log(K_d)/\log(d)=\rho$.
\end{enumerate} 
\end{mthm}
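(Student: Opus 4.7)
The plan is to combine the Friedland--Milnor description of $\Aut(\bbA^2_\bfk)_d$, already recorded in the excerpt, with the arithmetic analysis of Kalm\'ar's function carried out in~\cite{deleglise-hernane-nicolas}.

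First, I would start from the basic recursion $K_n = \sum_{d \mid n,\, d > 1} K_{n/d}$ for $n \geq 2$ (with the convention $K_1 = 1$), obtained by isolating the first factor of an ordered factorization. A direct manipulation of Dirichlet series then gives the identity
$$
F(s) \;:=\; \sum_{n \geq 1} \frac{K_n}{n^s} \;=\; \frac{1}{2 - \zeta(s)}.
$$
Since $\zeta$ is continuous and strictly decreasing from $+\infty$ to $1$ on $(1,+\infty)$, the equation $\zeta(s)=2$ has a unique real solution $\rho \in (1,+\infty)$, and $F$ has a simple pole there with residue $-1/\zeta'(\rho)>0$. A Wiener--Ikehara Tauberian theorem applied to $F$, after checking that it extends continuously to the line $\mathrm{Re}(s)=\rho$ off the pole (non-vanishing of $\zeta(s)-2$ elsewhere on this line), then yields assertion~(1):
$$
\sum_{d \leq n} K_d \;\sim\; \frac{1}{\rho\,|\zeta'(\rho)|}\, n^\rho.
$$

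Part~(2) is elementary. The trivial factorization $n = n$ shows $K_n \geq 1$ for $n \geq 2$, while $K_1 = 1$ by convention. A prime $p$ has no non-trivial factorization, so $K_p = 1$; conversely, a composite $n = ab$ with $a,b \geq 2$ admits at least the two distinct factorizations $n$ and $a\cdot b$, so $K_n \geq 2$. Taking $d$ to run through the primes then forces $\liminf \log K_d = 0$. For part~(3), the upper bound is immediate by induction on $n$: assuming $K_m \leq m^\rho$ for every $m<n$, the recursion gives
$$
K_n \;\leq\; n^\rho \sum_{d \mid n,\, d > 1} d^{-\rho} \;\leq\; n^\rho\bigl(\zeta(\rho) - 1\bigr) \;=\; n^\rho,
$$
hence $\limsup \log K_n / \log n \leq \rho$. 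The matching lower bound requires producing arbitrarily large $n$ with $K_n \geq n^{\rho - \epsilon}$ for every $\epsilon > 0$; this is the non-trivial half, and it is what~\cite{deleglise-hernane-nicolas} supplies by constructing ``factorizationally extremal'' integers on which the inductive estimate above is nearly saturated.

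The main obstacle is precisely this last point. It cannot be deduced from the summatory asymptotic of~(1) alone: from $A(n) \sim a n^\rho$ one can only extract the average $A(n)/n \sim a n^{\rho-1}$, and the trivial pigeon-hole bound $\max_{m \leq n} K_m \geq A(n)/n$ yields just $\limsup \log K_n / \log n \geq \rho - 1$. Genuine pointwise lower bounds on $K_n$ for well-chosen $n$ are needed, and this is the technical heart of~\cite{deleglise-hernane-nicolas} that I would invoke in order to close the gap and obtain $\limsup = \rho$.
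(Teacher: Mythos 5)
Your proposal is correct and matches what the paper does: the paper does not prove Theorem~A directly, but reduces it via the Friedland--Milnor identification to the arithmetic of Kalm\'ar's function $K_d$ and then cites Del\'eglise--Hernane--Nicolas. Your sketch — the recursion, the Dirichlet series identity $\sum K_n n^{-s} = (2-\zeta(s))^{-1}$, the Tauberian argument giving the summatory asymptotic in~(1), the inductive bound $K_n \leq n^\rho$ for the upper half of~(3), and the accurate observation that the matching $\limsup$ lower bound cannot be extracted from the average and must come from the extremal constructions of the cited reference — faithfully unpacks exactly what that citation supplies.
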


Thus, we see that $K_d$ oscillates, with minimal values equal to $1$ and maximal values growing polynomially, like $c^{st} d^\rho$. 

\subsection{The Cremona group} Our goal is to study a similar question for birational transformations of the plane $\bbP^2_\bfk$, over an algebraically closed field $\bfk$. As will be explained in Section~\ref{Bird}, the birational transformations of $\bbP^2_\bfk$ of degree $d$ form an algebraic variety $\Bir(\bbP^2_\bfk)_d$, and we shall denote by $N_d$ the number of its irreducible components. The main result of this article is the following theorem. 

\begin{mthm}\label{ThB} 
The number $N_d$ of irreducible components of $\Bir(\bbP^2_\bfk)_d$ satisfies 
$$ 
0.832\simeq\sqrt{\ln(2)} \leq \limsup_{d\to +\infty}\frac{\ln(\ln(N_d))}{\sqrt{\ln(d)}}\leq 2 \sqrt{\ln(2)} \simeq 1.665.
$$
For every $\epsilon>0$, there is an integer $D(\epsilon)$ such that the sum $N_{\leq d}:=\sum_{d'=1}^d  N_{d'}$ satisfies 
 $$ 
 \sqrt{\ln(2)} -\epsilon \leq \frac{\ln (\ln (N_{\leq d}))}{\sqrt{\ln d}} \leq 2\sqrt{\ln 2} +\epsilon
 $$
 for all $d\geq D(\epsilon)$. \end{mthm}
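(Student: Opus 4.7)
My plan is to reduce the counting of irreducible components to a purely combinatorial count of ``characteristics'' (weighted proximity trees of base clusters), and then to estimate the resulting number by partition-type arguments. I would first show that each irreducible component $Z \subset \Bir(\bbP^2_\bfk)_d$ is determined by the isomorphism class of the cluster of (possibly infinitely near) base points of a generic $f \in Z$, weighted by the multiplicities $m_1, \dots, m_r$ of the corresponding homaloidal linear system. A constructibility/semi-continuity argument makes the combinatorial datum constant on $Z$, and a dimension count shows different components yield different data. The Noether equalities $\sum m_i = 3(d-1)$ and $\sum m_i^2 = d^2-1$, together with proximity inequalities at each vertex, cut out the ambient combinatorial set in which the count takes place.

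For the upper bound, I would enumerate abstract characteristics by stratifying according to the multiset of distinct multiplicity values that appear. The joint Noether system is much more restrictive than either equation alone, and balancing cumulative counts against the fixed squared sum $d^2-1$ reduces the enumeration to a weighted partition problem whose saddle point is attained near multiplicities of dyadic size $m \asymp 2^j$. This is where the constant $2\sqrt{\ln 2}$ emerges. The extra choice of arranging infinitely near structure over a given multiplicity sequence is bounded by a tree-counting estimate and absorbed in the $o(\sqrt{\ln d})$ term.

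For the lower bound, I would construct an explicit family of pairwise inequivalent characteristics realised by actual birational maps. The rough idea is to take a set $S = \{s_1, \dots, s_t\}$ of small positive integers with $\prod_i s_i \leq d$ and $t \asymp \sqrt{\log_2 d}$, and for each subset $J \subseteq 2^S$ of ``activation patterns'' to assemble a birational map whose characteristic records $J$. Natural building blocks are de Jonqui\`eres maps and quadratic Cremona involutions, composed along disjoint base structures so that their cluster contributions remain separated. This provides at least $2^{2^t} \geq \exp\exp\bigl((\sqrt{\ln 2} - o(1))\sqrt{\ln d}\bigr)$ components. The passage from $N_d$ to $N_{\leq d}$ is then routine: upward, $N_{\leq d} \leq d \cdot \max_{e \leq d} N_e$, and the $\ln d$ factor is absorbed by the double logarithm; downward, the construction produces enough components at degrees dense enough below $d$ that $N_{\leq d}$ inherits the lower envelope for all large $d$.

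The main obstacle is the lower bound. Listing abstract characteristics satisfying the Noether conditions is easy; showing that a specific family of $2^{2^t}$ of them is all realised by genuine birational transformations, and producing pairwise inequivalent components, requires a careful explicit geometric construction that matches the exact constant $\sqrt{\ln 2}$. A secondary difficulty is to guarantee that different activation patterns give genuinely non-equivalent base clusters (no Cremona relabeling between them), which forces a bookkeeping of the full proximity data rather than merely the multiplicity vector.
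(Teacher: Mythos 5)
Your reduction to counting homaloidal types (equivalently, weighted clusters of base points with proximity structure) is the right first move and matches the paper. Your lower bound is also in the correct spirit: the paper likewise constructs an explicit family by splitting blocks of a de Jonqui\`eres type through quadratic Cremona transforms, tracking how the number of distinct multiplicity values (the ``seedbed'' $s(\bfx)$) doubles while the degree multiplies by roughly $2^{N}$ at each stage, and then shows that a single type of seedbed $s$ has $2^{s}$ pairwise distinct descendants of controlled degree. Your sketch is vaguer than that but points in the same direction.

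The genuine gap is in your upper bound. You propose to treat the enumeration as a weighted partition problem cut out by the two Noether equalities and the proximity inequalities, and locate a saddle point at dyadic multiplicities. This cannot give the bound $\exp\exp\bigl((2\sqrt{\ln 2}+o(1))\sqrt{\ln d}\bigr)$: the number of nonnegative integer solutions to the Noether system alone already grows like $\exp(c\sqrt{d})$, which is $\exp\exp\bigl(\tfrac12\ln d + O(1)\bigr)$ --- vastly larger than the target, and the paper points this out explicitly. What is missing from your approach is the recursive constraint coming from Hudson's algorithm: every homaloidal type sits in a tree with root $(1;0)$, where passing to the parent is a quadratic transform, and the paper's whole upper-bound argument is a dynamical analysis of lineages in that tree. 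The decisive fact is that along a maximal run of ``$*$-births'' (seeds containing the largest multiplicity, which are forced whenever $m_1/d>5/6$), the seedbed can multiply by at most $4$ while the degree increases roughly linearly in the length of the run; combined with additive seedbed control in the small- and average-multiplicity regimes and a telescoping inequality $(\ln s)^2 \lesssim \alpha^2 \ln d$, this is exactly what produces the constant $2\sqrt{\ln 2}=\sqrt{2\ln 4}$. A static partition-type or saddle-point count that never exploits this tree recursion has no mechanism to beat the $\exp(c\sqrt d)$ ceiling, so this step of your proposal would fail as stated.
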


In particular, the growth is subexponential but faster than any polynomial function of~$d$, so we observe a phenomenon of intermediate asymptotic growth.

\subsection{} We computed the values of $N_d$ for $d\in [1,249]$. The first ones are given by 
\begin{equation*}
\begin{array}{c|c|c|c|c|c|c|c|c|c|clclclclclclclclc}
d &   1 & 2 & 3 & 4 & 5 & 6 & 7 & 8 & 9 & 10  & 11 & 12 & 13 & 14 & 15 & 16 & 17 & 18  \\ 
\hline
N_d &  1 & 1 & 1 & 2 & 5 &  4 &  5 & 9 & 10  & 17  & 19 & 29 & 34 & 51 & 63 & 88 & 102 & 152 
\end{array}
\end{equation*}
and to get a rough idea of the growth rate one can consider the following list:
$$
\begin{array}{c|c|c|c|c|c|c|c|c|c|c}
d &    50 & 100 & 125 & 150 & 200 & 249 \\ 
\hline
N_d &   52683 & 9733297 & 59637891 & 287772117 &  3585742777 &  25275093795 
\end{array} 
$$
Now, set $c(d) = \ln(\ln(N_d))/\sqrt{\ln(d)}$. In the  picture below, the black curve is made of the points $\left(d,c(d)\right)$ for $d\in [5,249]$. 
The red segments are at height $\sqrt{\ln(2)}$ and $2\sqrt{\ln(2)}$. As we can see $c(d)$ lies in between $\sqrt{\ln(2)}$ and $2\sqrt{\ln(2)}$ when  $16\leq d\leq 249$; and for $d$ large,
$c(d)\simeq 1.354$ is closer to the upper bound. 
$$
\!\!\!\!\!\!\!\!\begin{tikzpicture}
\draw[thin,->] (0,0) -- (13,0) node[right] {$d$};
\draw[thin,->] (0,0) -- (0,2) node[above] {$c(d)$};

\draw[domain=0:13, smooth, variable=\x, red] plot ({\x}, {0.832554611157697756353164644895});

\draw[domain=0:13, smooth, variable=\x, red] plot ({\x}, {1.66510922231539551270632928979});

\foreach \point in {( 0.250000000000000000000000000000, 0.0741330135179314042510133819544 ),
    ( 0.300000000000000000000000000000, 0.244018009875873667193196270177 ),
    ( 0.350000000000000000000000000000, 0.341146264439434424621359494948 ),
    ( 0.400000000000000000000000000000, 0.545894820333207989126776284434 ),
    ( 0.450000000000000000000000000000, 0.562659585101620244541791715281 ),
    ( 0.500000000000000000000000000000, 0.686300847406593547371546647875 ),
    ( 0.550000000000000000000000000000, 0.697390126952078290683837266745 ),
    ( 0.600000000000000000000000000000, 0.770199081373423281300529488815 ),
    ( 0.650000000000000000000000000000, 0.786906141556748942685311960414 ),
    ( 0.700000000000000000000000000000, 0.842775653737346011692346490513 ),
    ( 0.750000000000000000000000000000, 0.863781805010748655482688333137 ),
    ( 0.800000000000000000000000000000, 0.900258306495017828683070625220 ),
    ( 0.850000000000000000000000000000, 0.909848313174003243730162109129 ),
    ( 0.900000000000000000000000000000, 0.949469902925946928138663551194 ),
    ( 0.950000000000000000000000000000, 0.954217375762054141181522376787 ),
    ( 1.00000000000000000000000000000, 0.983774358656829345882327375753 ),
    ( 1.05000000000000000000000000000, 0.992687085676186017852397909382 ),
    ( 1.10000000000000000000000000000, 1.01342438171825204399672872715 ),
    ( 1.15000000000000000000000000000, 1.01956077531307299864675023734 ),
    ( 1.20000000000000000000000000000, 1.04345438749958292383356061628 ),
    ( 1.25000000000000000000000000000, 1.04455689536209407186352424262 ),
    ( 1.30000000000000000000000000000, 1.06326808836763217775597738558 ),
    ( 1.35000000000000000000000000000, 1.07066061758833393616741670051 ),
    ( 1.40000000000000000000000000000, 1.08451598827447132070872131099 ),
    ( 1.45000000000000000000000000000, 1.08686623761655886326780831737 ),
    ( 1.50000000000000000000000000000, 1.10342656902489200555176575813 ),
    ( 1.55000000000000000000000000000, 1.10411474163002087134473391007 ),
    ( 1.60000000000000000000000000000, 1.11692255872896492029950433366 ),
    ( 1.65000000000000000000000000000, 1.12138097456187570145391702212 ),
    ( 1.70000000000000000000000000000, 1.13092451550933074881957849379 ),
    ( 1.75000000000000000000000000000, 1.13371940681369704280352414052 ),
    ( 1.80000000000000000000000000000, 1.14532075110948012500293594163 ),
    ( 1.85000000000000000000000000000, 1.14567944909178899411395854354 ),
    ( 1.90000000000000000000000000000, 1.15512511936542961677008604099 ),
    ( 1.95000000000000000000000000000, 1.15846419181131085885957683150 ),
    ( 2.00000000000000000000000000000, 1.16572669006844565034233014871 ),
    ( 2.05000000000000000000000000000, 1.16720592459783235389212052922 ),
    ( 2.10000000000000000000000000000, 1.17617828449248232718825752719 ),
    ( 2.15000000000000000000000000000, 1.17683552127628967365109976062 ),
    ( 2.20000000000000000000000000000, 1.18390023278889533540785974971 ),
    ( 2.25000000000000000000000000000, 1.18621817283552595355384350699 ),
    ( 2.30000000000000000000000000000, 1.19202021153246114835447332527 ),
    ( 2.35000000000000000000000000000, 1.19332604764296498715221570199 ),
    ( 2.40000000000000000000000000000, 1.20030550207426539127659034817 ),
    ( 2.45000000000000000000000000000, 1.20053887669903413437101720221 ),
    ( 2.50000000000000000000000000000, 1.20643867150376061787332778055 ),
    ( 2.55000000000000000000000000000, 1.20817670085176171910805394579 ),
    ( 2.60000000000000000000000000000, 1.21280457056890387089258654340 ),
    ( 2.65000000000000000000000000000, 1.21381704534180209910839561654 ),
    ( 2.70000000000000000000000000000, 1.21944255569614953844517649222 ),
    ( 2.75000000000000000000000000000, 1.21985726107598302862103847346 ),
    ( 2.80000000000000000000000000000, 1.22446765893594483083216144981 ),
    ( 2.85000000000000000000000000000, 1.22574202641897481194653536475 ),
    ( 2.90000000000000000000000000000, 1.22979021980705040778888602016 ),
    ( 2.95000000000000000000000000000, 1.23052801532150901843785752168 ),
    ( 3.00000000000000000000000000000, 1.23514702092224175742963134493 ),
    ( 3.05000000000000000000000000000, 1.23537870711957894891798288153 ),
    ( 3.10000000000000000000000000000, 1.23938739088148960517882090306 ),
    ( 3.15000000000000000000000000000, 1.24039355501634238641180579532 ),
    ( 3.20000000000000000000000000000, 1.24378672536370707529411584348 ),
    ( 3.25000000000000000000000000000, 1.24434923095775230630135007433 ),
    ( 3.30000000000000000000000000000, 1.24822450738341644108682641961 ),
    ( 3.35000000000000000000000000000, 1.24845591979174665855689294295 ),
    ( 3.40000000000000000000000000000, 1.25184589286221837668306222913 ),
    ( 3.45000000000000000000000000000, 1.25257008936780690410523560750 ),
    ( 3.50000000000000000000000000000, 1.25561127831984611596353167852 ),
    ( 3.55000000000000000000000000000, 1.25604402708843977894638408915 ),
    ( 3.60000000000000000000000000000, 1.25929625093417659097365948668 ),
    ( 3.65000000000000000000000000000, 1.25952189508652427185268857989 ),
    ( 3.70000000000000000000000000000, 1.26248552187491602763005371539 ),
    ( 3.75000000000000000000000000000, 1.26302514804897354054512248886 ),
    ( 3.80000000000000000000000000000, 1.26563839658838527579812645304 ),
    ( 3.85000000000000000000000000000, 1.26597899746624466774680005450 ),
    ( 3.90000000000000000000000000000, 1.26888361123745909280161081942 ),
    ( 3.95000000000000000000000000000, 1.26900722060823291232284696587 ),
    ( 4.00000000000000000000000000000, 1.27160725961084183639306748707 ),
    ( 4.05000000000000000000000000000, 1.27200098545509076618486951062 ),
    ( 4.10000000000000000000000000000, 1.27439813462942649836639194708 ),
    ( 4.15000000000000000000000000000, 1.27463426181571884164794920842 ),
    ( 4.20000000000000000000000000000, 1.27711229922864775174305748168 ),
    ( 4.25000000000000000000000000000, 1.27726092701656798794736523813 ),
    ( 4.30000000000000000000000000000, 1.27957598947084611405975781638 ),
    ( 4.35000000000000000000000000000, 1.27982723775549358333746994179 ),
    ( 4.40000000000000000000000000000, 1.28198388798040283125326976320 ),
    ( 4.45000000000000000000000000000, 1.28217158192259326909928948369 ),
    ( 4.50000000000000000000000000000, 1.28441015758523884265233826234 ),
    ( 4.55000000000000000000000000000, 1.28447864434314714096508816369 ),
    ( 4.60000000000000000000000000000, 1.28653460240226340422560428646 ),
    ( 4.65000000000000000000000000000, 1.28674564712357300749037853132 ),
    ( 4.70000000000000000000000000000, 1.28870855651254582083579961075 ),
    ( 4.75000000000000000000000000000, 1.28881929671821745316848865665 ),
    ( 4.80000000000000000000000000000, 1.29078648684901715011528070473 ),
    ( 4.85000000000000000000000000000, 1.29085503147793866986719427819 ),
    ( 4.90000000000000000000000000000, 1.29276663417176972855468507378 ),
    ( 4.95000000000000000000000000000, 1.29284611076132884143923430486 ),
    ( 5.00000000000000000000000000000, 1.29464493190228359019757203711 ),
    ( 5.05000000000000000000000000000, 1.29471976608456213455002708265 ),
    ( 5.10000000000000000000000000000, 1.29651399331754449603987530776 ),
    ( 5.15000000000000000000000000000, 1.29654752618059625276695988855 ),
    ( 5.20000000000000000000000000000, 1.29825228573692462228383257933 ),
    ( 5.25000000000000000000000000000, 1.29831752076060265074800163565 ),
    ( 5.30000000000000000000000000000, 1.29998366033150597801217310115 ),
    ( 5.35000000000000000000000000000, 1.29998775760290649812942170286 ),
    ( 5.40000000000000000000000000000, 1.30161225172490865825093140604 ),
    ( 5.45000000000000000000000000000, 1.30164862536923474284710529216 ),
    ( 5.50000000000000000000000000000, 1.30323660271693988495311239672 ),
    ( 5.55000000000000000000000000000, 1.30319987880642298690362253650 ),
    ( 5.60000000000000000000000000000, 1.30473816618744702899380197681 ),
    ( 5.65000000000000000000000000000, 1.30475867551839973840790015113 ),
    ( 5.70000000000000000000000000000, 1.30624059889517648115894737433 ),
    ( 5.75000000000000000000000000000, 1.30622205472535567673043166570 ),
    ( 5.80000000000000000000000000000, 1.30767425481100409872191319152 ),
    ( 5.85000000000000000000000000000, 1.30763765251127765554726070604 ),
    ( 5.90000000000000000000000000000, 1.30909387111798916933295731775 ),
    ( 5.95000000000000000000000000000, 1.30903675858103761156431000532 ),
    ( 6.00000000000000000000000000000, 1.31039935421448287130212784440 ),
    ( 6.05000000000000000000000000000, 1.31038766036642850924973594483 ),
    ( 6.10000000000000000000000000000, 1.31174120995024847716094695428 ),
    ( 6.15000000000000000000000000000, 1.31164735594147678215144563037 ),
    ( 6.20000000000000000000000000000, 1.31299230175994411933454212730 ),
    ( 6.25000000000000000000000000000, 1.31295561435451901599739316916 ),
    ( 6.30000000000000000000000000000, 1.31421003509183717601845637495 ),
    ( 6.35000000000000000000000000000, 1.31415616041771576897695907608 ),
    ( 6.40000000000000000000000000000, 1.31542210453842961117366502342 ),
    ( 6.45000000000000000000000000000, 1.31531996915502391775442254137 ),
    ( 6.50000000000000000000000000000, 1.31659569355875319144995543428 ),
    ( 6.55000000000000000000000000000, 1.31649046605089577291465863982 ),
    ( 6.60000000000000000000000000000, 1.31765443483931398236431801181 ),
    ( 6.65000000000000000000000000000, 1.31763104198110173821541814411 ),
    ( 6.70000000000000000000000000000, 1.31881437431428893154207368587 ),
    ( 6.75000000000000000000000000000, 1.31865780030914925488326373952 ),
    ( 6.80000000000000000000000000000, 1.31985193239449206368097001618 ),
    ( 6.85000000000000000000000000000, 1.31976600051992525361353429571 ),
    ( 6.90000000000000000000000000000, 1.32086131476368401585388713446 ),
    ( 6.95000000000000000000000000000, 1.32078071681757245238712108003 ),
    ( 7.00000000000000000000000000000, 1.32189365234996259268633833107 ),
    ( 7.05000000000000000000000000000, 1.32174656042017951830562900336 ),
    ( 7.10000000000000000000000000000, 1.32287631703805792953833583633 ),
    ( 7.15000000000000000000000000000, 1.32275373572552732224809275601 ),
    ( 7.20000000000000000000000000000, 1.32377070615693335660635714765 ),
    ( 7.25000000000000000000000000000, 1.32371695231106498369064726147 ),
    ( 7.30000000000000000000000000000, 1.32475711933141346454373896241 ),
    ( 7.35000000000000000000000000000, 1.32456223931028411024681880201 ),
    ( 7.40000000000000000000000000000, 1.32564520772720552705003565934 ),
    ( 7.45000000000000000000000000000, 1.32552924664666764764101069462 ),
    ( 7.50000000000000000000000000000, 1.32648753112799900192235301498 ),
    ( 7.55000000000000000000000000000, 1.32638576933308894275959014009 ),
    ( 7.60000000000000000000000000000, 1.32737781710607697041971873995 ),
    ( 7.65000000000000000000000000000, 1.32720507106825736493101584825 ),
    ( 7.70000000000000000000000000000, 1.32822665185463785514291707476 ),
    ( 7.75000000000000000000000000000, 1.32807329670835703574600086639 ),
    ( 7.80000000000000000000000000000, 1.32896918153508296248382683903 ),
    ( 7.85000000000000000000000000000, 1.32889215470054440493522009878 ),
    ( 7.90000000000000000000000000000, 1.32983453902045149795632185670 ),
    ( 7.95000000000000000000000000000, 1.32961622682848416962769491145 ),
    ( 8.00000000000000000000000000000, 1.33059372372609923855925054279 ),
    ( 8.05000000000000000000000000000, 1.33045727409634402445496468012 ),
    ( 8.10000000000000000000000000001, 1.33130323381957599643551464702 ),
    ( 8.14999999999999999999999999999, 1.33119615091082912103259804758 ),
    ( 8.20000000000000000000000000000, 1.33208763851125492512126706850 ),
    ( 8.25000000000000000000000000000, 1.33189010636037876925143039089 ),
    ( 8.30000000000000000000000000000, 1.33281569671652794239409705264 ),
    ( 8.35000000000000000000000000001, 1.33264133496918282504051764921 ),
    ( 8.39999999999999999999999999999, 1.33345607989370838884030658438 ),
    ( 8.45000000000000000000000000000, 1.33336281256728066195766023440 ),
    ( 8.50000000000000000000000000000, 1.33421230535398232767513571323 ),
    ( 8.55000000000000000000000000000, 1.33397166784836884357361023408 ),
    ( 8.60000000000000000000000000001, 1.33486523076383676220830997395 ),
    ( 8.64999999999999999999999999999, 1.33471894313435248883546832873 ),
    ( 8.70000000000000000000000000000, 1.33547950177212396528634514376 ),
    ( 8.75000000000000000000000000000, 1.33535641894056852932259039170 ),
    ( 8.80000000000000000000000000000, 1.33616671288010712870896300685 ),
    ( 8.85000000000000000000000000001, 1.33594906207892193900026760768 ),
    ( 8.89999999999999999999999999999, 1.33680380440364156251025399235 ),
    ( 8.95000000000000000000000000000, 1.33662126431768220744467212984 ),
    ( 9.00000000000000000000000000000, 1.33735335754497065553652082709 ),
    ( 9.05000000000000000000000000000, 1.33724644281477977506161568019 ),
    ( 9.10000000000000000000000000001, 1.33801758820885261059686176287 ),
    ( 9.14999999999999999999999999999, 1.33777410237564636746882341900 ),
    ( 9.20000000000000000000000000000, 1.33859621463508763211431050705 ),
    ( 9.25000000000000000000000000000, 1.33843258666263879740073663837 ),
    ( 9.30000000000000000000000000000, 1.33912137331069065907294191266 ),
    ( 9.35000000000000000000000000001, 1.33898639113457231810105938082 ),
    ( 9.39999999999999999999999999999, 1.33973752356979663703952602247 ),
    ( 9.45000000000000000000000000000, 1.33951025400918080480611524331 ),
    ( 9.50000000000000000000000000000, 1.34029633145430157094085676778 ),
    ( 9.55000000000000000000000000000, 1.34009929734290245689901521771 ),
    ( 9.60000000000000000000000000001, 1.34076586820075575091948056969 ),
    ( 9.64999999999999999999999999999, 1.34065563294131075798074049776 ),
    ( 9.70000000000000000000000000000, 1.34136485866058897394132483578 ),
    ( 9.75000000000000000000000000000, 1.34111067541397994675814278636 ),
    ( 9.80000000000000000000000000000, 1.34186945961280500730656465613 ),
    ( 9.85000000000000000000000000001, 1.34169734118211334668927130651 ),
    ( 9.89999999999999999999999999999, 1.34233184899525160369504892889 ),
    ( 9.95000000000000000000000000000, 1.34219099070439733805331492561 ),
    ( 10.0000000000000000000000000000, 1.34288079778160195944345758786 ),
    ( 10.0500000000000000000000000000, 1.34264217855749036701715632710 ),
    ( 10.1000000000000000000000000000, 1.34336932749710671178598022536 ),
    ( 10.1500000000000000000000000000, 1.34317442492280807243766499545 ),
    ( 10.2000000000000000000000000000, 1.34378738112406484380240205686 ),
    ( 10.2500000000000000000000000000, 1.34366577257415376000903416027 ),
    ( 10.3000000000000000000000000000, 1.34431911626040698437475002484 ),
    ( 10.3500000000000000000000000000, 1.34405863520697332599865619973 ),
    ( 10.4000000000000000000000000000, 1.34477295660866311216299904675 ),
    ( 10.4500000000000000000000000000, 1.34459210656887328045460395876 ),
    ( 10.5000000000000000000000000000, 1.34517508494532382078584428066 ),
    ( 10.5500000000000000000000000000, 1.34502465591228217723739527789 ),
    ( 10.6000000000000000000000000000, 1.34566346856464221613277012881 ),
    ( 10.6500000000000000000000000000, 1.34542756085222550365162739708 ),
    ( 10.7000000000000000000000000000, 1.34610488239461232430695669782 ),
    ( 10.7500000000000000000000000000, 1.34590388261498319989033951288 ),
    ( 10.8000000000000000000000000000, 1.34646787599625414100360241450 ),
    ( 10.8500000000000000000000000000, 1.34633943563764696880284161548 ),
    ( 10.9000000000000000000000000000, 1.34695136149806701333540993302 ),
    ( 10.9500000000000000000000000000, 1.34668897903143190158114848341 ),
    ( 11.0000000000000000000000000000, 1.34735348718715096994109449735 ),
    ( 11.0500000000000000000000000000, 1.34716689792742592847433529770 ),
    ( 11.1000000000000000000000000000, 1.34770467905351514404203858264 ),
    ( 11.1500000000000000000000000000, 1.34755639241449610444356906781 ),
    ( 11.2000000000000000000000000000, 1.34815458403602351754713947138 ),
    ( 11.2500000000000000000000000000, 1.34791175270783243374965448224 ),
    ( 11.3000000000000000000000000000, 1.34854353423625851299268607368 ),
    ( 11.3500000000000000000000000000, 1.34833717048031748564220720213 ),
    ( 11.4000000000000000000000000000, 1.34886781064491866274322892532 ),
    ( 11.4500000000000000000000000000, 1.34873477165000108016742360931 ),
    ( 11.5000000000000000000000000000, 1.34930361227115452326425409185 ),
    ( 11.5500000000000000000000000000, 1.34903886482072723561173684221 ),
    ( 11.6000000000000000000000000000, 1.34966169224618968277683277399 ),
    ( 11.6500000000000000000000000000, 1.34947767387393386986656599961 ),
    ( 11.7000000000000000000000000000, 1.34997813616600265083624424730 ),
    ( 11.7500000000000000000000000000, 1.34982485999861849886359983369 ),
    ( 11.8000000000000000000000000000, 1.35038122545390116019545827832 ),
    ( 11.8500000000000000000000000000, 1.35013704319657462421868277936 ),
    ( 11.9000000000000000000000000000, 1.35073586402552192050517249714 ),
    ( 11.9500000000000000000000000000, 1.35052861799360421435708826853 ),
    ( 12.0000000000000000000000000000, 1.35102253749172402362539439453 ),
    ( 12.0500000000000000000000000000, 1.35088206639548138192216197754 ),
    ( 12.1000000000000000000000000000, 1.35141622282101049380954787258 ),
    ( 12.1500000000000000000000000000, 1.35115684240009054527011560028 ),
    ( 12.2000000000000000000000000000, 1.35174383655624045264301243658 ),
    ( 12.2500000000000000000000000000, 1.35155501693693038425118047255 ),
    ( 12.3000000000000000000000000000, 1.35202048570871007199182494277 ),
    ( 12.3500000000000000000000000000, 1.35186369279049796679275570930 ),
    ( 12.4000000000000000000000000000, 1.35239209380900999853161139740 ),
    ( 12.4500000000000000000000000000, 1.35214871399979048328914637308 )}{
    \fill \point circle (0.5pt);
}
\end{tikzpicture}
$$

\subsection{Algebraic growth type} These results are parts of a more general problem from algebraic geometry. 
For simplicity, consider a smooth projective variety $X$ defined over $\C$ and denote by $\Bir(X)$ its  group of birational transformations.
To $f\in \Bir(X)$, one associates its graph in $\Gamma(f)\subset X\times X$. Let $H$ be a polarization of $X$: it can be used to define
a notion of degree for subvarieties of $X\times X$. Now, subvarieties of $X\times X$ of dimension $\dim(X)$ and degree $\leq d$ which are graphs of birational transformations form an algebraic subset of the Hilbert scheme of $X\times X$; let $N^X_{\leq d}$ denote the number of irreducible components of this algebraic variety. By definition, the {\emph{algebraic growth type}} of $\Bir(X)$ is the asymptotic growth type of the sequence $d\mapsto N^X_{\leq d}$. A similar notion can be defined for automorphism groups of affine varieties. Theorems~\ref{ThA} and~\ref{ThB} show that this notion is interesting and non-trivial.  See Section~\ref{par:complements} for further questions

\begin{eg} Let $X$ be a projective surface. Denote by $NS(X;\Z)$ its N\'eron-Severi group. Fix a polarization of $X$ by some ample class $[H]\in NS(X;\R)$ of self intersection $1$, and define the degree of an automorphism $f$ to be the intersection product $(f^*[H]\cdot [H])\in \N$.  Then, $\Aut(X)$ acts on the N\'eron-Severi group $NS(X;\Z)$ and the kernel of this action is an algebraic group (with neutral component denoted by $\Aut(X)^0$). More generally, the index of $\Aut(X)^0$  in the subgroup $\{f\in \Aut(X)\; ; \; f^*[H]=[H]\}$ is finite; we shall denote it by $k_0$. From this, one easily derives that the number of components of $\Aut(X)$ made of automorphisms of degree $\leq d$ is equal to the product $k_0\times M_{\leq d}$ where 
$$
M_{\leq d}=\vert \{  u\in NS(X;\R)\; ; \; u\cdot [H]\leq d {\text{ and }} u\in \Aut(X)^*[H] \} \vert 
$$
Since $\Aut(X)^*$ is a subgroup of $\GL(NS(X;\Z))$ preserving the intersection form (which is of signature $(1,\rho(X)-1)$), one ends up studying a classical problem from hyperbolic geometry that is, the description of the critical exponent of a discrete group of isometries of a hyperbolic space. At the end, one gets examples with bounded, logarithmic, or polynomial growth for $M_{\leq d}$.
\end{eg}

\subsection*{Acknowledgements} Thanks to J\'er\'emy Blanc, Jean-Philippe Furter, and St\'e\-phane Lamy for discussions on irreducible components and homaloidal types. {M. Mella is a member of GNSAGA and has been supported by
  PRIN 2022 project  2022ZRRL4C
  Multilinear Algebraic Geometry.
finanziato dall'Unione Europea - Next Generation EU}. 
S. Cantat and F. Maucourant are supported by the European Research Council (ERC GOAT 101053021). 
   
\section{Preliminaries}

 \vspace{0.2cm}
\begin{center}
\begin{minipage}{12cm}
{\sl{ We introduce some vocabulary that will be used throughout this article and we describe the Hudson's test to provide a new formulation of Theorem~B in terms of homaloidal types. }}
\end{minipage}
\end{center}
\vspace{0.05cm}

\subsection{Homaloidal types} Let $f$ be  a birational transformation of the projective plane $\bbP^2_\bfk$. We can write $f[x:y:z]=[f_0(x,y,z):f_1(x,y,z):f_2(x,y,z)]$ where the $f_i$ are homogeneous polynomials of the same degree $d$ without common factor of positive degree; 
by definition, $d$ is the {\bf{degree}} of $f$. To $f$, one associates its homaloidal net: this is the linear system of curves obtained by pulling back the net of lines by $f$, their equations are 
$af_1+bf_2+cf_3=0$ with $[a:b:c]$ in $\bbP^2_\bfk$. This linear system has degree $d$. We shall denote by  $r$ the number of its base points (including infinitely near base points), by  $p_1$, $\ldots$, $p_r$ the base points, and by  $m_i$ their respective multiplicities. The {\bf{homaloidal type}} of $f$ is the list 
$$ 
\bfx(f)=(d; m_1, \ldots, m_r)
$$ 
where, by convention, the $m_i$ are organized in decreasing order $m_1\geq m_2\geq \cdots \geq m_r$. A second convention is that $r=r(f)$ will denote the number of non-zero multiplicities, but that zeros can always be added at the end of the homaloidal type. For instance, $(2; 1, 1, 1)$ and $(2; 1, 1, 1, 0, 0)$ both represent the homaloidal type of a quadratic birational transformation of the plane, and $r=3$ for such a map. 
In what follows,   we shall also denote by 
$\{ m_1, \ldots, m_r\}$ the unordered list of these numbers {\sl{repeated according to their occurrences}} (for instance $\{ 2, 3, 2\}$ is different from $\{2,3\}$ and is equal to $\{2, 2, 3\}$). 
So, when listing multiplicities $\{ m_i\}$ is not the standard notation usually used for sets.

Grouping multiplicities which are equal, we shall also write a
homaloidal type $\bfx = (d;m_1,\dots,m_r)$ as 
$$
\bfx = (d;\mu_1^{\nu_1},\dots,\mu_s^{\nu_s})
$$
where $\mu_1=m_1$ and $\nu_1$ is the number of occurrences of $m_1$ in $\bfx$, then $\mu_2$ is the largest of the $m_i<m_1$, etc. Thus, the corresponding birational map has $\nu_i$ base points of multiplicity $\mu_i$ for $i = 1,\dots,s$. The number $s = s(\bfx)  \in\mathbb{N}$ will be called the {\bf{seedbed}} of the homaloidal type $\bfx$.
 
A {\bf{block}} of $\bfx = (d;m_1,\dots,m_r)$ is a sequence of equal
multiplicities, and the {\bf{width}} of the block is the number of
elements in this sequence. 

\begin{eg}
The vector
$
\bfx = (d;d-1,1,\dots,1) = (d;d-1,1^{2d-2})
$
is the homaloidal type of de Jonqui\`{e}res maps of degree $d$. It contains two blocks, of  respective widths $1$ and  $2d-2$.
\end{eg}

\subsection{The Noether equalities and inequality} \label{par:Noether}
The homaloidal type $(d;m_1,\dots,m_r)$ of a birational map $f$ satisfies the {\bf{Noether equalities}}
\stepcounter{thm}
\begin{equation}\label{NE}
\sum_{i=1}^r m_i = 3d-3,\quad \sum_{i=1}^r m_i^2 = d^2-1.
\end{equation}
By convention, a homaloidal type will always be obtained from an element of $\Bir(\bbP^2)$(\footnote{As we shall see below, the set of all possible homaloidal types does not depend on the field of definition, provided this field be algebraically closed.}). We will say that $(d;m_1,\dots,m_r)$ is an {\bf{improper homaloidal type}} if it satisfies the Noether equalities but is not the homaloidal type of a birational self-map of~$\mathbb{P}^2$. 

The {\bf{Noether inequality}} tells us that 
\begin{equation}\label{eq:Noether_Inequality}
m_1+m_2+m_3 \geq d+1
\end{equation}
if $(d;m_1, m_2, \ldots, m_r)$ is a (possibly improper) homaloidal type 
with multiplicities $m_1\geq m_2\geq m_3\geq \cdots \geq m_r$ and degree $d\geq 2$ (see~\cite{ac:book}).
In particular, $3m_1>d$ and 
\begin{equation}\label{eq:3m1>d}
m_1 > d/3.
\end{equation}

\subsection{Hudson's test and Hudson's tree}

Let $\bfx = (d;m_1,\dots,m_r)$ be a possibly improper homaloidal type. Hudson's test is an algorithm establishing whether $\bfx = (d;m_1,\dots,m_r)$ is actually a homaloidal type. 
It runs as follows.
\begin{enumerate}

\item  replace $\bfx$ with $\bfx' = (d-\Delta;m_1-\Delta,m_2-\Delta,m_3-\Delta,m_4,\dots,m_r)$, where 
$$\Delta = m_1+m_2+m_3-d.$$ 
(Note that by the Noether inequality, $\Delta$ is positive.)
\item rearrange the multiplicities of $\bfx'$ in non increasing order; 
\item if $\bfx'$ is equal to $(1;0,  \ldots)$ then stop and conclude that $\bfx$ was a homaloidal type; if one of the multiplicities of $\bfx'$ is negative, then stop, and concludes that $\bfx$ was an improper homaloidal type; otherwise, go to step~$1$.  
\end{enumerate}

Geometrically, the first step corresponds to what happens to the homaloidal type when one composes $f$ with a  standard quadratic Cremona involution centered at the three base points of highest multiplicity.  

We now use this algorithm to organize all homaloidal types as the vertices of a tree with root $(1; 0, \ldots)$.

\subsubsection{Parents}

Given a homaloidal type 
$$\bfx=(d;m_1, \ldots,m_r)$$
of degree $d\geq 2$, its {\bf parent} - denoted by $\bfx'$ or by $p(\bfx)$ -   is the homaloidal type
$$\bfx'=(d'; m'_1, \ldots, m'_{r'})$$
obtained as follows. First, one computes 
$$ \Delta(\bfx)= m_1+m_2+m_3-d$$
to get an integer $\Delta(\bfx)\geq 1$. Then, the degree $d'$ of $\bfx'$ is given by 
$$ d'=d-\Delta(\bfx).$$
Then, the multiplicities $m'_\ell$ of $\bfx'$ are obtained in three steps from the multiplicities
of $\bfx$: first, one replaces $m_1$, $m_2$, $m_3$ respectively
$m_1- \Delta(\bfx), m_2-\Delta(\bfx), m_3-\Delta(\bfx)$
and then one reorders the set of multiplicities to list them in decreasing order. Thus, 
there are indices $i$, $j$, $k$ such that 
$$\begin{cases}
m'_i =m_1- \Delta(\bfx) =d-m_2-m_3 \\
m'_j=m_2-\Delta(\bfx)=d-m_3-m_1\\ 
m'_k=m_3-\Delta(\bfx)= d-m_1-m_2
\end{cases}$$
and as a set (with elements repeated according to their multiplicities) we have 
$$ \{m'_1, m'_2 \ldots \}= \{m_1-\Delta(\bfx), m_2-\Delta(\bfx), m_3-\Delta(\bfx), m_4, \ldots, m_r\}$$ 
Our convention is that the indices $(i,j,k)$ will always be chosen as small as possible; in particular, they are uniquely defined. Note that we always have $r'\leq r$ and $d'\leq d-1$. 

We shall also say that $\bfx$ is the {\bf child} of $\bfx'$ obtained
from the {\bf seed} $(m'_i, m'_j, m'_k)$. 

\subsubsection{The tree} Hudson's algorithm tells us that, if we draw
the graph with vertices labeled by homaloidal types and with an edge
between a type and its parent, then we get a tree, with root at $(1;
0)$. 
Going down the tree corresponds to taking sequences of children; when doing so, the degree increases strictly. Going up the tree corresponds to the Hudson's algorithm, or equivalently to computing  the  sequence of  successive parents $\bfx$, $p(\bfx)$, $p(p(\bfx)), $... This tree will be called the {\bf{Hudson tree}}.  A finite sequence $\bfx^0, \ldots,\bfx^N$ of homaloidal types such that $\bfx^n=p(\bfx^{n+1})$ for $n<N$ is called a {\bf lineage} (or an ascending lineage, to say that we go up the tree). 
  

\subsubsection{Children}
     
  Conversely, given a proper homaloidal type 
    $\bfx=(d;m_1, \ldots,m_{r}),$
  and a triple of multiplicities $(m_i,m_j,m_k)$ in non increasing
  order(\footnote{The values may be added zeroes at the end, so that
    some of the multiplicities in the triple can be equal to $0$.}),
  {\sl{with minimal possible indexes}},  one can consider the
  homaloidal type
  $$T_{i,j,k}\bfx:=(d+\nabla(\bfx);\{m_i+\nabla(\bfx),m_j+\nabla(\bfx),m_k+\nabla(\bfx), (m_t)_{t\neq,i,j,k} \} ),$$
  where we ask  the following number to be positive:
\begin{equation} \label{eq:nabla}
  \nabla(\bfx)=d-(m_i+m_j+m_k) \geq 1.
\end{equation}  
We shall also denote this number by $\nabla_{i,j,k}(\bfx)$ to make the triple precise. 

   A triple $(m_i,m_j,m_k)$ is said to be {\bf{admissible}} if it is a seed, i.e.\ if 
   $T_{i,j,k}\bfx$ is a child of $\bfx$. It is equivalent to ask that $(m_i+\nabla(\bfx),m_j+\nabla(\bfx),m_k+\nabla(\bfx))$ are the three largest multiplicities of the set $\{m_i+\nabla(\bfx),m_j+\nabla(\bfx), m_k+\nabla(\bfx), (m_t)_{t\neq i,j,k}\}$. 
   Since $m_i\geq m_j \geq m_k$, this is equivalent to ask that 
  $$\forall t\neq i,j,k, \quad m_k+\nabla(\bfx) \geq m_t,$$
  or equivalently
  \begin{equation}\label{admissible}
   \forall t\neq i,j,k, \quad d-m_i-m_j \geq m_t.
   \end{equation}

 To sum up, 
 \begin{itemize}
 \item any homaloidal type $\bfx$ of degree $d\geq 2$ determines a seed $(m'_i, m'_j, m'_k)$ from the set of multiplicities $m'_1\geq m'_2\geq \ldots \geq m'_{r'}$ of its parent $\bfx'=p(\bfx)$;
 \item conversely, any triple $(m_i,m_j,m_k)$ in the set of multiplicities of $\bfx$ that satisfies~(\ref{admissible}) is a seed of $\bfx$ that determines a child $T_{i,j,k}\bfx$ of~$\bfx$. 
 \end{itemize}
 In what follows,
   we will often consider {\bf{descending lineages}} (also called simply {\bf{descendants}}) $\bfx_0, \ldots,\bfx_N$, which are completely determined by $\bfx_0$ and a sequence of seeds $(m_{i(n)}^{(n)},m_{j(n)}^{(n)},m_{k(n)}^{(n)})$ for $0\leq n\leq N-1$, where 
$$\bfx_n=(d^{(n)};m_{1}^{(n)}, m_{2}^{(n)}, \ldots).$$
    
$$
\begin{tikzpicture}[xscale=1.4,yscale=1.4]
\coordinate (d1) at (4,6);
\coordinate (d2) at (4,5);
\coordinate (d3) at (4,4);
\coordinate (d4a) at (2,3);
\coordinate (d4b) at (6,3);
\coordinate (d5a) at (0,1.5);
\coordinate (d5b) at (4,1.5);
\coordinate (d5c) at (8,1.5);
\coordinate (d6a) at (0,0);
\coordinate (d6b) at (2.667,0);
\coordinate (d6c) at (5.333,0);
\coordinate (d6d) at (8,0);

\draw[red] (d1) -- (d2);
\draw[yellow] (d2) -- (d3);
\draw[red] (d2) -- (d4a);
\draw[black] (d3) -- (d4a);
\draw[yellow] (d3) -- (d4b);
\draw[yellow] (d3) -- (d5b);
\draw[yellow] (d3) -- (d6c);
\draw[red] (d4a) -- (d5a);
\draw[red] (d4a) -- (d6a);
\draw[red] (d4a) -- (d6b);
\draw[black] (d4b) -- (d5b);
\draw[black] (d4b) -- (d5c);
\draw[black] (d4b) -- (d6c);
\draw[black] (d5a) -- (d6a);
\draw[black] (d5b) -- (d6a);
\draw[black] (d5b) -- (d6b);
\draw[black] (d5b) -- (d6c);
\draw[black] (d5c) -- (d6d);

\fill (d1) circle (1pt);
\fill (d2) circle (1pt);
\fill (d3) circle (1pt);
\fill (d4a) circle (1pt);
\fill (d4b) circle (1pt);
\fill (d5a) circle (1pt);
\fill (d5b) circle (1pt);
\fill (d5c) circle (1pt);
\fill (d6a) circle (1pt);
\fill (d6b) circle (1pt);
\fill (d6c) circle (1pt);
\fill (d6d) circle (1pt);

\draw (d1) node [anchor=west,font=\footnotesize]  {$(1;0)$};
\draw (d2) node [anchor=west,font=\footnotesize]  {$(2;1^{3})$};
\draw (d3) node [anchor=west,font=\footnotesize]  {$(3;2,1^{4})$};
\draw (d4a) node [anchor=east,font=\footnotesize]  {$(4;2^{3},1^{3})$};
\draw (d4b) node [anchor=west,font=\footnotesize]  {$(4;3,1^{6})$};
\draw (d5a) node [anchor=east,font=\footnotesize]  {$(5;2^{6})$};
\draw (d5b) node [anchor=east,font=\footnotesize]  {$(5;3,2^{3},1^{3})$};
\draw (d5c) node [anchor=west,font=\footnotesize]  {$(5;4,1^{8})$};
\draw (d6a) node [anchor=north,font=\footnotesize]  {$(6;3^{2},2^{4},1)$};
\draw (d6b) node [anchor=north,font=\footnotesize]  {$(6;3^{3},2,1^{4})$};
\draw (d6c) node [anchor=north,font=\footnotesize]  {$(6;4,2^{4},1^{3})$};
\draw (d6d) node [anchor=north,font=\footnotesize]  {$(6;5,1^{10})$};
\end{tikzpicture}
$$

\begin{eg}
The picture above 
represents all possible ways to reach the homaloidal types in degree $6$, via quadratic Cremona maps, from lower degree homaloidal types. We do not include children in degree $\geq 7$. The red paths represent the complete lineages of $(5;2^6),(6;3^2,2^4,1),(6;4,2^4,1^3)$. They have $(4;2^3,1^2)$ as parent so that $(4;2^3,1^2)$ has $(5;2^6),(6;3^2,2^4,1),(6;4,2^4,1^3)$ has children. The parent of $(4;2^3,1^2)$ is $(2;1^3)$ which has a second child, namely $(3;2,1^4)$. The homaloidal type $(3;2,1^4)$ has in turn three children, namely $(5;3,2^3,1^3)$, $(6,3^3,2,1^4)$, $(4;3,1^6)$. Their complete lineages is represented by the yellow paths together with the red segment joining $(1;0)$ and $(2;1^3)$.
\end{eg}

\subsection{Irreducible components and the main intermediate statement}\label{Bird}  For $d\geq 1$, let $\Bir_d=\Bir(\bbP^2_\bfk)_d$ be the family of all birational transformations $f$ of $\bbP^2_\bfk$ of degree equal to $d$. Using homogeneous formulas $[f_0:f_1:f_2]$ with no common factor, one can endow $\Bir_d$ with the structure of an algebraic variety (embedded as a non-closed variety in $\bbP^{M-1}$ where $M = 3\binom{d+2}{2}$, see~\cite{Blanc-Furter}).

As said in the Introduction, we denote by $N_d$ the number of irreducible components of $\Bir_{d}$. The main result we shall need, beside Hudson's test, is that {\emph{ irreducible components of $\Bir_d$ are in $1$-to-$1$ correspondence with homaloidal types of degree $d$}} (see~\cite{ac:book}). Thus, $N_d$
is just the number of such homaloidal types, and the purpose of this article is to prove the following (as well as a similar result for $\sum_{d'\leq d}N_{d'}$):

\begin{thm} \label{mainthm}
 Let $N_d$ be the number of (proper) homaloidal types of degree $d$, and $N_{\leq d}:=\sum_{d'=1}^D N_{d'}$ the number of homaloidal type of degree $\leq d$. 
Then
 $$\sqrt{\ln(2)}\leq \liminf_{d\to \infty} \frac{\ln \ln N_{\leq d}}{\sqrt{\ln d}}\leq \limsup_{d\to \infty} \frac{\ln \ln N_{\leq d}}{\sqrt{\ln d}}\leq 2\sqrt{\ln 2},$$
 and

$$\sqrt{\ln(2)}\leq \limsup_{d\to \infty} \frac{\ln \ln N_d}{\sqrt{\ln d}} \leq 2\sqrt{\ln 2}.$$
 \end{thm}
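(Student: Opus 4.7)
The plan is to work entirely within the Hudson tree $\mathcal{T}$, so that $N_{\leq d}$ becomes the number of vertices of $\mathcal{T}$ lying at degree $\leq d$, and every non-root vertex is determined by its parent together with the admissible seed that gave birth to it. I would then bound this vertex count from below and from above.

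For the lower bound, I would exploit the fact that the \emph{zero seed} $(0,0,0)$ is admissible at every homaloidal type $\bfx=(\delta;m_1,\dots,m_r)$: after padding with zero multiplicities, Noether's inequality $m_t\leq m_1\leq \delta-1<\delta$ makes the admissibility condition~(\ref{admissible}) trivial, and the resulting child is $(2\delta;\delta^3,m_1,\dots,m_r)$, of exactly doubled degree. Iterating zero seeds from $(1;0)$ produces a canonical ``spine'' lineage $\bfx^{(0)},\dots,\bfx^{(h)}$ with $\deg\bfx^{(h)}=2^h$. At most vertices of this spine, alternative admissible seeds are available; for instance $(1,0,0)$ is admissible whenever the type contains a multiplicity $1$, and it produces a distinct child of degree $2\delta-1$. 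By judiciously branching at a carefully chosen subset of spine levels and controlling the resulting multiplicity profiles to ensure distinctness, one should obtain $\exp(e^{\sqrt{\ln 2\cdot \ln d}})$ pairwise distinct types of degree $\leq d$.

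For the upper bound, I would bound the product of branching factors along every descending lineage from the root. At a type $\bfx$ of degree $\delta$ and seedbed $s$, the number of admissible seeds is at most $\binom{s+3}{3}$ (multisets of size three drawn from $\{0,\mu_1,\dots,\mu_s\}$ subject to the occurrence constraints). Noether's identities force $s\leq C\delta^{2/3}$, but this naive estimate is far too weak: the resulting lineage-product gives only an exponential count. The key step is a hierarchical control of how the multiplicity profile evolves along a descending lineage. One would isolate a sparse subset of ``branching'' vertices on each lineage, show that the degree must roughly multiply whenever genuine branching occurs, and telescope a weighted branching estimate to obtain $N_{\leq d}\leq \exp(e^{2\sqrt{\ln 2\cdot \ln d}})$.

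The heart of the difficulty lies in this upper bound. The admissibility constraint $m_t\leq d-m_i-m_j$ is drastically more restrictive than the bare Noether equalities, which would merely yield $N_d\leq e^{O(\sqrt{d})}$ by a partition-theoretic count of solutions to $\sum m_i=3d-3$, $\sum m_i^2=d^2-1$; extracting the sub-exponential scaling $\exp(e^{O(\sqrt{\ln d})})$ demands a careful bookkeeping of \emph{which} triples of multiplicities can act as seeds and \emph{how often} such triples genuinely branch as one descends the tree. The remaining factor-of-two gap between $\sqrt{\ln 2}$ and $2\sqrt{\ln 2}$ is likely delicate, reflecting an asymmetry between constructing lineages downward from the root and controlling their breadth via ascending Hudson reductions.
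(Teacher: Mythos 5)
Your proposal is a sketch, and both directions have issues that prevent it from succeeding as stated.

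\textbf{Lower bound.} The spine you propose, obtained by iterating the zero seed from $(1;0)$, consists of types of the form $(2^h;(2^{h-1})^3,\dots,2^3,1^3)$. Such a type has seedbed $h\approx\log_2 d$, so the number of admissible seeds at any spine vertex is $O(h^3)$, polynomial in $\log d$. Even allowing arbitrary branching at every level, the number of types you can reach this way by degree $d$ is at most $\exp\bigl(O(\log d\cdot\log\log d)\bigr)$, which is vastly smaller than $\exp\bigl(e^{\sqrt{\ln 2\,\ln d}}\bigr)$; you would get $\ln\ln N_{\leq d}=O(\ln\ln d)$, not $\Omega(\sqrt{\ln d})$. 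What is needed, and what the paper supplies, is a construction that makes the seedbed itself grow like $2^{\sqrt{\ln d/\ln 2}}$, and this cannot be done by wandering around the low-seedbed spine. The paper's Lemma~\ref{splitting_blocks} achieves this by starting from a de Jonqui\`eres type $(2^{N-1}+1;2^{N-1},1^{2^N})$ --- which has a single block of width $2^N$ --- and successively halving blocks, doubling the seedbed at each macro-step while only multiplying the degree by a controlled factor; after $N-1$ steps the seedbed is $2^{N-1}$ at degree $\lesssim 2^{N^2}10^N$. Once you have a single type with seedbed $s$, Proposition~\ref{lower_bound_children} shows it has $2^s$ descendants of degree $\lesssim s\,d$, and this is the mechanism that yields the $\sqrt{\ln 2}$ bound. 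Your proposal omits this essential block-splitting idea.

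\textbf{Upper bound.} You correctly identify that the raw Noether system only gives $N_d\leq e^{O(\sqrt d)}$ and that the admissibility constraint must do the work, and your heuristic that the branching factor is $O\bigl(\binom{s+3}{3}\bigr)$ is the right starting observation. But telescoping branching factors along lineages requires knowing two things: (i) how fast the seedbed $s$ can increase along a lineage relative to the degree, and (ii) how to sum over all lineages without overcounting. The paper handles (i) via the estimate $s(\bfx_\ell)\leq\min(4s(\bfx_1)-1,\;s(\bfx_1)+3(\ell-1))$ for $*$-lineages (Proposition~\ref{s_bound_t1}), combined with the linear degree growth of Lemma~\ref{degree_along_t1} and a regime analysis of small/average/large first multiplicity, iterated through the squared-logarithm inequality in Corollary~\ref{ineq_t1} to give $s(\bfx)\leq C_\alpha e^{\alpha\sqrt{\ln d}}$. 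It then bypasses (ii) entirely: instead of summing over lineages, it counts types of degree $d$ with seedbed $\leq\sigma$ directly, by mapping each type to the $\sigma$-tuple $(\mu_1\nu_1,\dots,\mu_\sigma\nu_\sigma)$ (a composition of $3d-3$) and bounding fibers; this gives $N_d\leq(12d^2)^\sigma$. Your ``telescope a weighted branching estimate'' is a plausible-sounding plan but is not carried out, and the factor-4 seedbed bound, which is what makes $2\sqrt{\ln 2}=\sqrt{2\ln 4}$ appear, is the hard lemma you would have to discover and prove.

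In short: your upper-bound plan points roughly in the direction of the paper's seedbed-control strategy but lacks its key lemmas, while your lower-bound plan rests on a spine whose seedbed is too small by an exponential factor, so the quantitative claim you state does not follow from the construction you describe.
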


The strategy is to estimate precisely the maximal growth of the  seedbed $s(\bfx)$ of a homaloidal type $\bfx$ in terms of its degree $d(\bfx)$. We shall prove the following statement.

\begin{thm} \label{lengthestimate}
For any $\alpha>2\sqrt{\ln2}$, there exists $C_\alpha>0$ such that for all $\bfx$,
$$s(\bfx)\leq C_\alpha e^{\alpha \sqrt{\ln(d(\bfx))}}.$$
For any $\beta<\sqrt{\ln(2)}$, there exists a homaloidal type $\bfx$ of arbitrarily large degree such that
$$s(\bfx)\geq e^{\beta \sqrt{\ln(d(\bfx))}}.$$ 
\end{thm}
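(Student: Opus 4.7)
The plan is to prove both bounds by analyzing the recursive structure of the Hudson tree in complementary ways.

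\textbf{Upper bound.} The strategy is to walk up the Hudson tree from $\bfx$ to the root $(1;0)$ and derive a multiplicative lower bound on the degree in terms of the seedbed. I would partition an ascending lineage into \emph{phases} indexed by dyadic levels of the seedbed: phase $k$ consists of the consecutive steps where $s$ lies in the interval $[2^k, 2^{k+1})$. The central technical lemma to establish is that over phase $k$ the degree must shrink by a multiplicative factor of at least $2^{k/2}$. This estimate should follow from Noether's second equality $\sum m_i^2 = d^2-1$ combined with the admissibility constraints $d-m_i-m_j \geq m_t$ at each Hudson step: when many distinct multiplicities coexist, the three top multiplicities cannot all be close to the diagonal value $(d+1)/3$, and an amortized lower bound on $\Delta = m_1+m_2+m_3-d$ emerges over consecutive steps within a phase. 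Telescoping across phases $k = 0, 1, \ldots, \lfloor \log_2 s(\bfx) \rfloor$ gives
\begin{equation*}
\log_2 d(\bfx) \geq \sum_{k=0}^{\lfloor\log_2 s(\bfx)\rfloor} \frac{k}{2} \sim \frac{(\log_2 s(\bfx))^2}{4},
\end{equation*}
which, rewritten, is $\ln s(\bfx) \leq 2\sqrt{\ln 2}\cdot\sqrt{\ln d(\bfx)} + O(1)$, matching the claim for any $\alpha > 2\sqrt{\ln 2}$.

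\textbf{Lower bound.} For the lower bound I would construct an explicit sequence $\bfx_n$ of homaloidal types in the Hudson tree with $s(\bfx_n) \geq 2^n$ and $\log_2 d(\bfx_n) \leq C n^2$ for an absolute constant $C$. The construction is recursive: given $\bfx_n$ with spectrum $\{\mu_1 > \mu_2 > \cdots > \mu_{s_n}\}$, I build $\bfx_{n+1}$ by a block of about $s(\bfx_n)$ Hudson descents, each designed to introduce one fresh multiplicity value into the spectrum. Concretely, I would pick seeds of the form $(\mu_a, \mu_b, 0)$ with $\mu_a, \mu_b$ suitably low in the spectrum so that admissibility is automatic and the shifted value $d - \mu_b$ lands in a gap of the existing spectrum. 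The combinatorial heart of the construction is verifying that such seeds are always available along the block, and that the cumulative multiplicative degree growth over a block of length $s(\bfx_n)$ is bounded by $s(\bfx_n)^2$. Iterating yields $\log_2 d(\bfx_n) \leq \sum_{k=0}^{n-1} 2k = n(n-1)$ and therefore $\ln s(\bfx_n) \geq \sqrt{\ln 2}\cdot\sqrt{\ln d(\bfx_n)}$, as required.

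\textbf{Main obstacle.} The principal difficulty is the per-phase multiplicative estimate at the heart of the upper bound. The bare Noether equalities give only $s(\bfx) = O(\sqrt{d(\bfx)})$, so the sub-exponential improvement to $s \leq e^{O(\sqrt{\ln d})}$ must come from an amortized analysis spanning many Hudson descents. Defining the phases rigorously, controlling transitions at the phase boundaries, and extracting the precise factor $2^{k/2}$ per phase are where the real work lies. The factor-of-four gap between the upper and lower constants (namely $1/(4\ln 2)$ versus $1/\ln 2$) is itself intriguing: it reflects the asymmetric roles of the three leading multiplicities in Noether's inequality and suggests that neither bound is necessarily tight.
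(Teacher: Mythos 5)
Your broad strategy — walking the Hudson tree, telescoping a per-step inequality between $\ln d$ and $\ln s$, and building an explicit family for the lower bound — is the same as the paper's, and your target arithmetic (phases, telescoping sum, the constants $2\sqrt{\ln 2}$ and $\sqrt{\ln 2}$) all check out. But both halves of your argument have concrete gaps at precisely the points where the actual mathematics lives.

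For the upper bound, the proposed per-phase estimate (``degree shrinks by $2^{k/2}$ while $s$ passes through $[2^k,2^{k+1})$'') is the entire content of the theorem and is left unproven. You attribute it to ``Noether's second equality combined with the admissibility constraints,'' but these alone do not give it: the bare Noether relations yield only $s = O(\sqrt d)$, which is far weaker. The paper's mechanism is a structural analysis of seeds: it singles out $*$-seeds (seeds whose first entry is the current maximal multiplicity $m_1$), proves that along a run of $*$-births the seedbed can increase by at most a factor of $4$ additively corrected (Proposition~\ref{s_bound_t1}: $s(\bfx_\ell)\leq\min(4s(\bfx_1)-1, s(\bfx_1)+3(\ell-1))$), and couples this with the linear degree growth of Lemma~\ref{degree_along_t1}. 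It then decomposes the lineage not by dyadic levels of $s$ but by visits to the ``average first multiplicity'' regime, because this gives an unconditional per-segment degree increase (Proposition~\ref{average}). Your dyadic phase structure also has a definitional problem: $s$ is not monotone along lineages (it can drop, e.g.\ when multiplicities merge), so ``phase $k$'' is not a consecutive block of steps and a naive telescoping over phases is not available. Controlling this non-monotonicity is part of what the paper's more elaborate bookkeeping (Steps 2--3 of Proposition~\ref{upp_b_s}) is doing.

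For the lower bound, the proposed seeds $(\mu_a,\mu_b,0)$ with $\mu_a,\mu_b$ ``low in the spectrum'' will not work: for such a seed, $\nabla = d - \mu_a - \mu_b$ is of order $d$, so each descent roughly \emph{doubles} the degree. Over a block of $\ell \sim s$ descents the degree grows like $2^{s}$, not like $s^2$ as you need, and iterating gives $s = O(\log d)$, i.e.\ only logarithmic rather than $e^{c\sqrt{\ln d}}$ growth of the seedbed. Moreover your seeds are typically not $*$-seeds (the first entry is not the maximum multiplicity), which makes admissibility fail: admissibility (equation~\eqref{admissible}) forces $d-\mu_a-\mu_b\geq m_1$, i.e.\ $m_1+\mu_a+\mu_b\leq d$, and once $m_1$ gets large this condition blocks the very seeds you want to use. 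The paper sidesteps both problems by choosing $*$-seeds of the special form $(*,\mu_i,\mu_i)$: these keep $d-m_1$ constant along the run (Lemma~\ref{dmm1}), so $\nabla$ stays uniformly bounded and the degree grows only \emph{linearly} in the number of descents, and the repetition $(*,\mu_i,\mu_i),(*,\mu_i,\mu_i),\dots$ deliberately wastes new-multiplicity creation after the first step (Lemma~\ref{successive}) in exchange for the controlled degree growth. The block-splitting construction of Lemma~\ref{splitting_blocks} is then an explicit realization of your recursive scheme, but with the correct choice of seeds.
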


As a consequence, 
$$
\sqrt{\ln(2)}\leq \limsup \frac{\ln(s(d))}{\sqrt{\ln(d)}}\leq 2\sqrt{\ln(2)}
$$
where $s(d)$ denotes the {\bf{maximal seedbed}} among homaloidal types of degree~$d$.
The reason why $s(\bfx)$ is called the seedbed of $\bfx$ is because a large seedbed corresponds to a large variety of seeds for $\bfx$, hence a large number of edges emanating from $\bfx$ in the Hudson tree. 

\begin{rem} 
Since any homaloidal type of degree $d$ satisfies the Noether Equalities~\eqref{NE}, $N_d$ is bounded from above by the number of partitions of $3d-3$ and the Hardy-Ramanujam estimate implies that  $N_d\leq  (a/d) \exp(b\sqrt{d}) $, for some $a>0$ and $b=\pi\sqrt{2}$. This upper bound relies only on the first Noether equality; adding the second inequality is reminiscent of the Hilbert-Kamke problem (see~\cite{Karatsuba:Survey}), but with a number $r$ of terms $m_i$ that is not uniformly bounded.
From this, one can show that the number of solutions to~\eqref{NE} grows at least as $\exp(b'\sqrt{d})$  for some $b'>0$. In particular, Theorem~\ref{mainthm} implies that the probability that a solution of~\eqref{NE} be represented by a homaloidal type goes to $0$ as $d$ goes to $+\infty$.The next  table lists the values of $N_d$ and of the number $S_{d}$ of solutions to~\eqref{NE} (with non-increasing $m_i$)
  for $d\in [2,15]$:
$$
\begin{array}{c|c|c|c|c|c|c|c|c|c|c|c|c|c|c}
d & 2 & 3 & 4 & 5 & 6 & 7 & 8 & 9 & 10 & 11 & 12 & 13 & 14 & 15 \\ 
\hline
N_d & 1 & 1 & 2 & 3 & 4 & 5 & 9 & 10 & 17 & 19 & 29 & 34 & 51 & 63\\
\hline
S_{d} & 1 & 1 & 2 & 4 & 5 & 9 & 16 & 25 & 42 & 64 & 107 & 165 & 256 & 402
\end{array} 
$$

\end{rem}

\section{Basic inequalities}

 \vspace{0.2cm}
\begin{center}
\begin{minipage}{12cm}
{\sl{ As recalled in Section~\ref{par:Noether}, the multiplicities of a homaloidal type satisfy certain basic inequalities. The next lemmas describe some of them.  }}
\end{minipage}
\end{center}
\vspace{0.05cm}

 \begin{lem} \label{3first}
Let $(m_i,m_j,m_k)$ be a seed of a homaloidal type of degree $\geq 2$. Then at most one value was chosen from the three largest ones, so that
 $$j > 3.$$ 
 \end{lem}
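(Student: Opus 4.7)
The plan is to argue by contradiction: suppose $j\leq 3$ and derive in each case a violation of the Noether inequality $m_1+m_2+m_3\geq d+1$ recalled in Section~\ref{par:Noether}. Since the seed indices satisfy $i<j<k$ by the minimal-index convention, the pair $(i,j)$ must belong to $\{(1,2),(1,3),(2,3)\}$, and $k\geq 4$ automatically except in the single sub-case $(i,j,k)=(1,2,3)$.

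The three cases are then handled uniformly using the admissibility condition~(\ref{admissible}), namely $d-m_i-m_j\geq m_t$ for every $t\notin\{i,j,k\}$. For $(i,j)=(1,3)$, taking $t=2$ gives $m_1+m_2+m_3\leq d$; for $(i,j)=(2,3)$, taking $t=1$ gives the same bound; and for $(i,j)=(1,2)$ with $k\geq 4$, taking $t=3$ again yields $m_1+m_2+m_3\leq d$. The only remaining sub-case is $(i,j,k)=(1,2,3)$, where there is no index $t\in\{1,2,3\}\setminus\{i,j,k\}$ to feed into~(\ref{admissible}); here I would instead invoke the seed inequality~(\ref{eq:nabla}), $\nabla(\bfx)=d-m_1-m_2-m_3\geq 1$, which yields the strictly stronger bound $m_1+m_2+m_3\leq d-1$. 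In every situation the conclusion contradicts the Noether inequality $m_1+m_2+m_3\geq d+1$, so $j\geq 4$ as claimed.

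The argument is essentially a direct unpacking of the definitions against~(\ref{eq:Noether_Inequality}) and I anticipate no substantive obstacle; the only point deserving care is separating out the sub-case $k=3$ of $(i,j)=(1,2)$, where admissibility alone does not supply a usable index $t$ and one has to fall back on the seed inequality $\nabla(\bfx)\geq 1$ coming from the very definition of a seed.
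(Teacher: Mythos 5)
Your proof is correct and follows essentially the same route as the paper's: a case analysis on which pair of indices from $\{1,2,3\}$ can appear in the seed, each case dispatched by the admissibility condition~(\ref{admissible}) against the Noether inequality~(\ref{eq:Noether_Inequality}). The only presentational difference is that the paper folds the sub-case $(i,j,k)=(1,2,3)$ into its first case with the terse remark ``hence $k\neq 3$'', whereas you explicitly invoke $\nabla(\bfx)\geq 1$ from~(\ref{eq:nabla}) to rule it out — which is precisely the argument the paper leaves implicit.
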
  
 \begin{proof}
  We make a case-by-case analysis to show that any $2$ values taken in the set $\{m_1,m_2,m_3\}$ lead to a contradiction. 
 
 $\bullet$ Assume that the triple is of the form $(m_1,m_2,m_k)$. By Equation \eqref{eq:Noether_Inequality},
   \begin{equation}\label{inequality}
    m_1+m_2+m_3>d,
   \end{equation}
    hence $k\neq 3$; so by Equation (\ref{admissible}), we obtain
   $d-m_1-m_2\geq m_3$,
  in contradiction with Inequality (\ref{inequality}).

$\bullet$Assume that the triple is of the form $(m_1,m_3,m_k)$. By Equation (\ref{admissible}),
   $$d-m_1-m_3\geq m_2$$ 
  and again, this contradicts Inequality (\ref{inequality}).

$\bullet$ Assume that the triple is of the form $(m_2,m_3,m_k)$. By Equation (\ref{admissible}),
   $$d-m_2-m_3\geq m_1$$
   and for the same reason this is a contradiction. 
 \end{proof}

\begin{lem} \label{low_bou_m2m3} 
Let $\bfx=(d;m_1,m_2,m_3, \ldots)$ be a (proper) homaloidal type of degree $d\geq 2$. 
Then
 $$m_1+m_2\leq d, $$ 
 $$ m_1+2m_3> d$$
and 
 $$m_2 \geq \frac{d^2-1-m_1^2}{3d-3-m_1}.$$ 
\end{lem}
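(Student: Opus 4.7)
I handle the three inequalities separately, using only the two Noether equalities~\eqref{NE} together with the Noether inequality~\eqref{eq:Noether_Inequality}.

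Inequality (3) is immediate. Using $m_2 \geq m_i$ for all $i \geq 2$, one has $m_i^2 \leq m_2 m_i$; summing and applying~\eqref{NE} gives
\[
d^2-1-m_1^2 \;=\; \sum_{i\geq 2} m_i^2 \;\leq\; m_2\sum_{i\geq 2} m_i \;=\; m_2(3d-3-m_1),
\]
and (3) follows on dividing by $3d-3-m_1 > 0$.

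For (1), I would use the classical intersection-theoretic argument: on the blow-up $S \to \bbP^2$ at the base points $p_1,\dots,p_r$, the homaloidal net defines a class $L=dH-\sum m_i E_i$, and the strict transform of the line through $p_1,p_2$ (or, when $p_2$ is infinitely near $p_1$, of the branch at $p_1$ through $p_2$) has class $H-E_1-E_2$. Since $L$ has no fixed component, the intersection $(H-E_1-E_2)\cdot L = d-m_1-m_2$ is nonnegative. Alternatively one can cite~\cite{ac:book}.

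Inequality (2) is the main obstacle. The plan is to exhibit two lower bounds on $m_3$ and to show that at least one of them strictly exceeds $(d-m_1)/2$. The first comes from the Noether inequality: $m_3 \geq d+1-m_1-m_2$, which beats $(d-m_1)/2$ exactly when $m_1+2m_2 < d+2$, and then (2) is immediate. In the opposite regime $m_1+2m_2 \geq d+2$, one has $m_2 \geq (d+2-m_1)/2$; combined with (1) and $m_2 \leq m_1$, this confines $(m_1,m_2)$ to a small window, in particular $m_1 \in [(d+2)/3,\,d-2]$. Within this window I apply the same tail trick as in (3), but now to the indices $i\geq 3$ (using $m_3 \geq m_i$), to obtain the stronger bound
\[
m_3 \;\geq\; \frac{d^2-1-m_1^2-m_2^2}{3d-3-m_1-m_2},
\]
and the remaining task is to verify that this quantity strictly exceeds $(d-m_1)/2$.

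The hard part is exactly this last verification. Cleared of denominators it is a quadratic in $m_2$ with negative leading coefficient whose vertex lies to the left of the allowed interval $[(d+2-m_1)/2,\,\min(m_1,d-m_1)]$, so the minimum on the interval is attained at its right endpoint; the problem then reduces to a one-variable polynomial inequality in $m_1$ which I expect to hold throughout $m_1 \in [(d+2)/3,\,d-2]$. The very small cases $d \in \{2,3\}$ will be handled by direct inspection.
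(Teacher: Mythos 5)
Your derivation of the third inequality is exactly the paper's, and your proof of the first (via intersection with $H-E_1-E_2$ on the blow-up) is a perfectly valid, standard alternative to the paper's Hudson-tree argument (which simply notes that the seed of the parent has $m'_k=d-m_1-m_2\geq 0$). Those two parts are fine.

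The second inequality is where the real gap lies. You explicitly leave the verification unfinished: after splitting into the regime $m_1+2m_2\geq d+2$ and writing down the refined bound $m_3\geq (d^2-1-m_1^2-m_2^2)/(3d-3-m_1-m_2)$, you reduce to a polynomial inequality that you state you ``expect to hold'' but do not prove. That cannot be left as an expectation; it is the entire content of the hard case. (For what it is worth, the expected inequality is in fact true: clearing denominators, the difference $2(d^2-1-m_1^2-m_2^2)-(d-m_1)(3d-3-m_1-m_2)$ is concave in $m_2$ with vertex $(d-m_1)/4$, which is indeed to the left of the interval; at the right endpoint $m_2=\min(m_1,d-m_1)$ the $d^2$ terms cancel and one is left with linear expressions such as $\tfrac32 d-2$ and $4d-12$, positive throughout the admissible window $m_1\in[(d+2)/3,\,d-2]$, so your plan would go through once the algebra is carried out, with a further case split according to whether $m_1\leq d/2$.) Note also that your route, purely from the Noether relations plus inequality (1), is genuinely different from the paper's, which is one line: Lemma~\ref{3first} gives $j>3$ for the seed of the parent, so $m_3\geq m_4\geq m'_2>m'_j=d-m_1-m_3$, hence $m_1+2m_3>d$ immediately. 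The paper's argument is substantially shorter precisely because it uses the Hudson-tree structure that Lemma~\ref{3first} already packaged; your Noether-only approach is more elementary in spirit but pays for it with a nontrivial polynomial verification that, as written, is missing.
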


The first two inequalities are well known. The second one is in Lemma 8.2.6 of~\cite{ac:book}, it implies Noether's inequality, and it holds for improper homaloidal types such that $m_1+m_2\leq d$. 
\begin{figure}[h]\label{fig:density}
\includegraphics[width=6.5cm]{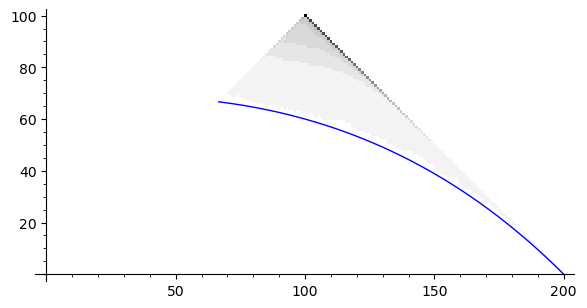}\includegraphics[width=6.5cm]{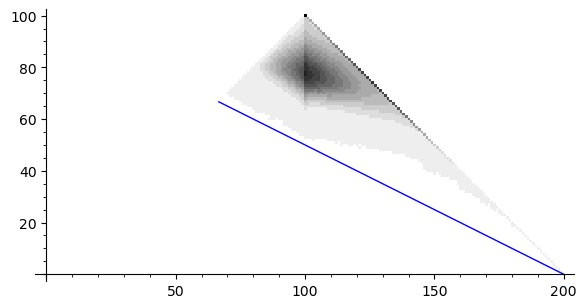}
\caption{On the left, schematic representation of the distribution of ($m_1,m_2$) for the $3585742777$ homaloidal types of degree $200$, where darker indicates more homaloidal types. On the right, the distribution of ($m_1,m_3$) for the same degree. Both are included in triangles determined by the inequalities $m_1+m_2\leq d$, $m_2 \leq m_1$ and similarly for $m_3$. The blue curves indicates the lower bounds on $m_2$ and $m_3$ as functions of $m_1$  given by Lemma~\ref{low_bou_m2m3}.   }
\end{figure}

\begin{proof} Let $\bfx'=(d';m_1', \ldots)$ be the parent of $\bfx$ and let $(m'_i,m'_j,m'_k)$ be the corresponding seed. Then, $m'_k=d-m_1-m_2\geq 0$, hence $m_1+m_2\leq d$.  
For the second inequality, note that $j>3$ by Lemma \ref{3first}. Thus,  $m_4$ is equal to $m'_2$ if $i=1$ or $m'_1$ otherwise. In both cases,
$$m_3\geq m_4 \geq m'_2>m'_j,$$
so 
$$m_3>m_2+(d-m_1-m_2-m_3).$$
Hence the second inequality.

The last inequality is deduced from Noether's equations. By Noether's first equation,
$$d^2-1=m_1^2+m_2^2+\sum_{i\geq 3} m_i^2,$$ 
so since $m_i\leq m_2$ for $i\geq 3$,
$$d^2-1\leq m_1^2+m_2^2+m_2(\sum_{i\geq 3} m_i),$$
using Noether's second equation,
$$d^2-1\leq m_1^2+m_2^2+m_2(3d-3-m_1-m_2)=m_1^2+m_2(3d-3-m_1),$$ 
so
$$\frac{d^2-1-m_1^2}{3d-3-m_1}\leq m_2,$$ 
As required.
\end{proof}

\section{Seeds, $*$-seeds, and tails}

 \vspace{0.2cm}
\begin{center}
\begin{minipage}{12cm}
{\sl{ We collect several important lemmas that  concern the sequences of seeds in a descending lineage. }}
\end{minipage}
\end{center}
\vspace{0.05cm}

\subsection{$*$-triples and $*$-seeds}

 We say that the triple $(m_i,m_j,m_k)$ of $\bfx$ is a {\bf $*$-triple} if $m_i=m_1$ (i.e. $i=1$). 
 Such a triple will often be written $(*,m_j,m_k)$, as the first value is implied. A seed defined by a $*$-triple will be called a {\bf $*$-seed}, and the birth of a child conducted by a $*$-seed will be called a {\bf{$*$-birth}}.
 Of particular interest to us will be lineages that are defined by sequences of successive $*$-seeds, as they admit simple combinatorial descriptions.

 Being a $*$-triple can be characterized by how $d-m_1$ increases:
  \begin{lem} \label{dmm1}
  Let $\bfx=(d;m_1, \ldots,m_r)$ be some homaloidal type, $[m_i,m_j,m_k]$ be a seed and $\bfx'=(d';m_1', \ldots.,m'_r)=T_{i,j,k}\bfx$ be the corresponding child. Then
  $$d'-m'_1\geq d-m_1,$$
  with equality if and only if $(m_i,m_j,m_k)$ is a $*$-seed, that is $m_i=m_1$.
  \end{lem}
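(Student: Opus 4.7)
The strategy is to unpack the definitions directly. Set $\nabla := \nabla_{i,j,k}(\bfx) = d - m_i - m_j - m_k \geq 1$, so that $d' = d + \nabla$. The multiplicities of $\bfx'$ form the multiset
$$
S = \{ m_i + \nabla,\ m_j + \nabla,\ m_k + \nabla \} \cup \{ m_t : t \notin \{i,j,k\} \},
$$
and $m'_1$ is the maximum of $S$. Since $d' - m'_1 = (d-m_1) + (m_1 + \nabla - m'_1)$, the lemma reduces to showing that $m'_1 \leq m_1 + \nabla$ always, with equality precisely when $m_i = m_1$. I will handle the two cases separately.

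In the $*$-seed case $i = 1$, the three shifted values satisfy $m_1 + \nabla \geq m_j + \nabla \geq m_k + \nabla$, and for any $t \notin \{1,j,k\}$ one has $m_1 + \nabla \geq m_1 \geq m_t$. Hence $m'_1 = m_1 + \nabla$, giving the desired equality $d' - m'_1 = d - m_1$.

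In the non-$*$-seed case, the minimal-index convention is the crucial point. If $m_i = m_1$, then $(m_i, m_j, m_k)$ and $(m_1, m_j, m_k)$ select the same triple of values, and the convention forces the first index to be $1$; so $i \neq 1$ actually implies $m_i < m_1$. By the same argument, if $j = 1$ or $k = 1$, then $m_j = m_1$ or $m_k = m_1$, forcing $m_i \geq m_1$ and hence $m_i = m_1$, contradiction. So $1 \notin \{i,j,k\}$, and the admissibility condition~(\ref{admissible}) applied to $t = 1$ gives $m_1 \leq d - m_i - m_j = m_k + \nabla$. Consequently every element of $S$ is bounded above by $m_i + \nabla$, so $m'_1 = m_i + \nabla$ and
$$
d' - m'_1 = d - m_i > d - m_1,
$$
which is the strict inequality.

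The only delicate point is the handling of repeated multiplicities through the minimal-index convention; once it is used to conclude $m_i < m_1$ (and $1 \notin \{i,j,k\}$) whenever the seed is not a $*$-seed, the admissibility inequality~(\ref{admissible}) and the trivial ordering $m_i \geq m_j \geq m_k$ deliver the identification of $m'_1$ in each case and the lemma follows immediately.
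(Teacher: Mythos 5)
Your proof is correct and follows essentially the same route as the paper: both hinge on the identification $m'_1 = m_i + \nabla$ (which you establish via the admissibility condition and the minimal-index convention) together with the observation $m_i \leq m_1$. You merely make explicit the case analysis that the paper leaves implicit in a one-line computation.
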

   \begin{proof} This follows, with $\nabla=\nabla_{i,j,k}(\bfx)$, from 
   $$ d'-m_1'=(d+\nabla)-(m_i+\nabla)=(d-m_1)+(m_1-m_i)$$
   and the convention that $i=\inf\{\ell\; ; \; m_\ell=m_i\}$.
   \end{proof}

  We will say that the first multiplicity $m_1$ of $\bfx=(d;m_1,m_2, \ldots)$ is {\bf lonely} if $m_1>m_2$. We note the following easy property:
\begin{lem} \label{lonely}
If $\bfx$ is obtained from a $*$-seed of $p(\bfx)$, then its first multiplicity is lonely.
\end{lem}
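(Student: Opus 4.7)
Write the parent as $\bfx' = p(\bfx) = (d'; m'_1, m'_2, \ldots, m'_{r'})$ and the $*$-seed as $(m'_1, m'_j, m'_k)$, with $1 < j < k$. Set $\nabla = \nabla_{1,j,k}(\bfx') = d' - m'_1 - m'_j - m'_k \geq 1$. Then the multiplicities of $\bfx = T_{1,j,k}\bfx'$ are obtained by reordering the multi-set
\[
\bigl\{\, m'_1 + \nabla,\; m'_j + \nabla,\; m'_k + \nabla\,\bigr\} \cup \bigl\{\, m'_t : t \notin \{1,j,k\}\,\bigr\}.
\]
My plan is to show that $m'_1 + \nabla$ is the \emph{strict} maximum of this multi-set, which is exactly the statement that the first multiplicity of $\bfx$ is lonely.

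\textbf{Step 1: Strict inequality from the three shifted entries.} By Lemma~\ref{3first}, $j > 3$, so $j \geq 4$. Combined with the convention that indices in a seed are chosen as small as possible, this forces $m'_j < m'_3$ strictly (otherwise $j$ could be taken equal to $3$, since the index $3$ is not already occupied by $i = 1$). In particular $m'_j < m'_1$, and a fortiori $m'_k \leq m'_j < m'_1$. Adding $\nabla$ on both sides gives
\[
m'_j + \nabla < m'_1 + \nabla \quad\text{and}\quad m'_k + \nabla < m'_1 + \nabla.
\]

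\textbf{Step 2: Strict inequality from the unshifted entries.} For any $t \notin \{1,j,k\}$ we have $m'_t \leq m'_1$ simply from the ordering. Since $\nabla \geq 1$, this yields
\[
m'_t \leq m'_1 < m'_1 + \nabla.
\]
Combined with Step~1, every element of the multi-set above other than $m'_1 + \nabla$ itself is strictly smaller than $m'_1 + \nabla$, so the largest multiplicity of $\bfx$ equals $m'_1 + \nabla$ and is attained exactly once.

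\textbf{Main difficulty.} The only real subtlety is establishing $m'_j < m'_1$ strictly, since if several of the top multiplicities of $\bfx'$ were equal the result could a priori fail. The minimality convention on indices, together with $j > 3$ from Lemma~\ref{3first}, rules out this degenerate case; the rest is a direct comparison.
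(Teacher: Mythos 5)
Your proof is correct and follows essentially the same route as the paper's. Both arguments hinge on the fact that Lemma~\ref{3first} gives $j>3$, which combined with the minimal-index convention forces $m'_j<m'_1$ strictly (the paper derives $m'_j<m'_2$, you derive the slightly stronger $m'_j<m'_3$; either suffices). The only cosmetic difference is that the paper uses admissibility to immediately identify the two largest multiplicities of $\bfx$ as $m'_1+\nabla$ and $m'_j+\nabla$ and notes they differ, whereas you verify by hand that $m'_1+\nabla$ strictly dominates every other element of the multi-set, including the unshifted $m'_t$ via $\nabla\geq 1$. That extra check is harmless and arguably makes the proof more self-contained.
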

\begin{proof} Write $p(\bfx)=(d';m'_1,m'_2,  \ldots)$, and let $(m'_1,m'_j,m'_k)$ be the seed giving birth to $\bfx$. Then by Lemma \ref{3first}, $m'_j<m'_2$ so $m'_j<m'_1$. The two largest multiplicities of $\bfx$ being $d'-m'_j-m'_k$ and $d'-m'_1-m'_k$, they are not equal.
\end{proof}

 A $*$-triple $(*,m_j,m_k)$ is a seed if and only if the second value $m_j$ is small enough:
  
 \begin{lem} \label{admissible12}
  A $*$-triple  $(*,m_j,m_k)$ is a $*$-seed if and only if
  $$d-m_1-m_2\geq m_j.$$
 \end{lem}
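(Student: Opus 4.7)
The plan is to prove both implications separately, with the key tools being Lemma~\ref{3first}, the admissibility condition~(\ref{admissible}), and Noether's inequality.

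For the forward direction, assume the $*$-triple $(m_1,m_j,m_k)$ is a seed. Lemma~\ref{3first} gives $j>3$, so in particular $2\notin\{1,j,k\}$. The admissibility condition~(\ref{admissible}) evaluated at $t=2$ then reads $d-m_1-m_j\geq m_2$, which is the desired inequality $d-m_1-m_2\geq m_j$.

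For the converse, assume $d-m_1-m_2\geq m_j$. First I would rule out $j\in\{2,3\}$ using Noether's inequality $m_1+m_2+m_3\geq d+1$. If $j=2$, the hypothesis yields $d\geq m_1+2m_2$, which combined with Noether and $m_3\leq m_2$ forces $m_3\geq m_2+1$, a contradiction. If $j=3$, the hypothesis reads $d-m_1-m_2\geq m_3$, while Noether gives $m_3\geq d-m_1-m_2+1$, again a contradiction. Hence $j\geq 4$, so $k\geq 5$. With $j,k\geq 4$, the condition~(\ref{admissible}) must be checked over $t\neq 1,j,k$; the maximum of $m_t$ over such $t$ is $m_2$ (attained at $t=2$), so the single inequality $d-m_1-m_j\geq m_2$ — which is our hypothesis — implies~(\ref{admissible}) in full.

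It remains to verify the positivity $\nabla=d-m_1-m_j-m_k\geq 1$, which is where the main obstacle lies: the hypothesis only yields $\nabla\geq m_2-m_k\geq 0$, so one must rule out the boundary case $\nabla=0$. If $\nabla=0$, then $m_k=m_2$, and since multiplicities are non-increasing this forces $m_2=m_3=\cdots=m_k$; in particular $m_j=m_2$. The hypothesis then specialises to $d\geq m_1+2m_2$, while Noether's inequality together with $m_3=m_2$ gives $d\leq m_1+2m_2-1$, a contradiction. Hence $\nabla\geq 1$, and both parts of the definition of a $*$-seed are satisfied.
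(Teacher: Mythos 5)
Your proof is correct and follows the same overall strategy as the paper: Lemma~\ref{3first} for the forward direction, and for the converse you deduce condition~\eqref{admissible} from the hypothesis. In the converse the paper is slicker: it simply notes that $m_2\geq\max_{t\neq 1,j,k}m_t$ always holds, so $d-m_1-m_j\geq m_2$ immediately yields~\eqref{admissible} without any case analysis on $j$; your step of ruling out $j\in\{2,3\}$ via Noether's inequality is correct but not needed for that part. You have, however, added a worthwhile refinement that the paper leaves implicit: the paper relies on its earlier identification of ``admissible'' with ``seed'' and does not re-verify that $\nabla\geq 1$ in this lemma, whereas your boundary argument at $\nabla=0$ (where $m_k=m_2$ forces $m_2=\cdots=m_k$, hence $m_j=m_2$, and then $d\geq m_1+2m_2$ contradicts Noether's inequality) makes the argument self-contained.
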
 
 \begin{proof} Assume the triple is a seed, then by Lemma \ref{3first}, $m_2=max_{t\neq 1,j,k} m_t$ so Inequality (\ref{admissible}) proves the claim. Conversely,  if
   $$d-m_1-m_2\geq m_j,$$
   then
     $$d-m_1-m_j\geq m_2 \geq max_{t\neq 1,j,k} m_t$$
     because we always have $m_2\geq max_{t\neq 1,j,k} m_t$.
 So Inequality~\eqref{admissible} is satisfied, i.e.\ the triple $(*,m_j,m_k)$ is a seed.
 \end{proof}
 
\subsection{Tail and successive $*$-seeds} \label{par:definition_of_tail}
  Let $\bfx=(d; m_1, m_2, \cdots, m_r)$ be a homaloidal type. 
Define the {\bf{tail}} of $\bfx$ to be the sequence $(m_\ell, m_{\ell+1}, \ldots, m_r)$ of 
all multiplicities satisfying $m_\ell \leq d-m_1-m_2$. 

The tail of $\bfx$ is related to the parent $\bfx'$ of $\bfx$ in the following way. 
Write $\bfx'=(d'; m'_1, m'_2, \ldots)$ and let $(m'_i, m'_j,m'_k)$ be the corresponding seed. 
Then $m'_k= d-m_1-m_2$ and, in the transition from $\bfx'$ to $\bfx$, the multiplicities $m'_t$ with $t\geq k+1$ 
are kept unchanged in $\bfx$: this sequence of multiplicities $(m'_{k+1}, m'_{k+2}, \ldots)$
coincides with the tail of~$\bfx$.


 



 \begin{lem} \label{decreasing} 
  Let $(m_i,m_j,m_k)$ and $(*,m'_{j'},m'_{k'})$ 
   be two successive triples, for $\bfx_0$ and for its child $\bfx_1$ respectively, the first triple being a seed and the second being a $*$-triple. Then $(*,m'_{j'},m'_{k'})$ is a seed
  \begin{itemize}
  \item if and only if
 $$m_k \geq m'_{j'} \geq m'_{k'},$$
\item if and only if $m'_{j'},m'_{k'}$ are chosen in the tail of $\bfx_1$.  
\end{itemize}
In this case the tail of the child of $\bfx_1$ is obtained from the
tail of $\bfx_1$ by removing the first elements of the sequence up to
$m'_{k'}$.
 \end{lem}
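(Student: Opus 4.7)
The plan is to derive all three assertions from a single computational identity, together with Lemma~\ref{admissible12} and a careful use of the minimal-index convention for seeds.

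\smallskip

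\textbf{Step 1 (key identity $d_1-m'_1-m'_2=m_k$).} Write $\bfx_0=(d_0;m_1,\dots,m_{r_0})$ and $\nabla=d_0-m_i-m_j-m_k$, so $d_1=d_0+\nabla$. Because $(m_i,m_j,m_k)$ is an admissible seed, the admissibility condition~\eqref{admissible} gives $m_k+\nabla\geq m_t$ for every $t\notin\{i,j,k\}$; combined with $m_i\geq m_j\geq m_k$ this forces the two largest multiplicities of $\bfx_1$ to be exactly $m_i+\nabla$ and $m_j+\nabla$. Hence $m'_1=m_i+\nabla$, $m'_2=m_j+\nabla$, and
\[
d_1-m'_1-m'_2=(d_0+\nabla)-(m_i+\nabla)-(m_j+\nabla)=m_k.
\]

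\smallskip

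\textbf{Step 2 (the two equivalences).} Apply Lemma~\ref{admissible12} to $\bfx_1$: the $*$-triple $(*,m'_{j'},m'_{k'})$ is a $*$-seed of $\bfx_1$ if and only if $d_1-m'_1-m'_2\geq m'_{j'}$, i.e.\ $m_k\geq m'_{j'}$. Together with $m'_{j'}\geq m'_{k'}$ (which is built into the definition of a triple), this is the first stated chain of inequalities. Moreover, by the definition given in §\ref{par:definition_of_tail}, the tail of $\bfx_1$ is the sequence of multiplicities bounded by $d_1-m'_1-m'_2=m_k$. Since the multiplicities are listed in non-increasing order, $m'_{j'}$ and $m'_{k'}$ both belong to this tail precisely when $m_k\geq m'_{j'}\geq m'_{k'}$. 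The two equivalences follow simultaneously.

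\smallskip

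\textbf{Step 3 (tail of the grandchild).} Denote by $\bfx_2$ the child of $\bfx_1$ born from the $*$-seed $(m'_1,m'_{j'},m'_{k'})$, with increment $\nabla'=d_1-m'_1-m'_{j'}-m'_{k'}\geq 1$. Applying Step~1 to the transition $\bfx_1\to\bfx_2$ yields $d_2-m''_1-m''_2=m'_{k'}$, so the tail of $\bfx_2$ consists of those multiplicities of $\bfx_2$ that are $\leq m'_{k'}$. The three perturbed values $m'_1+\nabla'$, $m'_{j'}+\nabla'$, $m'_{k'}+\nabla'$ all strictly exceed $m'_{k'}$, so the tail of $\bfx_2$ is made exclusively of unchanged entries $m'_t$ (with $t\notin\{1,j',k'\}$) satisfying $m'_t\leq m'_{k'}$. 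The minimal-index convention ensures that the set of indices $t$ with $m'_t\leq m'_{k'}$ is exactly $\{t:t\geq k'\}$; removing the position $t=k'$ leaves the suffix $(m'_{k'+1},\dots,m'_{r_1})$, which is precisely the tail of $\bfx_1$ with its initial segment up to and including $m'_{k'}$ removed.

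\smallskip

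The only delicate point will be in Step~3: checking that the minimal-index convention truly forces $m'_t>m'_{k'}$ for every $t<k'$ with $t\neq j'$. This reduces to a short case split according to whether $m'_{j'}=m'_{k'}$ (in which case $k'=j'+1$ and the tied values are removed together) or $m'_{j'}>m'_{k'}$ (in which case $m'_{k'-1}>m'_{k'}$ directly); in both situations the tail of $\bfx_2$ is genuinely the suffix of the tail of $\bfx_1$ described in the statement.
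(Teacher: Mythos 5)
Your proof is correct and follows essentially the same route as the paper: the computational identity $d_1 - m'_1 - m'_2 = m_k$, the appeal to Lemma~\ref{admissible12}, and the observation that the minimal-index convention forces $m'_t > m'_{k'}$ for $t < k'$, $t \neq j'$. The only difference is cosmetic: you isolate the identity $d_1 - m'_1 - m'_2 = m_k$ as a reusable Step~1 (applying it twice, once for $\bfx_0\to\bfx_1$ and once for $\bfx_1\to\bfx_2$), and you spell out the case split $m'_{j'} = m'_{k'}$ versus $m'_{j'} > m'_{k'}$ that the paper compresses into the phrase ``the first occurrence of $m'_{k'}$, or the second one if $m'_{j'}=m'_{k'}$''.
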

 \begin{proof}
Since $(m_i,m_j,m_k)$ is a seed, the homaloidal type $\bfx_1$ is equal to 
 $$T_{i,j,k}(\bfx_0)=(d+\nabla;m_i+\nabla, m_j+\nabla, m_k+\nabla, m_1, \ldots,\widehat{m_i},  \ldots,\widehat{m_j}, \ldots,\widehat{m_k},  \ldots,m_r)   $$
 where elements with hats are omitted in the list. By Lemma \ref{admissible12}, $(m'_{1},m'_{j'},m'_{k'})$ is a seed if and only if
 $$(d+\nabla)-(m_i+\nabla)-(m_j+\nabla)\geq m'_{j'},$$
 if and only if
 $$d-m_i-m_j - (d -m_i-m_ j-m_k)\geq m'_{j'},$$
 if and only if
\begin{equation} \label{t1adm}
m_k\geq m'_{j'}.
\end{equation}
 Since $\nabla>0$, the value $m_k$ was removed once from the list of $T_{i,j,k}(\bfx_0)$. Thus, the previous inequality means that $m'_{j'}$ and $m'_{k'}$ have to be chosen in the tail
$$m_{k+1}, \ldots, m_r, 0, 0, \ldots$$
of $\bfx_1$, 
because by convention  the index $k$ was chosen as small as possible when the value $m_k$ appears several times in the set of multiplicities.
Conversely, if $m'_{j'},m'_{k'}$ are chosen from the tail of $\bfx_1$, 
Equation (\ref{t1adm}) is satisfied so $(*,m'_{j'},m'_{k'})$ is a seed. In this case,  let $\bfx_2$ be the child of $\bfx_1$ corresponding to this seed; by definition, the tail of $\bfx_2$ is obtained by removing the first elements of the tail of $\bfx_1$ up to the first occurrence of $m'_{k'}$, or the second one if $m'_{j'}=m'_{k'}$.
\end{proof}  
     
 As a consequence, a descending lineage $\bfx_0, \ldots,\bfx_\ell$ obtained by a sequence of $*$-seeds can be encoded by considering the tail $m_{k+1}, \ldots,0,  \ldots$ of $\bfx_0$, by selecting $2\ell$ values in this sequence, say $\mu_1\geq \mu'_1\geq \mu_2\geq \mu'_2 \geq  \ldots\geq \mu'_{\ell} \geq \mu'_{\ell}$, and then by choosing for $\mathbf{x_n}$ ($1\leq n \leq \ell$) the triple $(*,\mu_i,\mu'_i)$. Thus,  there are not so many possibilities for sequences $*$-births, as the tail shorten at each step by at least two elements. 
 
 \begin{eg} If we begin with 
$$\bfx_1=(80;43,31,27,26,26,21,21,18,17,2,2,2,1)$$
and try to list its possible descendants 
by $*$-seeds, we first look at its parent, 
$$\bfx_0=p(\bfx_1)=(59;26,26,22,21,21,18,17,10,6,2,2,2,1),$$ 
and the seed giving birth to $\bfx_1$ is $(22,10,6)$; thus, the remaining tail is $(2,2,2,1,0, \ldots)$.
The choices for $*$-seeds are 
\begin{enumerate} 
\item[(a)] $(*,2,2)$ with remaining tail $(2,1,0, \ldots )$, 
\item[(b)] or $(*,2,1)$,  $(*,2,0)$, $(*,1,0)$, $(*,0,0)$, each with the zero remaining tail $(0, \ldots)$.
\end{enumerate}
Unless we made the first choice $(*,2,2)$, the tail vanishes so the next choices of triple have to be $(*,0,0)$. If we did the first choice $(*,2,2)$, so that
$$\bfx_2=(113;76,35,35,31,27,26,26,21,21,18,17,2,1),$$
then the remaining tail is $(2,1,0 \ldots)$ so the next possible choices are $(*,2,1)$, $(*,2,0)$,
 $(*,1,0)$ and $(*,0,0)$, and each of them reduces the tail to zero, so the next choices of triple have to be $(*,0,0)$. So basically, there is only seven sequences of $*$-seeds originating from $\bfx_1$, which are given by  :
$$(*,2,2),(*,2,1),(*,0,0) \ldots \quad {\mathrm{ and }}  \quad (*,2,2),(*,2,0),(*,0,0) \ldots $$
$$ (*,2,2),(*,0,0)  \ldots    \quad {\mathrm{ and }}  \quad(*,2,1),(*,0,0) \ldots   $$
$$  (*,2,0),(*,0,0) \ldots   \quad {\mathrm{ and }}  \quad  (*,1,0),(*,0,0) \ldots $$
$$(*,0,0) \ldots$$
\end{eg}

\subsection{Successive $*$-triples and degrees}  
 In this section, we consider
 $\bfx_1, \ldots,\bfx_\ell$, a descending lineage such $d(\bfx_1)\geq 2$ and the corresponding (admissible) triples $(*,m_{j(n)}^{(n)},m_{k(n)}^{(n)})$ are all $*$-seeds. First, we can precisely estimate the increase in degree of this sequence, which is roughly linear in $\ell$:

\begin{lem}\label{degree_along_t1}
Let $\bfx_1, \ldots,\bfx_\ell$ be a lineage such that $d(\bfx_1)\geq 2$ and all corresponding seeds are $*$-seeds. Then
$$d(\bfx_\ell)\leq \left( \frac{2\ell+1}{3} \right) d(\bfx_1).$$
If moreover the ratio between the first multiplicity of $\bfx_1$ and its degree is at most $5/6$ then 
$$d(\bfx_\ell)\geq \max\left(\frac{\ell-13}6,1\right) d(\bfx_1).$$
\end{lem}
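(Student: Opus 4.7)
My plan is to exploit the invariant from Lemma~\ref{dmm1}: along any sequence of $*$-seeds, the quantity $c := d - m_1$ is preserved. Setting $D_n = d(\bfx_n)$ and $M_n = m_1(\bfx_n)$, this gives $D_n - M_n = c$ for every $n$, with $c = D_1 - M_1$. If the $n$-th $*$-seed is $(*, \mu_n, \mu_n')$ its increment is $\nabla_n = c - \mu_n - \mu_n'$, so telescoping yields
\[
D_\ell \;=\; D_1 + (\ell-1)\,c \;-\; \sum_{n=1}^{\ell-1} (\mu_n + \mu_n').
\]
Both bounds then reduce to controlling the two terms on the right hand side.

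For the upper bound I would simply use $\mu_n, \mu_n' \geq 0$ to get $D_\ell \leq D_1 + (\ell-1)c$. The Noether inequality~\eqref{eq:3m1>d}, valid since $d(\bfx_1) \geq 2$, gives $M_1 > D_1/3$ and hence $c < 2 D_1/3$. Substituting produces $D_\ell < \frac{2\ell+1}{3}\, D_1$, as required.

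For the lower bound I would first dispose of the range $\ell \leq 13$, where the asserted inequality reduces to $D_\ell \geq D_1$ and follows at once from the strict increase of degree along a descending lineage. For $\ell \geq 14$, the strategy is to bound $\sum (\mu_n + \mu_n')$ from above by the total sum of multiplicities of $\bfx_1$. Lemma~\ref{decreasing} says that each $*$-seed draws its pair $(\mu_n, \mu_n')$ from the current tail, and that the new tail is a strict suffix of the old one; iterating this observation shows that the $2(\ell-1)$ selected values sit at pairwise distinct positions of the tail $T_1$ of $\bfx_1$. Hence
\[
\sum_{n=1}^{\ell-1}(\mu_n + \mu_n') \;\leq\; \sum_{t \in T_1} t \;\leq\; \sum_{i} m_i(\bfx_1) \;=\; 3D_1 - 3
\]
by Noether's first equality~\eqref{NE}. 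Combining this with $c \geq D_1/6$, which is equivalent to the hypothesis $M_1 \leq 5 D_1/6$, yields $D_\ell \geq \frac{\ell-13}{6}\, D_1 + 3$, which is even slightly stronger than the stated estimate.

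The only delicate point I anticipate is justifying that successive $*$-seeds pick at pairwise distinct positions of the \emph{same} tail $T_1$ of $\bfx_1$, rather than at positions that might overlap through the intermediate reorderings of multiplicities. This is precisely what Lemma~\ref{decreasing} provides, by realizing the successive tails as nested suffixes of $T_1$. Everything else is arithmetic bookkeeping.
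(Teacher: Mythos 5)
Your proposal is correct and mirrors the paper's proof almost line for line: both hinge on the invariance of $d - m_1$ from Lemma~\ref{dmm1}, both derive the upper bound from the Noether inequality $m_1 > d/3$, and both bound $\sum (\mu_n + \mu_n')$ via Lemma~\ref{decreasing} together with Noether's first equality, finishing with the trivial $d(\bfx_\ell) \geq d(\bfx_1)$ to supply the $\max(\cdot,1)$. One small slip: your refined bound $D_\ell \geq \frac{\ell-13}{6} D_1 + 3$ is \emph{not} stronger than $D_\ell \geq D_1$ for $14 \leq \ell \leq 18$ and $D_1$ large, so you should invoke the trivial bound for the full range $\ell \leq 18$, not just $\ell \leq 13$; but since you already combine both ingredients, this is cosmetic and does not affect the validity of the argument.
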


When $m_1\leq 5d(\bfx_1)/6$, we shall say that  $\bfx_1$ does not have large first multiplicity, see Section~\ref{par:definition_large_small}.
\begin{proof}
 The degree increase between $\mathbf{x_n}$ and $\bfx_{n+1}$ is given by
$$\Delta(\bfx_{n+1})=d(\bfx_{n})-m^{(n)}_1-m^{(n)}_{j(n)}-
m^{(n)}_{k(n)},$$
and by Lemma \ref{dmm1}, the difference between the degree and the first value is constant along a sequence of $*$-births. So
$$\Delta(\bfx_{n+1})=\left( d(\bfx_{1})-m^{(1)}_1 \right) -m^{(n)}_{j(n)}-m^{(n)}_{k^(n)}.$$
Summing over $\ell$ gives  
\begin{align} 
 d(\bfx_\ell)-d(\bfx_1)& =\sum_{n=1}^{\ell-1} \Delta(\bfx_{n+1})\\
\label{eq1} & =(\ell-1)(d(\bfx_{1})-m^{(1)}_1) -\sum_{n=1}^{\ell-1} (m^{(n)}_{j(n)}+m^{(n)}_{k(n)}).
\end{align}
Using that $m^{(1)}_1>d(\bfx_{1})/3$, we get the desired upper bound
$$d(\bfx_\ell)-d(\bfx_1)\leq \frac23(\ell-1)d(\bfx_1).$$
This proves the first inequality; and this 
 is optimal since we can always choose the $m_{k(n)}^{(k)}$ and $m_{k(n)}^{(n)}$ to be $0$.

By the Lemma~\ref{decreasing}, the decreasing sequence
$$m_{j(1)}^{(1)},m_{k(1)}^{(1)},m_{j(2)}^{(2)},m_{k(2)}^{(2)}, \ldots,m_{j(\ell-1)}^{(\ell-1)},m_{k(\ell-1)}^{(\ell-1)}$$
is  extracted from the tail of $\bfx_1$.
Therefore, 
$$\sum_{n=1}^{\ell-1} (m^{(n)}_{j^{(n)}}+m^{(n)}_{k^{(n)}})\leq \sum_{p} m_p^{(1)} < 3d(\bfx_1)
$$
by  Noether's first equation (see Equation~\eqref{NE}). 
By Equation (\ref{eq1}), we obtain
$$d(\bfx_\ell)-d(\bfx_1) > (\ell-1)(d(\bfx_{1})-m^{(1)}_1) - 3d(\bfx_{1}).$$
and since $m_1^{(1)}\leq 5d(\bfx_1)/6$, this gives
$$d(\bfx_\ell)-d(\bfx_1) \geq \left(\frac16(\ell-1)-3 \right)d(\bfx_{1}).$$
Since $d(\bfx_\ell)\geq d(\bfx_1)$, this proves the second inequality.
\end{proof}

\subsection{Large seedbed and large progeny} The following proposition shows that a single homaloidal type with large seedbed $s(\bfx)$ produces many distinct descendants of small degrees. This proposition is at the heart of the proof of the lower bound on $N_d$. We prove it now to illustrate the mechanism of how sequences of $*$-births behave.
 
 \begin{pro} \label{lower_bound_children}
 Let $\bfx$ be a homaloidal type of seedbed $s(\bfx)$ and degree $d$.
Let $s\leq s(\bfx)$ be an integer. Then $\bfx$ has a least $2^s$ distinct descendants of degree $\leq \left(\frac{5s+10}3\right)d$.
 \end{pro}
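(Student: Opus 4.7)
I would prove the proposition by induction on $s$. The base case $s = 0$ is immediate because $\bfx$ is a descendant of itself of degree $d \le \frac{10}{3}d$. For the inductive step, given $\bfx$ with $s(\bfx) \geq s \geq 1$, the goal is to exhibit two distinct intermediate descendants $\bfy_1, \bfy_2$ of $\bfx$, obtained by one (or a short chain of) $*$-birth(s), such that $s(\bfy_i) \geq s-1$ and neither is an ancestor of the other --- so they lie in disjoint subtrees of the Hudson tree. Each $\bfy_i$ then produces $2^{s-1}$ distinct descendants by the inductive hypothesis, yielding $2 \cdot 2^{s-1} = 2^s$ distinct descendants of $\bfx$ overall.

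To produce the two intermediate descendants, I would use Lemma~\ref{admissible12}, which characterizes admissibility of a $*$-seed $(*, m_j, m_k)$ by the simple inequality $m_j \leq d - m_1 - m_2$, together with the tail-structure description of Lemma~\ref{decreasing}. The seedbed hypothesis $s(\bfx) \geq s \geq 1$ combined with the basic inequalities of Section~3 (notably Lemma~\ref{low_bou_m2m3}) should guarantee that the tail is nonempty and supports at least two distinct admissible pairs, giving two distinct $*$-children; by Lemma~\ref{lonely} both children have a lonely first multiplicity, which is convenient for tracking the seedbed through the induction.

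The degree is controlled via Lemma~\ref{degree_along_t1}: I would package the iterated inductive steps into a single descending lineage of $\ell$ $*$-births from $\bfx$, so that each resulting descendant has degree $\le \frac{2\ell+1}{3} d(\bfx)$; with on the order of two to three $*$-births per bifurcation, a choice $\ell \leq \frac{5s+9}{2}$ suffices, matching the claimed bound $\frac{5s+10}{3} d$.

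The main obstacle, in my view, is twofold. First, the seedbed can drop by more than one after a $*$-birth, because the inserted multiplicities $m_j + \nabla, m_k + \nabla$ may coincide with multiplicities already present in $\bfx$; ruling this out forces a careful selection of the seed or the insertion of additional preparatory $*$-births before bifurcating. Second, the degree constraint is tight in a genuinely additive way: a naive per-step multiplicative bound $d(\bfy_i) \leq \frac{5}{3} d(\bfx)$ compounded over $s$ inductive steps would produce an exponential-in-$s$ degree, not the required linear $\frac{5s+10}{3}$. One therefore has to apply Lemma~\ref{degree_along_t1} to the whole descending lineage at once rather than iterate it per induction level, and this requires an explicit bookkeeping of the $\nabla$'s at each bifurcation --- the technical heart of the proof.
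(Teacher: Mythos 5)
Your plan correctly identifies the three lemmas that drive the argument (Lemma~\ref{admissible12}, Lemma~\ref{decreasing}, Lemma~\ref{degree_along_t1}) and correctly anticipates both dangers (the seedbed can drop after a $*$-birth, and a per-step multiplicative degree bound would blow up exponentially). But the inductive framework you propose does not close over either obstacle, and in fact the paper avoids induction altogether precisely because of them.

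Concretely, the inductive step needs $s(\bfy_i) \geq s-1$ for both children $\bfy_1, \bfy_2$, and you do not establish this; the ``insert preparatory $*$-births'' remedy you mention would add to the degree at every level, reintroducing the compounding you are trying to avoid. Worse, your bifurcation step requires the tail of $\bfx$ to already support two distinct admissible pairs, but the tail (multiplicities $\leq d - m_1 - m_2$) can be empty --- e.g.\ when $m_1 + m_2 = d$ --- so without preparatory moves the induction does not even get started. Your suggestion to ``apply Lemma~\ref{degree_along_t1} to the whole lineage at once'' is the right instinct, but it is incompatible with induction on $s$: once you reason about the whole lineage globally, the recursive structure has disappeared and you need a different way to count.

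What the paper does instead is a direct, non-inductive parameterization. Fix once and for all $s$ indices $i_1,\dots,i_s$ of distinct non-zero multiplicities of $\bfx$. Perform three explicit preparatory births with seeds $(0,0,0), (0,0,0), (d,d,d)$ --- note these are \emph{not} $*$-births --- arriving at $\bfx_4 = (5d; (2d)^6, m_1,\dots,m_r)$, whose tail is exactly the full original multiplicity list $(m_1,\dots,m_r,0,\dots)$. Then, to each of the $2^s$ subsets $E \subseteq \{i_1,\dots,i_s\}$, associate a descendant $\bfy(E)$ by pairing off the values $(m_j)_{j\in E}$ (padding with a single $0$ if $|E|$ is odd) into at most $(s+1)/2$ successive $*$-seeds, all admissible by Lemma~\ref{decreasing}. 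The degree bound follows from one application of Lemma~\ref{degree_along_t1} starting at $\bfx_4$: $d(\bfy(E)) \leq \frac{2n+1}{3}\cdot 5d \leq \frac{5s+10}{3}d$. Injectivity of $E \mapsto \bfy(E)$ is not a tree-disjointness argument but comes from running Hudson's algorithm backwards: the sequence of seeds is recoverable from $\bfy(E)$, and since the selected multiplicities are distinct, so is $E$. This sidesteps the need to track intermediate seedbeds entirely --- only the seedbed of the original $\bfx$ matters.
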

 \begin{proof}
 Let $m_{i_1}> \ldots >m_{i_{s(\bfx)}}$ be indices corresponding to non-zero distinct multiplicities, chosen minimally.

 \vspace{0.1cm}

\noindent{\bf{Construction of descendants.--}} For each of the $2^s$ subsets $E$ of $\{i_1, \ldots,i_s\}$, we  construct a descendant $\mathbf{y}(E)$, by considering the following sequence of seeds. 

\vspace{0.1cm}
 
{\emph{First 3 (uniform) moves.-- }} The first three choices of triples are made so that further triples will be admissible, while having the tail $m_1, \ldots,m_r$. We begin with
 $$\bfx_1=\bfx=(d;m_1, \ldots ,m_{r}),$$
 and the children we shall construct will be labeled $\bfx_i$, $i=2, 3, \ldots$.
We first use the triple: $(0,0,0)$. It is a seed by Inequality (\ref{admissible}), and  $\Delta(\bfx_2)=d$. We obtain
 $$\bfx_2=(2d;d,d,d,m_1, \ldots,m_{r}),$$
Now, we use the seed $(0,0,0)$, with $\Delta(\bfx_3)=2d$, to get
 $$\bfx_3=(4d;2d,2d,2d,d,d,d,m_1, \ldots,m_{r}).$$
Next, we choose the triple $(d,d,d)$, it is a seed by (\ref{admissible}) because $4d-d-d\geq 2d$, and $\Delta(\bfx_4)=d$; so
 $$\bfx_4=(5d;(2d)^6,m_1, \ldots,m_{r}),$$
where the exponent $6$ indicates repetition. Now the tail of $\bfx_4$ is precisely $(m_1, \ldots,m_{r},0, \ldots)$, and in the next step we start choosing $*$-seeds from this sequence.
 
\vspace{0.1cm}

{\emph{Sequence of moves associated to $E$.-- }}  We choose a subset $E\subset \{i_1, \ldots,i_s\}$, which will act as a parameter, to construct a homaloidal type $\mathbf{y}(E)$.
Consider the values $(m_j)_{j\in E}$; in the case where $E$ has odd cardinality, we add a $0$ at the end of these values. Then we order them
with least possible indices to obtain a subsequence $m_{j_1}>\ldots >m_{j_{2n}}$ of even length, and we consider the sequence of triples
 $$(*,m_{j_1},m_{j_2}),(*,m_{j_3},m_{j_4}), \ldots,(*,m_{j_{2n-1}},m_{j_{2n}}).$$
 By construction,  $m_{j_1},m_{j_2}$ are in the tail of $\bfx_4$, hence  $(*,m_{j_1},m_{j_2})$ is admissible by Lemma \ref{admissible12}.
 Now $(m_{j_3},m_{j_4})$ are also chosen from this tail so $(*,m_{j_3},m_{j_4})$ is, again, an admissible triple for $\bfx_5$. By induction, the whole sequence  of triples
 $$(*,m_{j_1},m_{j_2}), (*,m_{j_3},m_{j_4}), \ldots,(*,m_{j_{2n-1}},m_{j_{2n}})$$
  starting from $\bfx_4$ is a sequence of $*$-seeds and defines a final homaloidal type 
$\mathbf{y}(E):=\bfx_{4+n}$.
 
 \vspace{0.1cm}

\noindent{\bf{Degree estimate, injectivity, and conclusion.--}} First, let us estimate the degree of $\bfy(E)$. By Lemma \ref{degree_along_t1},
$$d(\mathbf{y}(E))\leq \left (\frac{2 n +1}3 \right)d(\mathbf{x_4}).$$
Since $n\leq (s+1)/2$ by construction, we get
$$d(\mathbf{y}(E))\leq \left (\frac{s+2}3\right)5d.$$
Second, let us show that the association $E\mapsto \mathbf{y}(E)$ is injective: given $\mathbf{y}(E)$, we can recover the whole sequence 
  $$(*,m_{j_1},m_{j_2}),(*,m_{j_3},m_{j_4}), \ldots,(*,m_{j_{2n-1}},m_{j_{2n}}),$$
in reverse order by applying Hudson's algorithm to it, until we arrive to $\bfx_4$. Since the numbers $(m_{j_k})_{0\leq k \leq 2n}$ are distinct, the corresponding indices $j_{p},j_{p+1}$ are uniquely determined, and $E$ is the set of such indices.
Since this map is injective, the conclusion follows from the degree estimate. 
\end{proof}

 \section{Constructing homaloidal types of large seedbed}
 
 \vspace{0.2cm}
\begin{center}
\begin{minipage}{12cm}
{\sl{ In this section, we construct homaloidal types of large seedbed, and use them to deduce the lower bound in Theorem~\ref{lengthestimate} and Theorem~B.}}
\end{minipage}
\end{center}
\vspace{0.05cm}

\subsection{Splitting blocks}

Recall that a   block in $(d;m_1, \ldots,m_r)$ is a maximal set of identical nonzero values $m_i=m_{i+1}= \ldots=m_j\neq 0$, and its width is the number of its elements. 

 We will produce homaloidal types of large seedbed by beginning with a de Jonqui\`eres homaloidal type $(2^{N-1}+1;2^{N-1},1^{(2^N)})$, which has one block of  width $2^N$. Such a de Jonqui\'eres homaloidal type is  obtained from the root $(1;0)$ by a series of $2^{N-1}$  $*$-births. 

 The following lemma shows that if we begin with an $\bfx$ that has long blocks, then we can split each block in two while controlling the increase in degree. 

 \begin{lem} \label{splitting_blocks}
Let $k \geq 2$ and $n \geq 1$ be integers. Let $\bfx$ be a proper homaloidal type that contains at least $n$ distinct blocks of width $\geq 2^k$ each. Then there exists a descendant of $\bfx$ of degree $\leq 5  n2^{k}d(\bfx)$ with at least $2n$ distinct blocks of width $\geq 2^{k-1}$.
 \end{lem}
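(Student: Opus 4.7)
The plan is to construct the required descendant in two phases: a short \emph{inflation} phase to prepare the type, followed by a \emph{splitting} phase that processes the $n$ blocks one at a time. The key mechanism throughout is the observation that when a $*$-seed $(*,\mu,\mu)$ is applied repeatedly, both $d-m_1$ (by Lemma~\ref{dmm1}) and the value $\mu$ are preserved, so the quantity $\nabla = (d-m_1) - 2\mu$ is constant across successive iterations. Consequently the two new multiplicities deposited at each iteration all sit at the \emph{same} level $\mu + \nabla$, accumulating into a single new block.

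First, following the device used in the proof of Proposition~\ref{lower_bound_children}, apply the seeds $(0,0,0)$, $(0,0,0)$, $(d,d,d)$ in succession to obtain
$$
\bfx_4 \;=\; (5d;\, (2d)^6, m_1, m_2, \ldots, m_r).
$$
In $\bfx_4$ the tail threshold $d_{\bfx_4}-m_1-m_2$ equals $d$, so every original multiplicity of $\bfx$ (in particular all the $n$ block values $\mu_1 > \mu_2 > \cdots > \mu_n$) lies in the tail of $\bfx_4$. Moreover $d-m_1 = 3d$ in $\bfx_4$, and this value will be preserved by every subsequent $*$-seed.

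Next, process the blocks in decreasing order of level. For $i = 1, \ldots, n$, apply the $*$-seed $(*,\mu_i,\mu_i)$ exactly $2^{k-2}$ times in a row (this is at least one iteration since $k \geq 2$). By the observation above, each such seed has the same value $\nabla_i = 3d - 2\mu_i$, so the $2^{k-2}$ iterations together remove $2 \cdot 2^{k-2} = 2^{k-1}$ copies of $\mu_i$ from the tail and deposit a \emph{single} new block of $2^{k-1}$ copies at the level $3d - \mu_i$; the remnant at level $\mu_i$ is a block of width $\geq 2^k - 2^{k-1} = 2^{k-1}$. Admissibility is checked via Lemma~\ref{admissible12}: one verifies by direct computation that, at the start of processing block $i$, the current tail threshold equals $\mu_{i-1}$ (or $d$ for $i=1$), then drops to $\mu_i$ after the first application and stays there, so the condition $\mu_i \leq d_{\rm cur} - m_1^{\rm cur} - m_2^{\rm cur}$ is always satisfied. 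Crucially, the processing of block $i$ modifies only $m_1$ and the $\mu_i$-multiplicities, leaving the blocks at $\mu_j$ for $j > i$ (untouched originals) and the blocks at $3d - \mu_l$ for $l < i$ (previously deposited) intact.

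Finally, we count blocks and estimate the degree. After all $n$ rounds the descendant contains $n$ surviving original blocks at levels $\mu_1, \ldots, \mu_n$ of widths $\geq 2^{k-1}$, together with $n$ new blocks at levels $3d - \mu_1, \ldots, 3d - \mu_n$ of width $2^{k-1}$. These $2n$ levels are pairwise distinct: the $\mu_i$'s are by hypothesis, the $3d - \mu_i$'s by injectivity, and $\mu_j \leq m_1 \leq d < 2d < 3d - \mu_i$ where the strict inequality $\mu_j < d$ follows from Lemma~\ref{low_bou_m2m3} ($\mu_j = d$ would force $m_2 = 0$, incompatible with a block of width $\geq 2$ at level $d$). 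For the degree, each round increases $d$ by $2^{k-2} \nabla_i \leq 3 \cdot 2^{k-2} d$, yielding a final degree at most
$$
5d + 3n \cdot 2^{k-2} d \;=\; d\bigl(5 + 3n \cdot 2^{k-2}\bigr) \;\leq\; 20 n \cdot 2^{k-2} d \;=\; 5n \cdot 2^k d,
$$
where the middle inequality reduces to $5 \leq 17 n \cdot 2^{k-2}$, clearly true for $n \geq 1$ and $k \geq 2$. The delicate point of the argument is the bookkeeping for admissibility: one must track precisely how the tail evolves across successive $*$-seeds via Lemma~\ref{decreasing}, in order to confirm that the $\mu_i$-block is still fully present in the tail when processing reaches it and that the deposited blocks do not get accidentally destroyed by subsequent operations.
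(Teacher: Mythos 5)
Your proof follows the same strategy as the paper: the preliminary three-move inflation via $(0,0,0),(0,0,0),(d,d,d)$, then processing the $n$ blocks in decreasing order of level by applying $(*,\mu_i,\mu_i)$ exactly $2^{k-2}$ times each, and finally the observation that the $2n$ resulting levels $\mu_1>\cdots>\mu_n$ and $3d-\mu_n>\cdots>3d-\mu_1$ are pairwise distinct. The only divergence is in the degree estimate: you bound each $\nabla_i=3d-2\mu_i$ by $3d$ directly, whereas the paper invokes Lemma~\ref{degree_along_t1}; both give the stated bound $5n2^{k}d$, so this is a cosmetic difference. Your proof is correct.
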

 \begin{proof} Let $d=d(\bfx)$. By assumption, $\bfx$ can be written
 $$\bfx=(d;\square, \ldots,\square,(\mu_1)^{2^k},\square, \ldots,\square,(\mu_2)^{2^k},\square\ldots\square,(\mu_n)^{2^k},\square,\ldots,\square)$$
 where the exponents $2^k$ indicate repetition, with $\mu_1>\mu_2> \ldots>\mu_n$, and $\square$ indicates values that will be ignored. The number of occurrences of one of these $\mu_i$ can, of course, be larger than~$2^k$. As in the proof of Proposition~\ref{lower_bound_children}, we first apply the 3 admissible triples $(0,0,0)$, $(0,0,0)$, $(d,d,d)$ to obtain
$$(5d;(2d)^6,\square, \ldots,\square,(\mu_1)^{2^k},\square, \ldots,\square,(\mu_2)^{2^k},\square\ldots\square,(\mu_n)^{2^k},\square, \ldots,\square).$$
This is done in order to insure that the tail is now the full initial sequence of multiplicities of $\bfx$, at the price of a factor 5 in the degree.
We now apply $(*,\mu_1,\mu_1)$, which is admissible since $(\mu_1,\mu_1)$ is in the tail. We get $\nabla_1=3d-2\mu_1$, and a new homaloidal type
\begin{align*}
(5d+\nabla_1; 2d+\nabla_1, (3d-\mu_1)^{2}, (2d)^5, & \square \ldots\square,(\mu_1)^{2^k-2},\square\ldots\square,(\mu_2)^{2^k}, \\
& \square \ldots\square,(\mu_n)^{2^k},\square \ldots\square)
\end{align*}
and we may continue with $(*,\mu_1,\mu_1)$ until it has been applied $2^{k-2}$ times. Each application has the same $\nabla=\nabla_1$, and we get
\begin{align*}
(5d+2^{k-2}\nabla_1; 2d+2^{(k-2)}\nabla_1, (3d-\mu_1)^{2^{k-1}}, (2d)^5, & \square, \ldots,\square,(\mu_1)^{2^{k-1}}, \\
& \square, \ldots,\square,(\mu_2)^{2^k},\square \ldots)
\end{align*}
Now we do the same with $(*,\mu_2,\mu_2)$, again applied $2^{k-2}$ times.
 This time $\nabla_2=3d-2\mu_2$,  so we get the new homaloidal
 type
\begin{align*}
(  5d+2^{k-2} & \nabla_1+2^{k-2}\nabla_2;  \;  2d+2^{k-2}\nabla_1+ 2^{k-2}\nabla_2,  (3d-\mu_2)^{2^{k-1}},(3d-\mu_1)^{2^{k-1}}, \\
& (2d)^5, 
 \square \ldots\square,(\mu_1)^{2^{k-1}},\square \ldots\square,(\mu_2)^{2^{k-1}},\square \ldots \square,
(\mu_n)^{2^k},\square \ldots \square)
\end{align*}
Continuing this way for the values $\mu_3, \ldots,\mu_n$, we obtain
something of the form
\begin{align*}
(d'; m_1',(3d-\mu_n)^{2^{k-1}}, \ldots,(3d-\mu_1)^{2^{k-1}}, & \square \ldots \square,(\mu_1)^{2^{k-1}},  \\
& \square\ldots\square,(\mu_n)^{2^{k-1}}, \square \ldots \square),
\end{align*}
which has $2n$ distinct blocks of width $\geq 2^{k-1}$. The blocks are distinct since $3d-\mu_n> \ldots>3d-\mu_1>d>\mu_1> \ldots>\mu_n$. 
By Lemma \ref{degree_along_t1}, the degree $d'$ obtained after  this sequence of $\ell-1=n2^{k-2}$ $*$-seeds is at most
$$d'\leq \left(\frac{n2^{k-1}+3}{3}\right)(5d)\leq 5n2^{k}d.$$
  \end{proof}

\subsection{Lower bound on length increase}

  Here we prove the lower bound in Theorem \ref{lengthestimate}, which we recall in the next proposition. 

\begin{pro} \label{lowerboundlength}
 There exists a sequence of proper homaloidal types $(\bfy_N)_{N\geq 1}$ of degrees $d(\bfy_N)\leq 2^{N^2}10^N$, such that $s(\bfy_N)\geq 2^{N-1}$. In particular, for any $c<\sqrt{\ln(2)}$ and $N$ large enough,
$$s(\bfy_N)\geq \exp(c\sqrt{\ln(d(\bfy_N))})$$
\end{pro}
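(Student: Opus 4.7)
The plan is to build $\bfy_N$ by repeatedly applying Lemma~\ref{splitting_blocks} to a carefully chosen starting homaloidal type, then read off the degree and seedbed estimates by induction.

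For the starting point, I would take the de Jonquières type $\bfy_N^{(0)}=(2^{N-1}+1;\,2^{N-1},\,1^{2^N})$; this is a proper homaloidal type of degree $\leq 2^N$ that contains one block of width $2^N$ (the block of $1$'s). The recursive step is then straightforward: assume we have produced a descendant $\bfy_N^{(j)}$ with at least $n_j=2^j$ distinct blocks of width $\geq 2^{N-j}$ each. Applying Lemma~\ref{splitting_blocks} with $n=2^j$ and $k=N-j$ yields a descendant $\bfy_N^{(j+1)}$ with $n_{j+1}=2^{j+1}$ distinct blocks of width $\geq 2^{N-j-1}$, and its degree satisfies
\begin{equation*}
d(\bfy_N^{(j+1)})\;\leq\;5\cdot 2^j\cdot 2^{N-j}\cdot d(\bfy_N^{(j)})\;=\;5\cdot 2^N\cdot d(\bfy_N^{(j)}).
\end{equation*}
Iterating this for $j=0,1,\dots,N-2$, I set $\bfy_N:=\bfy_N^{(N-1)}$. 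The $2^{N-1}$ distinct blocks produced at the last step correspond to $2^{N-1}$ distinct nonzero multiplicities, so by definition of the seedbed we immediately get $s(\bfy_N)\geq 2^{N-1}$.

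For the degree bound, multiplying the bounds of the recursive step gives
\begin{equation*}
d(\bfy_N)\;\leq\;d(\bfy_N^{(0)})\cdot(5\cdot 2^N)^{N-1}\;\leq\;2^N\cdot 5^{N-1}\cdot 2^{N(N-1)}\;\leq\;2^{N^2}\cdot 10^N,
\end{equation*}
which is the claimed inequality.

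Finally, for the asymptotic inequality $s(\bfy_N)\geq \exp(c\sqrt{\ln d(\bfy_N)})$: taking logarithms in the degree bound gives $\ln d(\bfy_N)\leq N^2\ln 2+N\ln 10$, so $\sqrt{\ln d(\bfy_N)}\leq N\sqrt{\ln 2}\,(1+o(1))$ as $N\to\infty$. On the other hand $\ln s(\bfy_N)\geq (N-1)\ln 2$, hence
\begin{equation*}
\frac{\ln s(\bfy_N)}{\sqrt{\ln d(\bfy_N)}}\;\geq\;\frac{(N-1)\ln 2}{N\sqrt{\ln 2}\,(1+o(1))}\;\longrightarrow\;\sqrt{\ln 2},
\end{equation*}
so for any $c<\sqrt{\ln 2}$ and $N$ large enough the desired inequality holds. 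The only thing I need to be careful about is checking that Lemma~\ref{splitting_blocks} is applicable at each step---in particular that the blocks produced at step $j$ are genuinely distinct and of the stated width. This was already built into the statement of that lemma (the new blocks of the form $3d-\mu_i$ are pairwise distinct from each other and from the surviving $\mu_i$-blocks), so the induction goes through without trouble. The only mild subtlety is the base case $N=1$, where the claim $s(\bfy_1)\geq 1$ is trivial (take $\bfy_1=(2;1^3)$), and for $N=2$ one application of the splitting lemma is enough.
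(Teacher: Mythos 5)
Your proposal is correct and follows essentially the same route as the paper: the same de Jonquières starting type $(2^{N-1}+1;\,2^{N-1},1^{2^N})$, the same $N-1$ applications of Lemma~\ref{splitting_blocks} with parameters $n=2^i$, $k=N-i$, and the same degree and seedbed bookkeeping leading to $d(\bfy_N)\leq 2^{N^2}10^N$ and $s(\bfy_N)\geq 2^{N-1}$. The concluding asymptotic computation matches the paper's as well.
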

\begin{proof}
 Let $N\geq 1$ be an integer parameter. We begin by constructing recursively an auxiliary finite sequence of homaloidal types $(\bfx_{0,N},\ldots,\bfx_{N-1,N})$ whose last element will be $\bfy_N$.
  If we start with the de Jonqui\`eres homaloidal type 
 $$\bfx_{0,N}:=(2^{N-1}+1; \; 2^{N-1},(1)^{2^N})$$
 which has $1$ block of width $2^{N}$, we can apply Lemma~\ref{splitting_blocks} successively $N-1$ times with $n=2^i$ and $k=N-i$, where $i=0, \ldots,N-2$. Doing so, we  obtain a sequence of homaloidal types 
 $(\bfx_{i,N})=(\bfx_{0,N}, \ldots,\bfx_{N-1,N})$ of degrees $d_{0,N}=2^{N-1}+1, d_{1,N},   \ldots, d_{N-1,N}$ such that each $\bfx_{i,N}$ has $2^{i}$ distinct blocks of width $\geq 2^{N-i}$. The successive degrees satisfy the inequality
 $$d_{i+1,N}\leq 5 \cdot 2^{N-i} \cdot 2^i \cdot d_{i,N}=5 \cdot 2^N\cdot d_{i,N}.$$
 At the end, 
 \begin{align}
 d_{N-1,N} & \leq 5^N 2^{N^2}(2^{N-1}+1)\\
 & \leq 2^{N^2}10^N.
 \end{align}
 We now define the homaloidal type $\bfy_N:=\bfx_{N-1,N}$, so $d(\bfy_N)\leq 2^{N^2}10^N$. Its seedbed $s(\bfy_{N})$ is larger than the number $2^{N-1}$ of blocks of width $2$ that have been created.
The last statement follows from the fact that when $N$ is large, we have
 \begin{equation}
 N\geq \sqrt{\frac{\ln(d(\bfy_N))}{\ln(2)}} \left(1+o(1)\right),
 \end{equation} 
 and the seedbed satisfies
 \begin{equation}
  s(\bfy_{N})\geq 2^{N-1}=\exp(N\ln(2)+O(1)).
 \end{equation}
 \end{proof}
 
\subsection{Lower bound on the number of proper homaloidal types}\label{par:proof_of_lower_bound_Nd}
 
 Combining Proposition~\ref{lower_bound_children} and~\ref{lowerboundlength}, we now prove the lower bound of Theorem \ref{mainthm}.
\begin{thm} 
Let $N_d$ be the number of proper homaloidal types of degree $d$, and $N_{\leq d}$ the number of proper homaloidal types of degree $\leq d$.
Then
 $$\sqrt{\ln(2)}\leq \liminf_{d\to \infty} \frac{\ln \ln N_{\leq d}}{\sqrt{\ln d}},$$
and for every $\beta<\sqrt{\ln(2)}$, 
there exists arbitrary large degrees $d$ for which
 $$\ln \ln N_d\geq \beta \sqrt{\ln d}.$$
\end{thm}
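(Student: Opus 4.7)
The plan is to feed the family $(\bfy_N)_{N\geq 1}$ constructed in Proposition~\ref{lowerboundlength} into the progeny estimate of Proposition~\ref{lower_bound_children}, and then to convert the resulting subsequential lower bound into a bound that holds along all large $d$ by monotonicity. First, I fix $N\geq 1$ and apply Proposition~\ref{lower_bound_children} to $\bfx=\bfy_N$ with $s=s(\bfy_N)\geq 2^{N-1}$. This produces at least $2^{2^{N-1}}$ pairwise distinct homaloidal types of degree at most $\tfrac{5\cdot 2^{N-1}+10}{3}\,d(\bfy_N)$, which by Proposition~\ref{lowerboundlength} is bounded by some $D_N:=2^{N^{2}+O(N)}$. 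Consequently
\[
N_{\leq D_N}\;\geq\; 2^{2^{N-1}},
\]
while $\sqrt{\ln D_N}=N\sqrt{\ln 2}\bigl(1+o(1)\bigr)$ and $\ln\ln N_{\leq D_N}\geq (N-1)\ln 2+\ln\ln 2$.

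Dividing gives
\[
\frac{\ln\ln N_{\leq D_N}}{\sqrt{\ln D_N}}\;\geq\;\sqrt{\ln 2}\;\bigl(1-o(1)\bigr),
\]
which already yields the $\limsup$ (and, for the subsequence $D_N$, the limit). To promote this to the $\liminf$ for all large $d$, I would use that $d\mapsto N_{\leq d}$ is nondecreasing. If $D_{N-1}<d\leq D_N$, then $N_{\leq d}\geq N_{\leq D_{N-1}}\geq 2^{2^{N-2}}$ and $\sqrt{\ln d}\leq \sqrt{\ln D_N}=N\sqrt{\ln 2}(1+o(1))$, so
\[
\frac{\ln\ln N_{\leq d}}{\sqrt{\ln d}}\;\geq\;\frac{(N-2)\ln 2+O(1)}{N\sqrt{\ln 2}(1+o(1))}\;\xrightarrow[N\to\infty]{}\;\sqrt{\ln 2},
\]
which is exactly the first asserted inequality.

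For the statement about $N_d$ itself (rather than $N_{\leq d}$), I would pigeonhole the $2^{2^{N-1}}$ descendants among the at most $D_N$ possible degrees: some degree $d_N\leq D_N$ supports at least
\[
N_{d_N}\;\geq\;\frac{2^{2^{N-1}}}{D_N}\;=\;2^{\,2^{N-1}-N^{2}-O(N)}.
\]
Because $2^{N-1}$ dominates $N^{2}$, we have $\ln\ln N_{d_N}\geq (N-1)\ln 2\bigl(1-o(1)\bigr)$, while still $\sqrt{\ln d_N}\leq N\sqrt{\ln 2}(1+o(1))$. Hence $\frac{\ln\ln N_{d_N}}{\sqrt{\ln d_N}}\geq \sqrt{\ln 2}(1-o(1))$ along the unbounded sequence $(d_N)$, so for each $\beta<\sqrt{\ln 2}$ the inequality $\ln\ln N_d\geq \beta\sqrt{\ln d}$ holds for infinitely many $d$.

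The only subtle point is the interplay between the three scales involved, $2^{N-1}$, $N^{2}$, and $N$: one must check that the extra factor $\tfrac{5s+10}{3}$ in the degree bound, together with the $10^N$ in $d(\bfy_N)$, contributes only lower-order terms, so that $\sqrt{\ln D_N}=N\sqrt{\ln 2}(1+o(1))$ exactly matches the rate $(N-1)\ln 2$ obtained by doubly-logarithming $2^{2^{N-1}}$. This is the step where the constant $\sqrt{\ln 2}$ arises, and it is the only genuine calculation; once this accounting is in place, the rest is bookkeeping.
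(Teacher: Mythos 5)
Your argument is correct. The first half (the bound on $N_{\leq d}$) follows the same path as the paper: feed $(\bfy_N)$ from Proposition~\ref{lowerboundlength} into Proposition~\ref{lower_bound_children} and then interpolate between the resulting degree thresholds. The paper chooses the index $k$ directly from $d$ via the inequality $\frac{\beta}{\ln 2}\sqrt{\ln d}-1<k\le\frac{\beta}{\ln 2}\sqrt{\ln d}$, which sidesteps any need to discuss whether the thresholds are monotone; your sandwiching $D_{N-1}<d\le D_N$ works too, but strictly speaking you should take $N=\min\{M:D_M\ge d\}$ (or replace $D_N$ by its explicit majorant $\overline D_N:=\frac{5\cdot 2^{N-1}+10}{3}\cdot 2^{N^2}10^N$, which is plainly increasing), since the sequence $D_N$ as literally defined is not a priori monotone. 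This is cosmetic and does not affect the estimate, because all you need is the upper bound $\sqrt{\ln d}\le\sqrt{\ln\overline D_N}=N\sqrt{\ln 2}(1+o(1))$ and the lower bound $N_{\le d}\ge 2^{2^{N-2}}$.

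Where you genuinely diverge from the paper is the second assertion, about $N_d$ rather than $N_{\leq d}$. The paper argues by contradiction: assuming $\ln\ln N_d<\beta\sqrt{\ln d}$ for all large $d$ would force $N_{\leq D}\le D\exp(\exp(\beta\sqrt{\ln D}))=o(\exp(\exp(c\sqrt{\ln D})))$ for any $c\in(\beta,\sqrt{\ln 2})$, contradicting the already-established lower bound on $N_{\leq d}$. You instead give a direct pigeonhole argument: the $\ge 2^{2^{N-1}}$ descendants are distributed over at most $D_N$ degrees, so some $d_N\le D_N$ carries $N_{d_N}\ge 2^{2^{N-1}}/D_N=2^{\,2^{N-1}(1-o(1))}$, and $d_N\to\infty$ because $N_{d_N}\to\infty$ while $N_d$ is finite for fixed $d$. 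This is more constructive (it actually exhibits degrees with many types) and avoids the indirect argument; both proofs ultimately rest on the same arithmetic fact that $2^{N-1}$ dwarfs $N^2+O(N)$, i.e.\ that the number of descendants is doubly exponential while the number of degrees available to them is only singly exponential in $N$. Your accounting of the three scales $2^{N-1}$, $N^2$, $N$ is correct, so the proof goes through.
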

 \begin{proof}
  By Proposition~\ref{lowerboundlength}, there is a sequence of homaloidal types $\bfy_k$ of degree $d_k \leq 2^{k^2}10^k$, with seedbed $s_k\geq 2^{k-1}$. We choose the auxiliary parameter $s=2^{k-1}$, and apply  Proposition~\ref{lower_bound_children}. So $\bfy_k$ has at least $2^s$ children of degree $\leq D_k:=\left(\frac{5s+10}{3}\right)d_k$, so as $k$ tends to infinity,
  $$\ln(D_k)\leq \ln(s)+O(1)+\ln(d_k)\leq k^2 \ln 2 + k\ln(20)+O(1).$$
 so
 $$\sqrt{\ln(D_k)}\leq k \sqrt{\ln 2 +o(1)},$$

Let $\beta<\sqrt{\ln(2)}$, then for $k$ large enough, say $k\geq k_\beta$ for some $k_\beta>0$, we have
\begin{equation}\label{eq:ineqD_k_k}
\sqrt{\ln(D_k)} \leq k \frac{\ln(2)}\beta.
\end{equation}
 Let $d$ be a large integer. and choose the unique integer $k$ (that depends now on $d$) such that
 \begin{equation}\label{eq:def_d_k}
 \frac{\beta}{\ln(2)}\sqrt{\ln(d)}-1 < k \leq  \frac{\beta}{\ln(2)}\sqrt{\ln(d)}.
 \end{equation}
 Then if $d$ is large enough, $k$ is larger than $k_\beta$, so by \eqref{eq:ineqD_k_k} and \eqref{eq:def_d_k}, we have $D_k\leq d$.
 In particular, remembering that $\bfy_k$ has at least $2^{2^{k-1}}$ children of degree $\leq D_k$, we get
 $$2^{2^{k-1}}\leq N_{\leq D_k}\leq N_{\leq d}.$$
 Using \eqref{eq:def_d_k}, we obtain
 \begin{align}
 \ln \ln N_{\leq d} & \geq (k-1)\ln(2)+\ln \ln(2)\\
 & \geq \beta \sqrt{\ln(d)}+O(1).
 \end{align}
 Therefore, as $d$ tends to $+\infty$, we have
 $$\liminf_{d\to +\infty} \frac{\ln \ln N_{\leq d}}{\sqrt{\ln(d)}}\geq \beta,$$
 which is valid for all $\beta<\sqrt{\ln(2)}$, thus implying the lower bound:
 \begin{equation}\label{eq2}
 \liminf_{d\to +\infty} \frac{\ln \ln N_{\leq d}}{\sqrt{\ln(d)}}\geq \sqrt{\ln(2)}.
 \end{equation}

Now assume, by contradiction, that there exists $\beta<\sqrt{\ln(2)}$ such we have $\ln\ln N_d< \beta \sqrt{\ln d}$ for all $d$ sufficiently large, say $d\geq d_1$. Pick $c$ such that $\beta<c<\sqrt{\ln(2)}$.
 Then for $D$ sufficiently large so that $\max_{d<d_1} N_d<\exp(\exp(\beta \sqrt{\ln D}))$, we would have
\begin{align*}
 N_{\leq D}   & \leq D \exp\left( \exp(\beta \sqrt{\ln D}) \right),\\
 N_{\leq D} & =o\left(\exp\left(\exp(c\sqrt{\ln D}\right)\right),
\end{align*}
because $\beta < c$, and this would contradict the lower bound \eqref{eq2}. 
 \end{proof}

\subsection{Upper bounds on the seedbed increase} 
 In the previous paragraphs, we managed to produce homaloidal types of large seedbed; to do this, we could double the seedbed, at each step, along a sequence of $*$-births. The main point of this section is to prove that through such a sequence of $*$-births, the seedbed can at most be multiplied by $4$.  In the proof of Lemma \ref{splitting_blocks}, we could notice that with  a $*$-seed $(*,\mu,\mu)$ we might have created a new multiplicity value; however, applying the same type of seed $(*,\mu,\mu)$ a second time did not: it enlarged the width of an existing block, but did not create a new multiplicity. More formally, we have the following lemma. 

\begin{lem} \label{successive}
Let $\bfx_1,\bfx_2,\bfx_3$ be a lineage obtained from $\bfx_1$ by application of two  successive $*$-seeds  $(*,\mu,\mu)$, for some multiplicity $\mu$. Then
$$s(\bfx_3)\leq s(\bfx_2).$$
\end{lem}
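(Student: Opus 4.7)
My approach is to directly compute the effect of two successive $\ast$-seeds $(\ast,\mu,\mu)$ on the multiset of multiplicities, and then compare the sets of distinct non-zero multiplicities of $\bfx_2$ and $\bfx_3$.

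Write $\bfx_1=(d;m_1,\ldots,m_r)$, let $\nabla_1=\nabla_{1,j_1,k_1}(\bfx_1)=d-m_1-2\mu$, and observe that the first key identity is
\[
\nabla_2 \;=\; d(\bfx_2)-m_1^{(2)}-2\mu \;=\; (d+\nabla_1)-(m_1+\nabla_1)-2\mu \;=\; \nabla_1,
\]
which is just a restatement of Lemma~\ref{dmm1}: along a sequence of $\ast$-births the quantity $d-m_1$ is constant. Then I would write out $\bfx_2$ explicitly: the incremented values are $m_1^{(2)}=m_1+\nabla_1=d-2\mu$ and two new copies of $\mu+\nabla_1=d-m_1-\mu$; by Lemma~\ref{lonely} the value $m_1^{(2)}$ is lonely in $\bfx_2$. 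The remaining multiplicities of $\bfx_2$ are exactly those of $\bfx_1$ with two copies of $\mu$ removed.

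Next I would pass to $\bfx_3$. Because $\nabla_2=\nabla_1$, the second application of $(\ast,\mu,\mu)$ produces a new first multiplicity $m_1^{(3)}=m_1^{(2)}+\nabla_1=d-2\mu+\nabla_1$ together with two further copies of $\mu+\nabla_1$; again by Lemma~\ref{lonely}, $m_1^{(3)}$ is lonely. Crucially, the value $\mu+\nabla_1$ injected by the second $\ast$-birth already occurred in $\bfx_2$ with multiplicity~$2$, so the step $\bfx_2\to\bfx_3$ only enlarges an existing block (from width~$2$ to width~$4$) without introducing any new value.

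Comparing the sets of distinct non-zero multiplicities, the only possible changes are:
\begin{enumerate}
\item[(a)] the singleton value $m_1^{(2)}=d-2\mu$ disappears (it is consumed, and is a singleton by Lemma~\ref{lonely}), contributing $-1$;
\item[(b)] the value $\mu$ may disappear, if it occurred exactly twice in the tail of $\bfx_2$, contributing $-1$ or $0$;
\item[(c)] the new first multiplicity $m_1^{(3)}=d-2\mu+\nabla_1$ may be introduced, contributing at most $+1$;
\item[(d)] the value $\mu+\nabla_1$ is not new, as explained above, so it contributes $0$.
\end{enumerate}
Summing these contributions yields $s(\bfx_3)\leq s(\bfx_2)-1+1=s(\bfx_2)$, which is the desired inequality. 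The only delicate point, and the one I would check most carefully, is that $m_1^{(3)}$ is really the unique potential new value and that the bookkeeping in~(a)--(d) is exhaustive; all other multiplicities of $\bfx_3$ either coincide with multiplicities already present in $\bfx_2$ or are the untouched tail entries inherited from $\bfx_2$, so no other source of new distinct values exists.
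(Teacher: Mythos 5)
Your proof is correct and follows the same direct computational route as the paper: you compute both children explicitly, use that $\nabla_2=\nabla_1$ (equivalently, $d-m_1$ is constant along $*$-births), and compare the multisets of distinct nonzero multiplicities. In fact your bookkeeping is somewhat more careful than the paper's: the paper's displayed formula for $\bfx_3$ contains a typo (the first multiplicity is written as $d-2\mu$, but should be $m_1^{(2)}+\nabla_1 = 2d-m_1-4\mu$, and the degree should be $3d-2m_1-4\mu$), which leads the paper to assert that "no new multiplicity value is created" — literally false, since the new first multiplicity $2d-m_1-4\mu$ is strictly larger than every value in $\bfx_2$ and hence genuinely new. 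You correctly observe that this $+1$ is offset by the $-1$ coming from the disappearance of the old first multiplicity $d-2\mu$, which is a singleton by Lemma~\ref{lonely}; this makes items (a) and (c) exactly cancel, and the extra $-1$ or $0$ from (b) then gives $s(\bfx_3)\leq s(\bfx_2)$. Your check that the list (a)–(d) is exhaustive is also sound: every multiplicity of $\bfx_3$ is either inherited unchanged from $\bfx_2$, equal to $d-m_1-\mu$ (already present), or the new first multiplicity.
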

\begin{proof} 
We denote
$$\bfx_1=(d;m_1,m_2,m_3, \ldots, (\mu)^{4}, \ldots)$$
where the block of value $\mu$ is of width at least $4$. Since the assumption is that the triples $(*,\mu,\mu),(*,\mu,\mu)$ are seeds, we know that $(\mu)^{4}$ must be part of the tail of $\bfx_1$ (see Lemma~\ref{decreasing}). 
So, applying $(*,\mu,\mu)$, we get the first child
$$\bfx_2=(2d-m_1-2\mu; \; d-2\mu,(d-m_1-\mu)^{2},m_2,m_3, \ldots,(\mu)^{2}, \ldots)$$
Recall that by Lemma \ref{lonely}, the first multiplicity is then lonely.
We apply $(*,\mu,\mu)$ again, assuming this is an admissible triple. The increase in degree is the same as in the previous step, so we get
$$\bfx_3=(3d-2m_1-2\mu; \; d-2\mu,(d-m_1-\mu)^{4},m_2,m_3, \ldots),$$
where the first multiplicity is still lonely, and no new multiplicity value is created (but, maybe, the multiplicity $\mu$ disappeared). So in any case, $s(\bfx_3)\leq s(\bfx_2)$.
\end{proof}


\begin{pro}\label{s_bound_t1}
Let $\bfx_1, \ldots,\bfx_\ell$ be a descending lineage with $d(\bfx_1)\geq 2$, such that all corresponding seeds are $*$-seeds.
Then 
$$s(\bfx_\ell)\leq \min\left(4s(\bfx_1)-1, \, s(\bfx_1)+3(\ell-1) \right).$$
\end{pro}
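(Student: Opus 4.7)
My plan is to combine a crude per-step count with a structural bound on the set of multiplicities appearing in $\bfx_\ell$. Write $\bfx_n = (d_n; m_1^{(n)}, m_2^{(n)}, \ldots)$ and let $(*, \mu_n, \mu'_n)$ be the $*$-seed used at step $n$. By Lemma~\ref{dmm1}, the quantity $D := d_n - m_1^{(n)}$ is constant along the lineage. Unwinding the formula for $T_{1,j,k}$, passing from $\bfx_n$ to $\bfx_{n+1}$ amounts, as a multiset operation on multiplicities, to removing one copy each of $m_1^{(n)}, \mu_n, \mu'_n$ and adjoining one copy each of $m_1^{(n+1)} = d_n - \mu_n - \mu'_n$, $D - \mu'_n$, and $D - \mu_n$. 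The bound $s(\bfx_\ell) \leq s(\bfx_1) + 3(\ell-1)$ then follows immediately: at most three new distinct multiplicities appear per step, so $s(\bfx_{n+1}) \leq s(\bfx_n) + 3$, and iterating gives the estimate.

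For the sharper bound $s(\bfx_\ell) \leq 4 s(\bfx_1) - 1$, I would iterate the multiset description to describe $\bfx_\ell$ directly in terms of $\bfx_1$: modulo the values $m_1^{(n)}$ ($2 \leq n \leq \ell-1$) that are adjoined at step $n-1$ and removed at step $n$, the final $\bfx_\ell$ is obtained from $\bfx_1$ by removing one copy of $m_1^{(1)}$ and one copy of each $\mu_n, \mu'_n$, while adjoining $m_1^{(\ell)}$ together with all intermediate values $\{D - \mu_n,\ D - \mu'_n : 1 \leq n \leq \ell-1\}$. The key structural claim is that these intermediate values survive in the head all the way to $\bfx_\ell$: from the admissibility condition of Lemma~\ref{admissible12} we have $\mu_n \leq D - m_2^{(n)}$, and combining this with the obvious $m_2^{(n)} \geq \mu_n$ forces $\mu_n, \mu'_n \leq D/2$, whence $D - \mu_n, D - \mu'_n \geq D/2$; since the tail threshold of any later $\bfx_m$ is $\mu'_{m-1} \leq \mu'_n \leq D/2$, the intermediates stay in the head and are never picked as subsequent seeds.

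To close the count, observe that by Lemma~\ref{decreasing} each $\mu_n, \mu'_n$ lies in the tail of $\bfx_1$ (possibly equal to $0$). Let $t$ be the number of distinct positive values in the tail of $\bfx_1$ and $h$ the number of distinct values in its head; since head and tail occupy disjoint ranges of values, $s(\bfx_1) = h + t$, and $h \geq 1$ because $d_1 \geq 2$ forces $m_1^{(1)} \geq 1$. The set of distinct seed values $T := \{\mu_n, \mu'_n\}_n$ has at most $t + 1$ elements, so $\{D - \mu : \mu \in T\}$ contributes at most $t+1$ distinct multiplicities to $\bfx_\ell$; with at most one new value $m_1^{(\ell)}$ and at most $s(\bfx_1)$ persistent distinct values inherited from $\bfx_1$, I obtain
$$ s(\bfx_\ell) \;\leq\; 1 + (t+1) + s(\bfx_1) \;\leq\; 2 s(\bfx_1) + 1 \;\leq\; 4 s(\bfx_1) - 1, $$
using $t \leq s(\bfx_1) - 1$ and $s(\bfx_1) \geq 1$. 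The main obstacle is pinning down the persistence invariant (intermediates never re-enter the tail); once this is established the rest is routine bookkeeping on multisets.
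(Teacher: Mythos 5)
Your approach is genuinely different from the paper's, and in fact proves a \emph{sharper} bound. The paper argues step by step: it invokes Lemma~\ref{lonely} to say each $(*,\alpha_i,\alpha_{i+1})$ transition adds at most two new values, Lemma~\ref{successive} to say repeated $(*,\alpha_i,\alpha_i)$ blocks add at most one, and tallies $2(t-1)+t+1=3t-1$ with $t\leq s(\bfx_1)$. You instead telescope the multiset update $\bfx_{n+1}=\bfx_n\setminus\{m_1^{(n)},\mu_n,\mu'_n\}\cup\{m_1^{(n+1)},D-\mu'_n,D-\mu_n\}$ globally, cancel the intermediate lonely tops, and observe that every positive value of $\bfx_\ell$ must lie in the union of the original values of $\bfx_1$, the single new top $m_1^{(\ell)}$, and $\{D-\mu:\mu\in T\}$ with $|T|\leq t+1$. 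The paper's per-step count over-estimates precisely because consecutive transitions share a new value (both $(*,\alpha_i,\alpha_{i+1})$ and $(*,\alpha_{i+1},\alpha_{i+2})$ contribute $D-\alpha_{i+1}$), which your global description captures. The result, $s(\bfx_\ell)\leq s(\bfx_1)+t+2\leq 2s(\bfx_1)+1$, is strictly better than $4s(\bfx_1)-1$ for $s(\bfx_1)>1$, and the paper's heuristic remark explicitly notes that improving the constant $4$ would tighten the constant $2\sqrt{\ln 2}$ in the main theorem, so this observation is not merely cosmetic.

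Two small points. First, your ``persistence'' justification via the $D/2$ bound leaves an edge case: you show $D-\mu_n\geq D/2\geq\mu'_{m-1}$, but equality ($D-\mu_n=D/2=$ tail threshold) is not excluded by that chain. The cleaner and sufficient argument is the one you also cite: iterated Lemma~\ref{decreasing} says the tail of each $\bfx_m$ is a suffix of the tail of $\bfx_1$ consisting of original entries of $\bfx_1$, so the seed values $\mu_n,\mu'_n$ are always original tail entries and the telescoping multiset identity is exact — no separate ``never picked again'' lemma is required. Second, your inequality $t\leq s(\bfx_1)-1$ should be justified by noting that $m_1,m_2,m_3$ lie strictly above the tail threshold (Noether's inequality gives $m_3>d-m_1-m_2$), so the head contributes at least one distinct positive value; this is the same fact the paper uses in the guise ``$m_3$ is never in the tail.'' With these fixes, the proof is correct, and it establishes the stated proposition a fortiori.
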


{\noindent}{\bf{Heuristical Remark.}} The factor $4$ in  $s(\bfx_\ell)\leq 4s(\bfx_1)-1$ is directly related to the constant $2\sqrt{\ln(2)}=\sqrt{2 \ln(4)}$ in the upper bound given in Theorem~B. 
Heuristically, imagine we could produce homaloidal types in the spirit of Proposition \ref{lowerboundlength} by constructing a sequence $\bfy_0,\ldots, \bfy_n,\ldots $ of respective degrees $d_0, \ldots$,  $d_n, \ldots$ and seedbeds $s_0, \ldots, s_n, \ldots$, where 
$\bfy_{n+1}$ is obtained from $\bfy_n$ by a small, bounded number of birth followed by a long sequence of $*$-birth, of length $\ell_n$. Let's assume that the worst case scenario of seedbed increase is when the two upper bounds given by Proposition \ref{s_bound_t1} are equal, that is when $\ell_n\simeq s_n$, and that it is realized by our sequence $(\bfy_n)_n$. In this case, we would have $s_{n+1}\simeq 4s_n$, so $s_n \simeq 4^n$, and by Lemma \ref{degree_along_t1}, $d_{n+1}\simeq \ell_n d_{n}$ so $d_n \simeq 4^{\frac{n(n+1)}2}$. In this heuristic computation, we would get
$$\lim_{n\to +\infty} \frac{\ln(s_n)}{\sqrt{\ln(d_n)}}=\sqrt{2\ln(4)}.$$
Our task later will be to explain that this heuristic is, somehow, the worst case scenario authorized by Proposition \ref{s_bound_t1}, namely to show that for homaloidal types $\bfx$ of large degrees,
$$\limsup_{d(\bfx)\to +\infty} \frac{\ln(s(\bfx))}{\sqrt{\ln(d(\bfx))}}\leq\sqrt{2\ln(4)}.$$
Improving the factor $4$ would improve this upper bound.
More precisely, what may happen is that when one concatenates $m$ descending lineages that start and end in the region of average first multiplicity but otherwise are always given by $*$-births  in the region of large first multiplicity, then the obvious factor $4^m$ could possibly be replaced by a smaller quantity.

\begin{proof}[Proof of Proposition~\ref{s_bound_t1}]
 Denote by $\left(*,m_{j(n)}^{(n)},m_{k(n)}^{(n)}\right)$, $1\leq n\leq \ell-1$, the corresponding $*$-seeds. 
By Lemma \ref{decreasing}, the non-increasing sequence
\begin{equation}\label{eq:first_equation_of_proof}
m_{j(1)}^{(1)},m_{k(1)}^{(1)},m_{j(2)}^{(2)},m_{k(2)}^{(2)}, \ldots,m_{j(\ell-1)}^{(\ell-1)},m_{k(\ell-1)}^{(\ell-1)}
\end{equation}
is an extracted subsequence of the tail of $\bfx_1$. 

Let $\alpha_1> \ldots>\alpha_t$ be the values that appear in the sequence~\eqref{eq:first_equation_of_proof}. This means that the successive $*$-seeds must be of the form
$(*,\alpha_i,\alpha_i)$ or $(*,\alpha_i,\alpha_{i+1})$, and the indices must be nondecreasing. Note that each triple $(*,\alpha_i,\alpha_{i+1})$ can happen only once; and it may happen that $(*,\alpha_i,\alpha_i)$ appears several times, but always consecutively.
We have the obvious bound $t\leq s(\bfx_1)+1$ (taking care of the possibility that $\alpha_t=0$), but in this can be improved to $t\leq s(\bfx_1)$ because the third multiplicity $m_3$ of $\bfx_1$ is never in the tail of $\bfx_1$.

 At each of the $\ell-1$ steps, we change the values of three multiplicities, so clearly
$$s(\bfx_\ell)\leq s(\bfx_1)+3(\ell-1).$$
 
 When applying a triple of the form $(*,\alpha_i,\alpha_{i+1})$, one adds at most three new multiplicities. However, by Lemma \ref{lonely}, the first multiplicity is lonely whenever $n\geq 2$, so apart possibly from the first one, seeds of the form $(*,\alpha_i,\alpha_{i+1})$ add at most two new multiplicities each. When we apply successively $(*,\alpha_i,\alpha_{i})$ any number of times, by Lemma \ref{successive}, we may have added only one multiplicity the first time and none the others, unless it was $(*,\alpha_1,\alpha_1)$ and the first multiplicity wasn't lonely in $\bfx_1$. In the end,
$$s(\bfx_\ell)-s(\bfx_1)\leq 2(t-1)+t+1,$$
the final $+1$ counting the  possibility that $\bfx_1$ did not have a lonely first multiplicity. 
Since $t\leq s(\bfx_1)$, 
we obtain $s(\bfx_\ell)-s(\bfx_1)\leq 3s(\bfx_1) -1$, as desired.
\end{proof}
 
 From the above upper bound, we now derive an inequality on the seedbed increase that will be more suitable for iterative applications.

\begin{cor} \label{ineq_t1}
For any $\alpha>\sqrt{2\ln(4)}$, there exist $S_0(\alpha)>0$ and $L(\alpha)\geq 5$,  such that
\begin{equation}\label{fundamental_inequality}
(\ln s(\mathbf{x_{\ell+1}}) )^2-(\ln s(\mathbf{x_{1}}) )^2\leq \alpha^2 \left( \ln d(\bfx_{\ell+1})-\ln d(\bfx_{1}) \right)
\end{equation}
 for any descending lineage $\bfx_1, \ldots,\bfx_\ell,$ $\bfx_{\ell+1}$ such that
 \begin{enumerate} 
\item[(i)]  $s(\bfx_1)\geq S_0(\alpha)$, $\ell\geq L(\alpha)$, and $d(\bfx_1)\geq 2$; 
\item[(ii)]   $\bfx_1$ does not have large first multiplicity, i.e. $m_1(\bfx_1)\leq 5d(\bfx_1)/6$;
\item[(iii)]  all the corresponding seeds are $*$-seeds, except possibly the last one. 
 \end{enumerate}
 \end{cor}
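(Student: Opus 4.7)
The approach is to combine the seedbed control from Proposition~\ref{s_bound_t1} with the degree lower bound from Lemma~\ref{degree_along_t1}, and then rearrange into the claimed square-logarithmic form. Write $s_1 := s(\bfx_1)$ and $s_{\ell+1} := s(\bfx_{\ell+1})$. First, I would apply Proposition~\ref{s_bound_t1} to the sub-lineage $\bfx_1,\dots,\bfx_\ell$, whose $\ell-1$ intermediate seeds are all $*$-seeds by hypothesis (iii); this yields $s(\bfx_\ell) \leq \min(4s_1 - 1,\, s_1 + 3(\ell-1))$. The final step $\bfx_\ell \to \bfx_{\ell+1}$ is an arbitrary seed, but any seed modifies only three multiplicities, so $s_{\ell+1} \leq s(\bfx_\ell) + 3$, giving the clean estimate $s_{\ell+1} \leq \min(4s_1+2,\, s_1 + 3\ell)$. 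In parallel, hypothesis (ii) lets me invoke Lemma~\ref{degree_along_t1} to get $d(\bfx_\ell) \geq \max((\ell-13)/6,\,1)\, d(\bfx_1)$; taking $L(\alpha) \geq 20$ and using $d(\bfx_{\ell+1}) \geq d(\bfx_\ell)$ (degree strictly increases down the tree) produces $\ln d(\bfx_{\ell+1}) - \ln d(\bfx_1) \geq \ln((\ell-13)/6) > 0$.

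\textbf{Reducing to a one-variable comparison.} Introducing $\lambda := \ln(s_{\ell+1}/s_1) \geq 0$, the left-hand side of~\eqref{fundamental_inequality} factors as
\[ (\ln s_{\ell+1})^2 - (\ln s_1)^2 \;=\; \lambda(2\ln s_1 + \lambda), \]
and the claim reduces to $2\lambda \ln s_1 + \lambda^2 \leq \alpha^2 \ln((\ell-13)/6)$, subject to the bound $\lambda \leq \min(\ln(4 + 2/s_1),\, \ln(1 + 3\ell/s_1))$ from the seedbed estimate. I would split into two regimes. When $\ell \leq s_1$, set $u := \ell/s_1 \in (0,1]$ and use $\lambda \leq \ln(1+3u)$: the coefficient of $\ln s_1$ on the left is $2\ln(1+3u) \leq 2\ln 4$, strictly smaller than $\alpha^2$ by the hypothesis on $\alpha$. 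When $\ell > s_1$, use $\lambda \leq \ln(4+2/s_1) \to \ln 4$ together with $\ln((\ell-13)/6) \geq \ln((s_1-13)/6) \sim \ln s_1$, and the same comparison $2\ln 4 < \alpha^2$ closes the argument.

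\textbf{Main obstacle.} The delicate range is $u \simeq 1$, where $\lambda$ reaches its maximum $\approx \ln 4$ and the positive margin $\alpha^2 - 2\ln 4$ shrinks; this is exactly where the quantitative hypothesis $\alpha > \sqrt{2\ln 4}$ is genuinely used. The care required will be to verify the inequality uniformly as $u$ ranges over $(L(\alpha)/s_1,1]$: the function $u \mapsto 2\ln(1+3u)\ln s_1 + (\ln(1+3u))^2 - \alpha^2\ln(us_1/6 - 13/6)$ has a unique interior minimum (by a straightforward sign analysis of its derivative), so I only need to check it is non-positive at $u=1$, where $(\alpha^2 - 2\ln 4)\ln s_1$ dominates the bounded residuals $(\ln 4)^2$ and $\alpha^2\ln 6$, and at $u = L(\alpha)/s_1$, where $-\alpha^2 \ln((L(\alpha)-13)/6)$ is already negative. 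Choosing $L(\alpha) \geq 20$ and $S_0(\alpha)$ large enough (depending on $\alpha - \sqrt{2\ln 4}$) then yields the corollary.
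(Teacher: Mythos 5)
You take a genuinely different route from the paper, although both rest on the same two inputs: Proposition~\ref{s_bound_t1} giving $s_{\ell+1}\leq\min(4s_1+2,\,s_1+3\ell)$, and Lemma~\ref{degree_along_t1} giving $\ln d(\bfx_{\ell+1})-\ln d(\bfx_1)\geq\ln((\ell-13)/6)$. Both proofs then factor the left side of \eqref{fundamental_inequality} as $\lambda(2\ln s_1+\lambda)$ with $\lambda=\ln(s_{\ell+1}/s_1)$. The paper controls this product by inserting a four-term telescoping decomposition of $\ln(s_{\ell+1}s_1)$, engineered so that the cross-term collapses to the single function $f(x)=2\ln(1+3x)\ln(1/x)$ evaluated at $x=\ell/s_1$, which is uniformly bounded on $\R^+$ by a numerical constant $M$; after that, the coefficient of $\delta'$ is $2\ln(4+2/s_1)$ and the choice of $L(\alpha)$ (to absorb constants) then $S_0(\alpha)$ (to push that coefficient below $\alpha^2-\epsilon$) finishes in two lines, with no case distinction and no need to locate extrema. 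Your version instead splits on $\ell\lessgtr s_1$ and verifies the target inequality at the two endpoints of the range of $u=\ell/s_1$, reducing the interior check to a critical-point claim. Both proofs use the hypothesis $\alpha>\sqrt{2\ln 4}$ at exactly the same place (the slack $\alpha^2-2\ln 4$ absorbs the lower-order errors once $s_1$ is large).

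The one soft spot in your argument is the assertion that $g(u)=2\ln(1+3u)\ln s_1+(\ln(1+3u))^2-\alpha^2\ln((us_1-13)/6)$ has a unique interior minimum, established "by a straightforward sign analysis of its derivative.'' Here $g'(u)=A(u)-B(u)$ with $A(u)=6(\ln s_1+\ln(1+3u))/(1+3u)$ and $B(u)=\alpha^2 s_1/(us_1-13)$ \emph{both} decreasing, so monotonicity of $g'$ is not automatic; one must compare the decay rates (for instance, showing $g''$ changes sign at most once) and this is only clean once $s_1\geq S_0(\alpha)$. The conclusion is true, so your proof can be completed, but it is not a one-line derivative check, and the paper's boundedness-of-$f$ device is precisely designed to sidestep having to understand where the extrema of such an expression lie. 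If you expand the critical-point argument with the requisite "for $s_1$ large relative to $\alpha$'' quantifiers, your route is correct; as written it leaves a small gap.
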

 \begin{rem} The inequality $L(\alpha)\geq 5$ is arbitrary, we choose it because it will be used in the proof of Proposition~\ref{upp_b_s}. 
 \end{rem}
\begin{proof} Fix $\alpha>2\sqrt{\ln2}$ and define $\epsilon>0$   by 
$$2\ln(4) +2\epsilon=\alpha^2.$$
Set
$\delta'= \ln d(\bfx_{\ell+1})-\ln d(\bfx_{1}) .$
By Lemma \ref{degree_along_t1},
$$\ln d(\bfx_{\ell+1})\geq \ln d(\bfx_{\ell}) \geq \ln \left( \frac{\ell-13}{6} \right) + \ln d(\bfx_1),$$
so
\begin{equation}\label{eq:estimate_ell_delta'}
\ln \left(\frac{\ell-13}{6} \right) \leq \delta'.
\end{equation}

The increase of the seedbed is captured by the quantity
\begin{align*}
\delta& = (\ln s(\mathbf{x_{\ell+1}}) )^2-(\ln s(\mathbf{x_{1}}) )^2  \\
&=  \ln \left(\frac{s(\mathbf{x_{\ell+1}})}{s(\mathbf{x_{1}})}\right) \times \ln \left(s(\mathbf{x_{\ell+1}}) s(\mathbf{x_{1}}) \right)
\end{align*}
and the goal is to prove that $\delta\leq \alpha^2 \delta'$. We write 
$$
 \ln \left(s(\mathbf{x_{\ell+1}}) s(\mathbf{x_{1}}) \right)=2\ln\left( \frac{\ell-13}6\right)
+ 2\ln  \left(\frac{6\ell}{\ell-13}   \right)
  + 2 \ln \left(\frac{s(\mathbf{x_{1}})}{\ell} \right)
+ \ln\left(  \frac{s(\mathbf{x_{\ell+1}})}{s(\mathbf{x_{1}})} \right).
$$
and obtain
\begin{align*}
\delta & =  2 \ln \left(\frac{s(\mathbf{x_{\ell+1}})}{s(\mathbf{x_{1}})}  \right) \cdot \ln \left(\frac{\ell-13}6 \right)
+ 2 \ln \left(\frac{s(\mathbf{x_{\ell+1}})}{s(\mathbf{x_{1}})}  \right) \cdot \ln \left(\frac{6\ell}{\ell-13} \right)  \\
& \;  + 2 \ln\left( \frac{s(\mathbf{x_{\ell+1}})}{s(\mathbf{x_{1}})}  \right) \cdot \ln \left(\frac{s(\mathbf{x_{1}})}{\ell} \right)
+ \left( \ln \frac{s(\mathbf{x_{\ell+1}})}{s(\mathbf{x_{1}})} \right)^2.
\end{align*}
By Proposition~\ref{s_bound_t1},  
$$s(\bfx_\ell)\leq \min\left(4s(\bfx_1)-1), \, s(\bfx_1)+3(\ell-1) \right), $$
so 
$$\begin{cases}
s(\bfx_{\ell+1})\leq 4s(\bfx_1)+2 \\
s(\bfx_{\ell+1})\leq  s(\bfx_1)+3\ell. 
\end{cases}$$
Since $s(\mathbf{x_{1}})\geq 1$, we have the upper bound $s(\bfx_{\ell+1})/s(\bfx_1)\leq 6$. If $\ell\geq 26$, then $\frac{6\ell}{\ell-13}\leq 12$.
So, with Inequality~\ref{eq:estimate_ell_delta'}, we obtain
$$\delta  \leq 
2 \ln \left(4+\frac{2}{s(\mathbf{x_{1}})}\right) \cdot \delta'
+ 2 \ln 6  \cdot  \ln 12  \\
 \;  + 2 \ln \left(1+\frac{3\ell}{s(\mathbf{x_{1}})}\right)  \cdot  \ln \frac{s(\mathbf{x_{1}})}{\ell}
+ \left( \ln 6 \right)^2.$$
The function $f(x)=2\ln(1+3x)  \cdot \ln(1/x)$ is bounded from above on $\mathbb{R}^+$ by a constant $M<+\infty$ (numerically, $M<1.6$). Hence, if $M'=M+2\ln 6 \cdot \ln 12+(\ln 6)^2$, we obtain
$$\delta  \leq 2 \ln \left(4+\frac{2}{s(\mathbf{x_{1}})}\right)  \cdot \delta' + M'.$$

  Let $L(\alpha)>26$ be such that for all $\ell\geq L(\alpha)$,
$$M'<\epsilon \ln \left(\frac{\ell-13}{6}\right) $$
Then, by the Estimate~\eqref{eq:estimate_ell_delta'}
\begin{equation} \label{ineq1}
\delta \leq 2 \ln \left(4+\frac{2}{s(\mathbf{x_{1}})}\right) \cdot \delta'+\epsilon\times  \delta'.
\end{equation}
We choose now $S_0(\alpha)\geq 1$ such that 
$$2\ln \left(4+\frac{2}{S_0(\alpha)} \right)\leq \epsilon +2\ln 4.$$
Then
$$\delta \leq (2\ln 4 +2\epsilon) \delta',$$
as required. 
\end{proof}

\section{Small, average, and large first multiplicity}

 \vspace{0.2cm}
\begin{center}
\begin{minipage}{12cm}
{\sl{ In this section, we introduce three regimes: small, average, and large first multiplicities. We study how the degree and seedbed vary when a descending lineage stays in one of these regimes. }}
\end{minipage}
\end{center}
\vspace{0.05cm}

\subsection{Definition}\label{par:definition_large_small}

 Consider a homaloidal type $\bfx=(d;m_1, \ldots)$ of degree $d\geq 2$.
Note that the ratio $m_1/d$ is always between $1/3$ and $1$ (see Equation~\eqref{eq:3m1>d}). We say that $ \bfx$
has {\bf large first multiplicity} if
$$m_1/d> 5/6.$$ We say that $\bfx$ has {\bf small first multiplicity} if
$$m_1/d<7/20.$$

Otherwise, the ratio is in $[7/20,5/6]$, 
and we say that $\bfx$ has {\bf average first multiplicity}. Observe that $(2;1,1,1)$ has average first multiplicity and is a common ancestor for all nontrivial homaloidal types.
The goal of this section is to describe different behaviors related to the parent of $\bfx$ according to the size of the first multiplicity.

\begin{rem} The cutoff values $5/6$ and $7/20$ are rather arbitrary choices. See for instance the discussion around the proof of Lemma~\ref{smallmult}. \end{rem}

\subsection{Large first multiplicity}
\begin{lem} \label{largefirst} 
Let $\bfx=(d; \; m_1, \ldots)$ be a homaloidal type of large first multiplicity. Denote by $\bfx'=(d';\; m_1', \ldots)$ its parent and by $(m'_i,m'_j,m'_k)$  the corresponding seed. Then
\begin{itemize}
 \item the seed $(m'_i,m'_j,m'_k)$ is a $*$-seed, and
 \item the first multiplicity of $\bfx'$ is not small (except if $d=2$ and $\bfx'=(1;0)$).
\end{itemize}
\end{lem}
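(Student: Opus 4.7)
The plan is to read off everything directly from the explicit formulas for the parent together with the inequality $m_1+m_2\leq d$ from Lemma~\ref{low_bou_m2m3}. Writing the parent explicitly, the three modified multiplicities are
$$m'_i=d-m_2-m_3,\quad m'_j=d-m_1-m_3,\quad m'_k=d-m_1-m_2,$$
and the new degree is $d'=2d-m_1-m_2-m_3$. The large-multiplicity hypothesis $m_1>5d/6$, combined with $m_1+m_2\leq d$, gives at once $m_2\leq d-m_1<d/6$, hence also $m_3\leq m_2<d/6$, and in particular $m_2+m_3<d/3$.

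For the first assertion, the ordering $m_1\geq m_2\geq m_3$ forces $m'_i\geq m'_j\geq m'_k$, while the bound $m_2+m_3<d/3$ gives $m'_i=d-m_2-m_3>2d/3$. On the other hand, for every index $\ell\geq 4$ the multiplicity $m_\ell$ is carried unchanged into $\bfx'$ and satisfies $m_\ell\leq m_3<d/6$. Therefore $m'_i$ is the strict maximum of the multiset $\{m'_i,m'_j,m'_k,m_4,\dots,m_r\}$ that defines $\bfx'$, and the minimal-index convention on $(i,j,k)$ forces $i=1$, which is by definition a $*$-seed.

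For the second assertion, use $m_1>5d/6$ together with $m_2,m_3\geq 0$ to bound
$$d'=2d-m_1-m_2-m_3<2d-m_1<\frac{7d}{6},$$
and combine with $m'_1=m'_i>2d/3$ proved above to get
$$\frac{m'_1}{d'}>\frac{2d/3}{7d/6}=\frac{4}{7}>\frac{7}{20},$$
so $\bfx'$ does not have small first multiplicity. The edge case $\bfx'=(1;0)$ (i.e.\ $d=2$) cannot occur under our hypothesis: the only child of $(1;0)$ is $(2;1,1,1)$, for which $m_1/d=1/2$, which is not $>5/6$. The only (mild) delicacy in the argument is to make sure the strict inequality $m'_i>m_\ell$ for $\ell\geq 4$ holds so that the minimal-index convention actually yields $i=1$; apart from that, the proof is a direct numerical verification from the parent formulas.
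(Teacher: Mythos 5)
Your proof is correct, and for the first assertion it takes a genuinely different route from the paper's. The paper bounds $m'_i=m_1-\Delta(\bfx)\ge 5d/6-d/3=d/2$ and then argues by contradiction: if $i\neq1$, then $m'_1\ge m'_2\ge m'_i\ge d/2$, so $m'_1+m'_2\ge d>d'$, contradicting $m'_1+m'_2\le d'$ from Lemma~\ref{low_bou_m2m3} applied to $\bfx'$ (this is why the paper first reduces to $d\ge3$, to ensure $d'\ge2$). You instead compute $m'_i=d-m_2-m_3$ exactly, get $m'_i>2d/3$, and compare it directly with every other element of the multiset defining $\bfx'$: the carried-over $m_\ell<d/6$ for $\ell\ge4$, and $m'_j,m'_k\le m'_i$ by the ordering. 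This shows $m_1-\Delta$ is the maximum of the multiset, and the minimal-index convention then forces $i=1$ without ever invoking Lemma~\ref{low_bou_m2m3} for the parent. Your sharper bound $m'_1>2d/3$ (versus the paper's $\ge d/2$) then yields $m'_1/d'>4/7$ in the second assertion, where the paper settles for $\ge 1/2$; both suffice since $7/20<1/2<4/7$. You also correctly observe that the exception $\bfx'=(1;0)$ mentioned in the statement is vacuous under the large-multiplicity hypothesis, since $(2;1,1,1)$ has $m_1/d=1/2$. One minor remark: to conclude $i=1$ you only need $m_1-\Delta$ to be \emph{a} maximum of the multiset (a tie with some $m_\ell$ would still force $i=1$ by the minimal-index convention), so the strictness you flag as a delicacy is comfortable rather than essential; the inequality you truly need, $m_1-\Delta\ge m_\ell$ for $\ell\ge4$, you establish with wide margin.
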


\begin{proof} We may assume $d\geq 3$, so that $d'\geq 2$ too. 
Then, by Lemma~\ref{low_bou_m2m3}, $m_1+m_2\leq d$ and $m_1'+m_2'\leq d'$. Since $m_1>5d/6$, we get $m_2<d/6$ and since $m_3 \leq m_2$, we get $m_3<d/6$. So the degree increase between $\bfx'$ and $\bfx$ is at most 
$$\Delta(\bfx)=m_1+m_2+m_3-d\leq d + d/6 + d/6-d =d/3.$$
This implies that
$$m'_i=m_1-\Delta(\bfx)\geq 5d/6-d/3=d/2.$$
Assume $i\neq 1$, i.e.\  $(m'_i,m'_j,m'_k)$ is not a $*$-seed. In this case $m'_1\geq m'_2\geq m'_i$, so $m'_1+m'_2\geq 2m'_i\geq d > d' $. This contradicts the inequality $m'_1+m'_2\leq d'$, and we conclude that $(m'_i,m'_j,m'_k)$ is indeed a $*$-seed.

Now, the second assertion follows from  
$$\frac{m'_1}{d'}=\frac{m_1-\Delta(\bfx)}{d-\Delta(\bfx)}\geq
\frac{d/2}{d}=\frac{1}{2}.$$

\end{proof}

\subsection{Small first multiplicity}


\begin{lem}  \label{smallmult} 
Let $\bfx=(d;m_1, \ldots)$ be a homaloidal type of small
first multiplicity. Denote by $\bfx'=(d';m_1', \ldots)$ its parent and by $(m'_i,m'_j,m'_k)$ the corresponding seed. Then 
\begin{itemize}
 \item the seed was chosen among the first $11$ multiplicities of $\bfx'$, i.e.\ $k\leq 11$.
\item the multiplicity of $\bfx'$ is not large.
\end{itemize}
\end{lem}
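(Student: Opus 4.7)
The plan is to first pin down exactly which multiplicities of the parent $\bfx'$ are touched by the seed and what the resulting sizes are. Writing the multiplicities of $\bfx$ as the multiset
$$\{m'_i+\nabla,\; m'_j+\nabla,\; m'_k+\nabla\} \cup \{m'_t : t\neq i,j,k\},$$
the admissibility condition~(\ref{admissible}) applied to $\bfx'$ gives $m'_t \leq d'-m'_i-m'_j = m'_k + \nabla$ for every $t \neq i,j,k$. Hence the three updated multiplicities are the three largest values occurring in $\bfx$ (possibly with ties from untouched values at level $m'_k + \nabla$), so $m_1=m'_i+\nabla$, $m_2=m'_j+\nabla$, $m_3=m'_k+\nabla$, and $\nabla = m_1+m_2+m_3 - d$.

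From this identification, the numerical bounds follow easily. Using $m_1 < 7d/20$ and $m_2,m_3\leq m_1$, we get $\nabla \leq 3m_1 - d < d/20$; since $d = d'+\nabla$, this rearranges to $\nabla < d'/19$ (and $d < 20d'/19$). The Noether inequality $m_1+m_2+m_3 \geq d+1$ combined with $m_1,m_2 < 7d/20$ gives $m_3 > 3d/10$, so
$$m'_k = m_3 - \nabla > \frac{3d}{10} - \frac{d}{20} = \frac{d}{4} > \frac{d'}{4}.$$

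For the first bullet, I invoke the first Noether equation $\sum_t m'_t = 3d'-3 < 3d'$ of $\bfx'$: if $N$ denotes the number of indices $t$ with $m'_t \geq m'_k$, then $N\cdot m'_k \leq 3d'-3$, and together with $m'_k > d'/4$ this forces $N < 12$, i.e.\ $N\leq 11$; since $k \leq N$ (it is one of the indices of a value $\geq m'_k$), we conclude $k \leq 11$.

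For the second bullet, I first observe $m'_1 \leq m_1$ in all cases: either $i=1$, giving $m'_1 = m_1 - \nabla < m_1$, or $i\geq 2$, in which case $m'_1$ is an untouched multiplicity of $\bfx$ and so is bounded by its maximum. Thus $m'_1 < 7d/20 = 7(d'+\nabla)/20$. If one assumes $m'_1 > 5d'/6$ for contradiction, this yields $5d'/6 < 7(d'+\nabla)/20$, hence $\nabla > 29d'/21$; but $\nabla < d'/19 < 29d'/21$, a contradiction. The main technical subtlety is the identification of the top three multiplicities of $\bfx$ in terms of $(m'_i,m'_j,m'_k)$ and $\nabla$; once this is in place, everything else is elementary arithmetic, and the cutoffs $7/20$ and $5/6$ are calibrated precisely so that the inequalities $\nabla<d/20$ and $m'_k>d'/4$ close the argument with the constant $11$.
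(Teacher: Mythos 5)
Your proof is correct and follows essentially the same route as the paper: bound $\nabla=\Delta(\bfx)<d/20$ via $m_1<7d/20$, bound $m'_k>d/4$ via the Noether inequality, and then use the first Noether equation to cap $k$; for the second bullet, bound $m'_1$ by $m_1$. One small point in your favor: the paper writes $m'_1=m_1-\Delta(\bfx)$, which tacitly assumes the seed is a $*$-seed (i.e.\ $i=1$); your explicit observation that $m'_1\leq m_1$ in all cases (because if $i\geq 2$ then $m'_1$ survives untouched as a multiplicity of $\bfx$) closes that small gap cleanly.
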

We shall prove a slightly stronger statement. Indeed, assume we replace $7/20$ by $17/50$ in the definition of small first multiplicity; then, $k\leq 11$ could be replaced by $k\leq 9$ in the lemma, and this is optimal. 

\begin{proof} Let $\epsilon$ be a positive real number $\leq 1/20$, and assume that $m_1< (1+\epsilon)d/3$. With $\epsilon=1/20$ the assumption becomes $m_1<7d/20$.

Then, $m_3\leq m_2\leq m_1<(1+\epsilon)d/3$ and 
$$\Delta(\bfx)=m_1+m_2+m_3-d< \epsilon d.$$
However, $m_1+m_2+m_3>d$, so $2m_1+m_3>d$ and then $m_3 > (1-2\epsilon)d/3$ (so, in particular, the 3 first multiplicities must be close to $d/3$). Now 
$$m'_k=m_3-\Delta(\bfx)> (1-2\epsilon)d/3 - \epsilon d= (1-5\epsilon)d/3.$$
By Noether's first equation $\sum_\ell m'_\ell =3d'-3\leq 3d,$ hence the sum over all multiplicities 
$m'_\ell$ from $m'_1$ to $m'_k$ is strictly less that $3d$ but larger than $k\times (1-5\epsilon)d/3$. This gives 
$$ 
k(1-5\epsilon) < 9.
$$ 
With $\epsilon = 1/20$, we obtain $k\leq 11$, and for $\epsilon \leq 1/50 $ we obtain $k\leq 9$.
This proves the first assertion of the lemma. 

For the second assertion, we choose $\epsilon = 1/20$ and estimate $m'_1/d'$: 
$$\frac{m'_1}{d'}=\frac{m_1-\Delta(\bfx)}{d-\Delta(\bfx)}\leq
\frac{21d/60}{d-d/20}=\frac{7}{19}<5/6,$$
so $\bfx'$ does not have large first multiplicity.
\end{proof}

\begin{rem}
 By Lemma~\ref{largefirst} and Lemma~\ref{smallmult}, when we look at a lineage, then the first multiplicity cannot go from large to small or small to large directly without passing through  an average first multiplicity.
 \end{rem}

\subsection{Average first multiplicity}

 This can be considered somewhat the "good" case, where an exponential growth in degree is observed; the precise  result we obtained is summarized by the following proposition.
\begin{pro} \label{average}
There exists $\eta_0>0$ such that for any  homaloidal type
$\bfx$  of average first multiplicity and degree $\geq 2$, we have
$$d(\bfx)\geq (1+\eta_0) d(\bfx')$$
where $\bfx'$ is its parent.
\end{pro}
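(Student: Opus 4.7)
The plan is to reduce the proposition to showing that $\Delta := m_1+m_2+m_3-d = d-d(\bfx')$ is at least a positive fraction of $d$; indeed, $\Delta/d \geq c$ yields $d/d(\bfx') \geq 1/(1-c) = 1+c/(1-c)$, so it suffices to prove $\Delta \geq c\,d$ for an explicit $c>0$ whenever the ratio $t := m_1/d$ lies in $[7/20,5/6]$.

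The main step uses Lemma~\ref{low_bou_m2m3}: the inequality $m_2 \geq (d^2-1-m_1^2)/(3d-3-m_1)$ and, combined with integrality, the bound $m_3 \geq (d-m_1+1)/2$ coming from $m_1+2m_3>d$. Substituting into $\Delta$ and clearing the denominator $2(3d-3-m_1)$, a direct polynomial computation reorganizes the resulting expression into
$$
2(3d-3-m_1)\,\Delta \;\geq\; (d-m_1)(3m_1-d) + 6d - 4m_1 - 5.
$$
The crucial feature is the factor $(d-m_1)(3m_1-d)$, which after the substitution $m_1=td$ becomes $d^2(1-t)(3t-1)$. Dividing by $d$, this gives the lower bound $\Delta/d \geq f(t) - O(1/d)$, with
$$
f(t) \;=\; \frac{(1-t)(3t-1)}{2(3-t)}.
$$

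The final step is that $f$ is non-negative on $[1/3,1]$ and vanishes there only at the two endpoints; hence on the compact subinterval $[7/20,5/6]$ it admits a strictly positive minimum $c_0$ (an explicit computation gives $c_0 = f(7/20) = 13/2120$, since $f$ is easily seen to attain its minimum on this subinterval at the left endpoint). Picking any $c_1 < c_0$, one has $\Delta/d \geq c_1$ for every $\bfx$ of average first multiplicity whose degree exceeds some explicit $d_0$; and for the finitely many homaloidal types of average first multiplicity with degree in $[2,d_0)$, the Noether inequality gives $\Delta \geq 1$ and $d(\bfx') < d_0$, so $d/d(\bfx') \geq 1 + 1/d_0$ uniformly. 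Taking $\eta_0$ to be the minimum of $c_1/(1-c_1)$ and $1/d_0$ produces the desired uniform constant.

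The only mildly delicate point is the algebraic reorganization yielding the factorization $(d-m_1)(3m_1-d)$: once this appears, it is transparent that the thresholds $1/3$ and $1$ are the natural ones, and the choice of cutoffs $7/20$ and $5/6$ matters only through being strictly interior to $(1/3,1)$.
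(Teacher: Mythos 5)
Your proof is correct and follows essentially the same path as the paper's: both use the two inequalities of Lemma~\ref{low_bou_m2m3} to bound $m_2$ and $m_3$, and both arrive at the function $\frac{(1-x)(3x-1)}{2(3-x)}$ (with $x=m_1/d$) as an asymptotic lower bound for $\Delta(\bfx)/d$, whose strict positivity on the compact interval $[7/20,5/6]$ yields the uniform constant, with finitely many low-degree cases absorbed into $\eta_0$. The only cosmetic difference is that you clear denominators to exhibit the factorization $(d-m_1)(3m_1-d)$ at the integer level (and pin down the explicit minimum $f(7/20)=13/2120$), whereas the paper works directly with the ratio $x$ and leaves the lower bound unnamed.
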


\begin{proof}[Proof of Proposition \ref{average}] If we denote by $x$ the ratio $m_1/d$ of the homaloidal type $\bfx=(d;m_1,m_2,m_3, \ldots)$ then by Lemma \ref{low_bou_m2m3}
$$ \frac{\Delta(\bfx)}{d}=\frac{m_1+m_2+m_3-d}{d}\geq x+\frac{1-x^2-1/d^2}{3-x-3/d}+\frac{1-x}2-1,$$
$$ \frac{\Delta(\bfx)}{d}\geq \frac{x-1}2+\frac{1-x^2-1/d^2}{3-x},$$
because $1/(3-x)\leq 1/2$, so
$$ \frac{\Delta(\bfx)}{d}\geq \frac{-3x^2+4x-1}{2(3-x)} -\frac{1}{2d^2},$$
$$ \frac{\Delta(\bfx)}{d}\geq \frac{(1-x)(3x-1)}{2(3-x)} -\frac{1}{2d^2}.$$
Clearly, the map $x\mapsto \frac{(1-x)(3x-1)}{2(3-x)}$  has a strictly positive lower bound on $(7/20,5/6)$, say $\epsilon>0$. For $d$ sufficiently large, $\frac{1}{2d^2}<\epsilon/2$, so 
$$\frac{\Delta(\bfx)}{d}\geq \epsilon/2.$$
Since
$$d(\bfx')=d(\bfx)-\Delta(\bfx)\leq (1-\epsilon/2) d(\bfx).$$
This implies the result provided $d(\bfx)$ is large enough. Now, we can adjust the constant $\eta_0$ to fit the inequality for the  finitely many remaining cases of low degree.
\end{proof}

\subsection{Sequences of small highest multiplicity}

\begin{lem} \label{seqsmall}
Consider a descending lineage $\bfx_1, \ldots,\bfx_N$, all of which have small first multiplicity, and let  $\bfx_0$ be the parent of $\bfx_1$. Then
$$s(\bfx_N)\leq s(\bfx_0)+11.$$
\end{lem}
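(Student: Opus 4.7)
The plan is to prove the stronger statement that, as multisets, the multiplicities at positions $\geq 12$ of $\bfx_n$ coincide with those of $\bfx_0$ for every $0 \leq n \leq N$. Once this is established, the lemma follows immediately: the distinct nonzero multiplicities of $\bfx_N$ split into those appearing at positions $1$--$11$ (contributing at most $11$ distinct values) and those appearing at positions $\geq 12$ (a sub-multiset of $\bfx_0$'s multiplicities, hence contributing at most $s(\bfx_0)$ distinct nonzero values), which gives $s(\bfx_N) \leq s(\bfx_0) + 11$.

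To prove the multiset stability I will induct on $n$. Fix $n \geq 1$ and consider the transition $\bfx_{n-1} \to \bfx_n$, realized by a seed $(m^{(n-1)}_i, m^{(n-1)}_j, m^{(n-1)}_k)$ with $\nabla := d^{(n-1)} - m^{(n-1)}_i - m^{(n-1)}_j - m^{(n-1)}_k > 0$. Since $\bfx_n$ has small first multiplicity by hypothesis, Lemma~\ref{smallmult} forces $k \leq 11$, so the three seed positions all lie in the head of $\bfx_{n-1}$. The admissibility condition~\eqref{admissible} then gives $m^{(n-1)}_k + \nabla \geq m^{(n-1)}_t$ for every $t \neq i, j, k$, so each of the three new values $m^{(n-1)}_i + \nabla \geq m^{(n-1)}_j + \nabla \geq m^{(n-1)}_k + \nabla$ dominates every surviving multiplicity. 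They therefore occupy positions $1, 2, 3$ of $\bfx_n$ after reordering, while positions $\geq 4$ of $\bfx_n$ form precisely the multiset $\{m^{(n-1)}_t : t \neq i, j, k\}$ arranged in decreasing order.

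Because $i, j, k \leq 11$, no multiplicity originally at a position $\geq 12$ of $\bfx_{n-1}$ has been removed. The remaining $8$ head multiplicities of $\bfx_{n-1}$ are each $\geq m^{(n-1)}_{11} \geq m^{(n-1)}_{12}$, while every tail multiplicity is $\leq m^{(n-1)}_{12}$; a short bookkeeping argument then identifies the top $8$ of the reduced multiset with these $8$ head survivors, so the multiset at positions $\geq 12$ of $\bfx_n$ equals that of $\bfx_{n-1}$. Iterating from $n = 1$ to $n = N$ closes the induction. The one point demanding a little care, and the mildest possible obstacle, is a potential tie between $m^{(n-1)}_{11}$ and $m^{(n-1)}_{12}$, which could shuffle positional labels; this is neutralized by working with multisets throughout, since any decreasing reordering yields the same tail multiset.
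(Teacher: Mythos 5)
Your proof is correct and follows essentially the same route as the paper's: invoke Lemma~\ref{smallmult} to confine every seed to the first eleven positions, deduce that the multiset of multiplicities at positions $\geq 12$ is preserved along the lineage, and conclude that $s(\bfx_N)$ can exceed $s(\bfx_0)$ by at most the eleven head positions. The paper states this in a single terse sentence; you have simply supplied the bookkeeping (the three new values sit on top, the eight surviving head values dominate the tail, and ties at the $11$--$12$ boundary are harmless once one compares multisets rather than positions), which is a faithful expansion of the same idea.
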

\begin{proof} By Lemma \ref{smallmult}, for $n=0, \ldots, N-1$ the seed of $\bfx_n$ used to construct $\bfx_{n+1}$ is always made of multiplicities taken among the eleven first. Thus the multiplicities $m_k$ with $k\geq 12$  are the same all along the lineage and the seedbed may only vary by separating values which were equal between the first $12$ ones. Hence the bound.
\end{proof}

\subsection{Suitable bound on seedbed increase}

 The  following proposition will be useful later to deduce from an additive inequality  $s(\bfx_\ell)\leq s(\bfx_1)+C$, like in  Lemma \ref{seqsmall}, an inequality of the form (\ref{fundamental_inequality}).

\begin{pro}  \label{fundamental_inequality2}
Given $\alpha>\sqrt{2\ln 4}$, there exists $S_1(\alpha)>0$ with the following property.
Let $\bfx_1, \ldots,\bfx_\ell$ ($\ell\geq2$) be a descending lineage such that
\begin{itemize}
\item $s(\bfx_1)\geq S_1(\alpha)$,
\item $\bfx_\ell$ has average  first multiplicity,
\item $s(\bfx_\ell)\leq s(\bfx_1)+3L(\alpha)$, where $L(\alpha)$ was given in Corollary~\ref{ineq_t1}.
\end{itemize}
Then
$$(\ln s(\mathbf{x_{\ell}}) )^2-(\ln s(\mathbf{x_{1}}) )^2\leq \alpha^2 \left( \ln d(\bfx_{\ell})-\ln d(\bfx_{1}) \right).$$
\end{pro}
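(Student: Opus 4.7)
The strategy is elementary: under the hypotheses, the right-hand side is bounded below by a positive constant depending only on $\alpha$ (coming from Proposition~\ref{average}), while the left-hand side can be made arbitrarily small by choosing $s(\bfx_1)$ large. The point is that an additive bound on $s$ translates into a tiny multiplicative change of $\ln s$ whenever $s$ is already large, and this is precisely what makes $(\ln s)^2$ increase only slightly.

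\textbf{First step: lower bound on the right-hand side.} Since the lineage is descending, degrees are strictly increasing, so in particular $d(\bfx_{\ell-1})\geq d(\bfx_1)$. Because $\bfx_\ell$ has average first multiplicity and $d(\bfx_\ell)\geq 2$ (this follows from $s(\bfx_1)\geq 1$ forcing $\bfx_1\neq (1;0)$, hence $d(\bfx_1)\geq 2$, and $\ell\geq 2$), Proposition~\ref{average} applied to $\bfx_\ell$ gives $d(\bfx_\ell)\geq (1+\eta_0)d(\bfx_{\ell-1})\geq (1+\eta_0)d(\bfx_1)$. Therefore
$$\ln d(\bfx_\ell)-\ln d(\bfx_1)\geq \ln(1+\eta_0),$$
and the right-hand side is at least $\alpha^2\ln(1+\eta_0)$, a strictly positive quantity depending only on $\alpha$.

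\textbf{Second step: upper bound on the left-hand side.} Set $a=s(\bfx_1)$ and $b=s(\bfx_\ell)$, and write $C=3L(\alpha)$. If $b\leq a$ then $(\ln b)^2-(\ln a)^2\leq 0$ and there is nothing to prove. Otherwise $a<b\leq a+C$, and using the factorisation
$$(\ln b)^2-(\ln a)^2=(\ln b-\ln a)(\ln b+\ln a)\leq \ln\!\left(1+\frac{C}{a}\right)\cdot 2\ln(a+C)\leq \frac{2C\ln(a+C)}{a},$$
where I used $\ln(1+x)\leq x$. The last expression depends only on $a$ and $\alpha$, and it tends to $0$ as $a\to+\infty$.

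\textbf{Third step: choice of $S_1(\alpha)$.} Since $\frac{2C\ln(a+C)}{a}\to 0$ as $a\to+\infty$, one can choose $S_1(\alpha)$ so large that
$$\frac{6L(\alpha)\,\ln\bigl(S_1(\alpha)+3L(\alpha)\bigr)}{S_1(\alpha)}\;\leq\;\alpha^2\ln(1+\eta_0).$$
Then for any admissible lineage with $s(\bfx_1)\geq S_1(\alpha)$, the second step gives
$$(\ln s(\bfx_\ell))^2-(\ln s(\bfx_1))^2\leq \alpha^2\ln(1+\eta_0)\leq \alpha^2\bigl(\ln d(\bfx_\ell)-\ln d(\bfx_1)\bigr),$$
which is the desired inequality. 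There is no real obstacle here; the only ingredient beyond elementary estimates is Proposition~\ref{average}, which provides the uniform multiplicative gap in degree when landing on a homaloidal type of average first multiplicity.
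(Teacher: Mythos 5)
Your proof is correct and follows essentially the same route as the paper: bound $\ln d(\bfx_\ell)-\ln d(\bfx_1)$ from below by $\ln(1+\eta_0)$ via Proposition~\ref{average} applied at the last step, then use the difference-of-squares factorisation and the additive seedbed bound to show the left-hand side tends to $0$ as $s(\bfx_1)\to\infty$, and choose $S_1(\alpha)$ accordingly. The only cosmetic difference is that you bound $\ln s(\bfx_\ell)+\ln s(\bfx_1)\leq 2\ln(s(\bfx_1)+3L(\alpha))$ directly, whereas the paper separates out $2\ln s(\bfx_1)+\ln(s(\bfx_\ell)/s(\bfx_1))$; the resulting estimates are equivalent up to constants and both yield a function of $s(\bfx_1)$ vanishing at infinity.
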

\begin{proof} This is similar to the proof of Corollary~\ref{ineq_t1}, but much cruder. Consider
\begin{align}
\delta' & =\ln d(\bfx_\ell)-\ln d(\bfx_1) \\
& \geq \ln d(\bfx_{\ell})-\ln d(\bfx_{\ell-1}).
\end{align}
Since $\bfx_{\ell}$ has average first multiplicity, Proposition \ref{average} tells us that this difference is bounded from below $\ln(1+\eta_0)$, hence
$$\delta' \geq \ln(1+\eta_0).$$
Now we set $\delta=(\ln s(\mathbf{x_{\ell}}) )^2-(\ln s(\mathbf{x_{1}}) )^2$ and we wish to prove that $\delta\leq \alpha^2 \ln(1+\eta_0)$, since then we can conclude that $\delta\leq \alpha^2 \delta'$. For this, write
\begin{align*}
\delta& = (\ln s(\mathbf{x_{\ell}}) )^2-(\ln s(\mathbf{x_{1}}) )^2  \\
&=  \ln \left(\frac{s(\mathbf{x_{\ell}})}{s(\mathbf{x_{1}})}\right) \cdot \ln \left(s(\mathbf{x_{\ell}}) s(\mathbf{x_{1}})\right)  \\
& =  2 \ln \left( \frac{s(\mathbf{x_{\ell}})}{s(\mathbf{x_{1}})} \right) \cdot \ln s(\mathbf{x_{1}})
+\left( \ln \left(\frac{s(\mathbf{x_{\ell}})}{s(\mathbf{x_{1}})} \right)\right)^2 \\
& \leq 2 \ln \left(1+\frac{3L(\alpha)}{s(\mathbf{x_{1}})} \right) \cdot  \ln s(\mathbf{x_{1}}) + \left(\ln \left(1+\frac{3L(\alpha)}{s(\mathbf{x_{1}})}\right)\right)^2 \\
& \leq 6L(\alpha)\frac{\ln s(\mathbf{x_{1}})}{s(\mathbf{x_{1}})} + \left(\frac{3L(\alpha)}{s(\mathbf{x_{1}})}\right)^2,
\end{align*}
where we used $\ln(1+x)\leq x$. Now, the function 
$$f(x)=6L(\alpha)\frac{ \ln x}{x}+ \left(\frac{3L(\alpha)}{x}\right)^2,$$ 
goes to $0$ when $x$ goes to $+\infty$, so there exists an integer $S_1(\alpha)\geq 1$ such that $f(x)< \alpha^2 \ln(1+\eta_0)$ when $x\geq S_1(\alpha)$. With such a choice, 
$$\delta < \alpha^2 \ln(1+\eta_0),$$
provided $s(\bfx_1)\geq S_1(\alpha)$, as required.
\end{proof}

\section{Bound on the length}

 \vspace{0.2cm}
\begin{center}
\begin{minipage}{12cm}
{\sl{ In this section, we  prove the upper bounds in Theorems \ref{lengthestimate} and~\ref{mainthm}. Altogether, this will conclude the proof of Theorem~B.}}
\end{minipage}
\end{center}
\vspace{0.2cm}

\subsection{Upper estimate on the seedbed growth} The next proposition recalls the first assertion of Theorem \ref{lengthestimate}.

\begin{pro} \label{upp_b_s}
For all $\alpha>\sqrt{2\ln (4)}$, there exists $C_\alpha>0$ such that for every homaloidal type $\bfx$, we have
$s(\bfx)\leq C_\alpha \exp(\alpha \sqrt{\ln d(\bfx)}).$
\end{pro}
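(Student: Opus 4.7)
The plan is to reduce the statement to the equivalent fundamental inequality $(\ln s(\bfx))^2 \leq \alpha^2 \ln d(\bfx) + K_\alpha$ for some constant $K_\alpha$ depending only on $\alpha$; exponentiating and using sub-additivity of $\sqrt{\cdot}$ then gives $s(\bfx) \leq e^{\sqrt{K_\alpha}} \exp(\alpha \sqrt{\ln d(\bfx)})$. To establish this, I walk down the descending lineage $\bfx_1 = (1;0), \bfx_2, \ldots, \bfx_T = \bfx$ and telescope the segment-wise fundamental inequality $(\ln s_{\text{end}})^2 - (\ln s_{\text{start}})^2 \leq \alpha^2 (\ln d_{\text{end}} - \ln d_{\text{start}})$ that is furnished on the one hand by Corollary~\ref{ineq_t1} (for $*$-excursions) and on the other by Proposition~\ref{fundamental_inequality2} (for short excursions that end in the average regime).

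Fix $\alpha > \sqrt{2 \ln 4}$; let $S_0, L$ be the constants of Corollary~\ref{ineq_t1} and $S_1$ that of Proposition~\ref{fundamental_inequality2}, and put $S^\ast = \max(S_0, S_1) + 3$. We may assume $s(\bfx) > S^\ast$, since otherwise the result is trivial after enlarging $C_\alpha$. Let $q$ be the largest index along the lineage with $s(\bfx_q) \leq S^\ast$; since the seedbed varies by at most $3$ per step, $s(\bfx_{q+1}) \leq S^\ast + 3$, and on the sub-lineage $\bfx_{q+1}, \ldots, \bfx_T$ every type has seedbed exceeding $S^\ast$. By Lemmas~\ref{largefirst} and~\ref{smallmult}, the regimes $L$ and $S$ can never be directly adjacent, so every maximal $L$-segment and every maximal $S$-segment is flanked by $A$-segments. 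I partition the sub-lineage into super-segments, each ending at a type of average first multiplicity, of three shapes: a maximal $L$-segment with its terminating $A$-step, a maximal $S$-segment with its terminating $A$-step, or a single $A$-step. For a long $L$-super-segment (length at least $L$), the starting type is not large, all interior seeds are $*$-seeds by Lemma~\ref{largefirst}, and the exit seed is arbitrary, so Corollary~\ref{ineq_t1} delivers the fundamental inequality. For the remaining super-segment types, Lemma~\ref{seqsmall} (in the $S$ case) together with the trivial bound of $3$ per step (in the short $L$ and single $A$ cases) yields a total seedbed increase at most $3L$, which is exactly the hypothesis required by Proposition~\ref{fundamental_inequality2}; the average-first-multiplicity condition at the endpoint is built into the construction.

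Telescoping across all super-segments gives $(\ln s(\bfx))^2 \leq (\ln s(\bfx_{q+1}))^2 + \alpha^2 (\ln d(\bfx) - \ln d(\bfx_{q+1}))$; combining with $s(\bfx_{q+1}) \leq S^\ast + 3$ and $d(\bfx_{q+1}) \geq 1$ produces the fundamental inequality with $K_\alpha = (\ln(S^\ast + 3))^2$. The main obstacle is the bookkeeping around the very first super-segment: the index $q+1$ need not sit at an $A$-type, so a short initial adjustment is required, typically by absorbing the initial non-$A$ portion back into $\bfx_q$ at the cost of an additional bounded constant in $K_\alpha$. Conceptually, the factor $4$ appearing in Proposition~\ref{s_bound_t1} is what dictates $\sqrt{2 \ln 4}$ as the threshold below which the telescoping method breaks, since that factor governs the worst-case growth $\ln s_{\ell+1} - \ln s_1 \lesssim \ln 4$ absorbed at each $*$-excursion.
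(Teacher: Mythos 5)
Your proposal follows the same strategy as the paper's proof: partition the lineage into segments delimited by types of average first multiplicity, establish the segment-wise inequality $(\ln s_{\text{end}})^2-(\ln s_{\text{start}})^2\leq \alpha^2(\ln d_{\text{end}}-\ln d_{\text{start}})$ via Corollary~\ref{ineq_t1} (for long $L$-excursions) and Proposition~\ref{fundamental_inequality2} (for short or $S$-excursions), and telescope. Your way of locating the base point is actually somewhat slicker than the paper's: by taking $q$ to be the largest index in the \emph{whole} lineage with $s(\bfx_q)\leq S^\ast$, you get the lower bound $s(\bfx_n)>S^\ast\geq\max(S_0,S_1)$ for free on every type that follows, whereas the paper defines $k$ inside the subsequence of average types, needs the ``no big jumps'' bound $s(\bfy_{n+1})\leq 4s(\bfy_n)+14$ (Step~2) together with a bootstrap argument (Step~3) to recover the same lower bound at the anchor $\bfy_k$, and therefore has to use the larger threshold $S_2(\alpha)=4\max(S_0,S_1)+14$.

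However, there is a genuine gap at the \emph{other} end of the telescope: your super-segments all terminate at an average-first-multiplicity type, so the telescoped inequality ends at the last average type $\bfy_p$ in the lineage, not at $\bfx$ itself. You implicitly assume $\bfy_p=\bfx$, i.e.\ that $\bfx$ has average first multiplicity, but this is not true for a general homaloidal type. You flag the bookkeeping around $\bfx_{q+1}$ at the start of the telescope, but not the analogous issue at the end. The paper deals with this head-on in Step~1 of its proof, by a preliminary reduction: if $\bfx=(d;m_1,\dots)$ is not of average first multiplicity (or its parent seed is a $*$-seed), replace it by its child $\bfx'=(2d;d,d,d,m_1,\dots)$ obtained from the non-$*$-seed $(0,0,0)$; then $m_1(\bfx')/d(\bfx')=1/2$, so $\bfx'$ is of average first multiplicity, $s(\bfx)=s(\bfx')-1$, and $d(\bfx')=2d(\bfx)$ only shifts $\alpha\sqrt{\ln d}$ by an $O(1)$ amount, so the bound transfers back at the cost of a constant factor. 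Without this reduction (or a separate analysis of a final non-average tail, which is doable but delicate because Proposition~\ref{fundamental_inequality2} requires its endpoint to be average), your telescope bounds $s(\bfy_p)$ rather than $s(\bfx)$, and you have not shown how to get from one to the other. That missing step is what Step~1 of the paper's proof supplies.
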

\begin{proof} 

Fix $\alpha>2\sqrt{\ln 2}$, and set 
$$
S_2(\alpha)=4\max(S_0(\alpha), S_1(\alpha))+14,
$$
where $S_0(\alpha)$ and $S_1(\alpha)$ are given by Corollary~\ref{ineq_t1} and Proposition \ref{fundamental_inequality2} respectively.  We will always require $C_\alpha\geq S_2(\alpha)$.

\vspace{0.1cm}

\noindent{\bf{Step 1. Preliminary reductions--}} Let $\bfx$ be a homaloidal type of degree $\geq 2$. 
If $s(\bfx)<S_2(\alpha)$, we are done, so {\sl{we can assume that $s(\bfx)\geq S_2(\alpha)$}}.
We can also {\sl{assume that $\bfx$ has average first multiplicity and that the seed  corresponding to $p(\bfx)$ is not a $*$-seed}}. Indeed, if it is not the case, we choose the seed $(0,0,0)$ of $\bfx=(d;m_1,m_2, \ldots)$ and consider the corresponding child $\bfx'=(2d;d,d,d,m_1,m_2, \ldots).$
If we manage to prove that 
$$s(\bfx')\leq C'_\alpha \exp \left(\alpha \sqrt{\ln d(\bfx')} \right)$$
for some $C'_\alpha>0$ that depends only on $\alpha$, then
$$s(\bfx)=s(\bfx')-1\leq C'_\alpha \exp \left( \alpha \sqrt{\ln (2d(\bfx))} \right).$$
Then, $d\geq 2$ implies successively that $\ln(d)>0.6$ and 
$$\sqrt{\ln(d)+\ln (2)}\leq \sqrt{\ln(d)}+\ln(2),$$ 
thus we obtain
$$s(\bfx)\leq (C'_\alpha e^{\alpha \ln2}) \exp(\alpha \sqrt{\ln d(\bfx)}),$$
which proves the proposition with $C_\alpha=C'_\alpha e^{\alpha \ln2}$.

\vspace{0.1cm}

\noindent{\bf{Step 2.--}} According to the first step, we can now assume that $\bfx$ has average first multiplicity, the seed of $p(\bfx)$ is not a $*$-seed, and $s(\bfx)\geq S_2(\alpha)$. Let's consider the whole descending  lineage from $\bfx_1=(2;1,1,1)$ up to $\bfx_N=\bfx$, and  let's cut this sequence as follows.

We extract from $\bfx_{1}, \ldots,\bfx_N$ the subsequence $(\mathbf{y}_n)_{1\leq n\leq p}$ made  of  the ancestors of $\bfx$ with  average first multiplicities. Hence $p\geq 2$ since $\bfy_1=(2;1,1,1)$ and $\bfy_p=\bfx$.  We define $k\leq p$ to be the smallest index of this sequence such that, for all $n$ between $k+1$ and $q$, we have
$$s(\bfy_n)\geq S_2(\alpha).$$ 
By definition,
$$s(\bfy_k)< S_2(\alpha).$$ 

 Our goal in this second step is to show that there is no big jump in the sequence $s(\bfy_n)$, more precisely, for $n=1, \ldots,p-1$,
\begin{equation} \label{no_big_jumps}
s(\bfy_{n+1})\leq 4s(\bfy_{n})+14.
\end{equation}

 Let's look at the lineage between $\bfy_n$ and $\bfy_{n+1}$, say $\bfx_q=\bfy_n, \ldots,\bfx_{q+\ell}=\bfy_{n+1}$.
Since the sequence $\bfx_n$ cannot jump from large to small first multiplicity or vice-versa in one birth, by definition of the sequence $\bfy_n$ as the subsequence of average first multiplicities, there are only three cases to consider:
\begin{itemize}
\item $\ell=1$, that is $\bfy_{n+1}$ is the children of $\bfy_{n}$. In this case,
$$s(\bfy_{n+1})\leq s(\bfy_{n})+3.$$
\item  $\bfx_{q+1}, \ldots,\bfx_{q+\ell-1}$ all have small first multiplicity. By Lemma \ref{seqsmall}, we have
$$s(\bfx_{q+\ell-1})\leq s(\bfy_{n})+11,$$
so 
$$s(\bfy_{n+1})\leq s(\bfy_{n})+14.$$
\item  $\bfx_{q+1}, \ldots,\bfx_{q+\ell-1}$ all have large first multiplicity. Then by Lemma \ref{largefirst}, $\bfy_n=\bfx_{q}$, $\ldots$, $\bfx_{q+\ell-1}$ must be a lineage entirely made of $*$-births, so by Proposition \ref{s_bound_t1},
$$s(\bfx_{q+\ell-1})\leq 4s(\bfy_{k})-1;$$
thus,
$$s(\bfy_{n+1})\leq 4s(\bfy_{n})+2,$$
which again implies Inequality (\ref{no_big_jumps}).
\end{itemize}

\vspace{0.1cm}

\noindent{\bf{Step 3.--}} It follows from Inequality (\ref{no_big_jumps}) and the definition of the index $k$ that 
$$4s(\bfy_k)+14\geq s(\bfy_{k+1})\geq S_2(\alpha)=4\max(S_0(\alpha),S_1(\alpha))+14,$$
so $s(\bfy_k)\geq \max(S_0(\alpha),S_1(\alpha))$. Then, this inequality remains valid for all indices $n>k$:
$$\forall n\geq k, \, s(\bfy_n)\geq \max(S_0(\alpha),S_1(\alpha)).$$
This lower bound will be required to apply Corollary~\ref{ineq_t1} or Proposition \ref{fundamental_inequality2}.

\vspace{0.1cm}

 We now prove that for $n=k, k+1, \ldots,p-1$, we have the inequality
\begin{equation} \label{desired_ineq}
(\ln s(\bfy_{n+1}))^2-(\ln s(\bfy_{n}))^2\leq \alpha^2 \left(\ln d(\bfy_{n+1})-\ln d(\bfy_{n})\right).
\end{equation}
For this, we look at the lineage between $\bfy_n$ and $\bfy_{n+1}$, namely 
 $\bfx_q=\bfy_n$, $\ldots$, $\bfx_{q+\ell}=\bfy_{n+1}.$ So, $\ell$ is the number of births from $\bfy_n$ to $\bfy_{n+1}$ in Hudson's tree.
  
 If  $\bfy_n$ and $\bfy_{n+1}$ are parent and child, or if the intermediate lineage consists of homaloidal types all of small multiplicity, as we observed before,
$$s(\bfy_{n+1})\leq s(\bfy_{n})+14\leq s(\bfy_{n})+3L(\alpha),$$
because $L(\alpha)\geq 5$ by definition. Therefore, Proposition \ref{fundamental_inequality2} applies and inequality (\ref{desired_ineq}) follows.

 If the intermediate lineage consists of homaloidal types all of large first multiplicity, we distinguish between the two cases $\ell \leq L(\alpha)$ and $\ell> L(\alpha)$. In the first case, since $s(\bfy_{n+1})\leq s(\bfy_{n})+3\ell\leq s(\bfy_{n})+3L(\alpha)$, we may apply Proposition \ref{fundamental_inequality2} to get Inequality \eqref{desired_ineq}. In the second case, Corollary~\ref{ineq_t1} applies, and again \eqref{desired_ineq} is satisfied.
 
\vspace{0.1cm}

\noindent{\bf{Step 4. Conclusion--}} Thus, summing the Inequalities \eqref{desired_ineq} from $n=k$ to $p-1$, we obtain
$$\left( \ln s(\bfx))^2-(\ln s(\bfy_{k}) \right)^2\leq \alpha^2 \left(\ln d(\bfx)-\ln d(\bfy_{k})\right).$$
So, since by definition of $k$, $s(\bfy_k)< S_2(\alpha),$ 
$$(\ln s(\bfx))^2 \leq \alpha^2 \ln d(\bfx) +(\ln s(\bfy_{k}))^2.$$
This gives
\begin{align*}
\ln s(\bfx) & \leq \sqrt{\alpha^2 \ln d(\bfx) + (\ln S_2(\alpha))^2} \\
& \leq \alpha \sqrt{\ln d(\bfx)} + \ln S_2(\alpha),
\end{align*}
because $\sqrt{a+b}\leq \sqrt{a}+\sqrt{b}$ for all $a,b\geq 0$.
Thus $ s(\bfx) \leq S_2(\alpha)\exp(\alpha \sqrt{\ln d(\bfx)})$, as desired.
\end{proof}

\subsection{Upper estimate on $N_d$} We can now prove the upper bound on $N_d$ from Theorem~\ref{mainthm}.

\vspace{0.1cm}

 Let $d\geq 1$ be a  large integer, and let $X_d$ be the set of (proper) homaloidal types of degree $d$. 
 Let $\alpha>2\sqrt{\ln 2}$,  and $\sigma$ the smallest integer $>C_\alpha e^{\alpha\sqrt{\ln d}}$, where $C_\alpha$ is the constant in Proposition~\ref{upp_b_s}.
We assume that $d$ is large enough to satisfy $\sigma<d$.

 By Proposition \ref{upp_b_s}, if  $\bfx \in X_d$ then $s(\bfx)\leq \sigma$, thus $\bfx$ can be written as a series of blocks
$$\bfx=(d;(\mu_1)^{\nu_1}, \ldots,(\mu_\sigma)^{\nu_\sigma}),$$
where  blocks of zeros may be added at the end.
We consider the map $\pi:X_d \to \N^\sigma$ defined by 
$$\pi\colon (d;(\mu_1)^{\nu_1}, \ldots,(\mu_\sigma)^{\nu_\sigma})\mapsto (p_1, \ldots,p_\sigma)=(\mu_1\nu_1, \ldots,\mu_\sigma \nu_\sigma).$$

 By the first Noether equation, the image of $\pi$ is a subset of the set of non-negative integers $(p_1, \ldots,p_\sigma)$ such that $\sum_{i=1}^\sigma p_i=3d-3$. Therefore, the image of $\pi$ is in a set of cardinality
$$\binom{\sigma-1+3d-3}{\sigma-1}\leq (4d)^\sigma.$$
We can bound the number of elements of each fiber of $\pi$: given $p_1, \ldots,p_\sigma$ in $\N$, then if $p_i=0$ we can take $\mu_i=0$ too, and if $p_i\neq 0$, $p_i=\mu_i\nu_i$ is a decomposition of the integer $p_i\leq 3d$ into the product of two integers $\geq 1$; there are at most $3d$ such decompositions for each $i$, thus,
$$|\pi^{-1}(p_1, \ldots,p_\sigma)|\leq (3d)^{\sigma}.$$
Thus $N_d=|X_d|  \leq  (3d)^{\sigma} (4d)^\sigma$, hence $\ln N_d  \leq  \sigma \ln(12d^2)$, and then 
\begin{align*}
\ln \ln N_d & \leq \ln \sigma + \ln \ln (12d^2),\\
& \leq \ln (C_\alpha+1) +\alpha \sqrt{\ln d} + \ln \ln (12d^2)
\end{align*}
Dividing by $\sqrt{\ln(d)}$, this gives 
$$\frac{\ln \ln N_d}{\sqrt{\ln d}}  \leq \alpha + o(1)$$
when $d\to+\infty$, as was to be proved.

\subsection{Upper estimate on $N_{\leq d}$} 
The upper bound on $N_{\leq d}$ from Theorem~\ref{mainthm} is deduced as follows from the upper bound on $N_d$. Let $\alpha>2\sqrt{\ln(2)}$, choose a smaller exponent $\alpha'$ such that $\alpha>\alpha'>2\sqrt{\ln(2)}$. Then for $d$ large enough, say $d\geq d_0$, we have
$$N_d\leq \exp(\exp(\alpha'\sqrt{\ln d})).$$
Assume $d$ is large enough so that for the finitely many $d'\leq d_0$, we have
$$N_{d'} \leq \exp(\exp(\alpha'\sqrt{\ln d})).$$
Thus 
$$N_{\leq d}=\sum_{d'=1}^d N_d\leq d\exp(\exp(\alpha'\sqrt{\ln d})).$$
so, since $\alpha'<\alpha$,
$$N_{\leq d}=o(\exp(\exp(\alpha\sqrt{\ln d})).$$

\section{Questions}\label{par:complements}


\subsection{Monotonicity} We don't know whether $(N_d)$ is increasing (in the range $1\leq d\leq 249$, it is). 
Theorem~\ref{ThA} shows that closely related examples provide oscillating sequences. Similar oscillations occur for finitely generated groups  (see~\cite{Kassabov-Pak}). 

\subsection{Finitely generated subgroups of $\Bir(\bbP^2_\bfk)$} Let $G$ be a subgroup of $\Bir(\bbP^2_\bfk)$. For $d\geq 1$, denote by $c_G(d)$ the number of components of $\Bir(\bbP^2_\bfk)_d$ containing at least one element of $G$; thus, $c_G(d)\leq N_d$ with equality when $G$ is equal to $\Bir(\bbP^2_\bfk)$, or when ${\mathrm{char}}(\bfk)=0$ and $G$ contains $\Bir(\bbP^2_\Q)$ (this follows from Hudson's algorithm). On the other hand, we do not know whether a finitely generated subgroup of $\Bir(\bbP^2_\bfk)$ can visit all components of $\Bir(\bbP^2_\bfk)$. 
As a more general problem, one can ask for a description of the possible growth rates of $(c_G(d))_{d\geq 1}$ as $G$ varies among all (resp.\ all finitely generated) subgroups of $\Bir(\bbP^2_\bfk)$ or when $G$ is some explicit subgroups, for instance the group of monomial transformations or the subgroup of $\Bir(\bbP^2_{ {\mathbf{F}}_p})$ generated by the standard quadratic involution and $\PGL_3({\mathbf{F}}_p)$.

\subsection{The minimal and maximal dimensions} Let  $\bfx=(d; m_1, \ldots, m_r)$ be a homaloidal type and denote by $r(\bfx)=r$ the number of non-zero multiplicities of $\bfx$ (with repetition). The dimension of the irreducible component  $I_x$ of $\Bir(\bbP^2_\bfk)$ corresponding to $\bfx$ is equal to 
\begin{equation}
\dim(\bfx)=8+2r(x)
\end{equation}
because the general point of $I_x$ is determined by its $r(\bfx)$ base points up to post composition by an element of $\PGL_3(\bfk)$ and $\dim(\PGL_3)=8$. The component of maximal dimension is unique, it corresponds to the Jonqui\'eres type $(d; d-1, 1^{2d-2})$, and its dimension is $6+4d$.
The minimal possible dimension is obtained with $r(\bfx)=9$ and is equal to $26$. Using Halphen surfaces and their automorphisms, it is easy to show that this minimal dimension occurs for a subset of $\N$ of positive density; in fact, numerical simulations suggest that it occurs for all large enough $d$. 
It would be great to have a description of the asymptotic shape of the discrete curve 
\begin{equation}
D(m)={\mathrm{card}}\{ \bfx\; ; \; \bfx \; {\text{is a homaloidal type of degree}} \; d \; {\text{with }} \dim(\bfx)=m\}.
\end{equation}

\subsection{Densities} Figure~\ref{fig:density} suggests that, for a homaloidal type of degree $d$ taken with probability $1/N_d$, the vector $(x,y,z)=\frac{1}{d}(m_1,m_2,m_3)$ determines a random variable in $\R^3$ which, for $d$ large, equidistributes towards a probability measure~$\mu$. It would be great to find the exact asymptotic of $N_d$ and then show that such a measure exists.

\subsection{Other varieties} We don't know what is the growth rate for the number of irreducible components $N_d^{\bbP^m}$ of $\Bir(\bbP^m_\bfk)_d$. We don't know if there is a variety $X$ for which $N_{\leq d}^X\simeq \ln(\ln(d))$ or another one for which $N_{\leq d}^X\simeq \exp(d)$.

\bibliographystyle{plain}
\bibliography{references}

\begin{thebibliography}{1}

\bibitem{ac:book}
Maria Alberich-Carrami\~nana.
\newblock {\em Geometry of the plane {C}remona maps}, volume 1769 of {\em
  Lecture Notes in Mathematics}.
\newblock Springer-Verlag, Berlin, 2002.

\bibitem{Blanc-Furter}
J\'er\'emy Blanc and Jean-Philippe Furter.
\newblock Length in the {C}remona group.
\newblock {\em Ann. H. Lebesgue}, 2:187--257, 2019.

\bibitem{deleglise-hernane-nicolas}
M.~Del\'eglise, M.~O. Hernane, and J.-L. Nicolas.
\newblock Grandes valeurs et nombres champions de la fonction arithm\'etique de
  {K}alm\'ar.
\newblock {\em J. Number Theory}, 128(6):1676--1716, 2008.

\bibitem{friedland-milnor}
Shmuel Friedland and John Milnor.
\newblock Dynamical properties of plane polynomial automorphisms.
\newblock {\em Ergodic Theory Dynam. Systems}, 9(1):67--99, 1989.

\bibitem{Karatsuba:Survey}
A.~A. Karatsuba.
\newblock A {H}ilbert-{K}amke problem in analytic number theory.
\newblock {\em Mat. Zametki}, 41(2):272--284, 288, 1987.

\bibitem{Kassabov-Pak}
Martin Kassabov and Igor Pak.
\newblock Groups of oscillating intermediate growth.
\newblock {\em Ann. of Math. (2)}, 177(3):1113--1145, 2013.

\end{thebibliography}
\vfill
\pagebreak
\end{document}